\newtheorem{theorem}{Theorem}[section]
\newtheorem{lemma}[theorem]{\bf Lemma}
\newtheorem{corollary}[theorem]{Corollary}
\newtheorem{prop}[theorem]{Proposition}
\newtheorem{definition}[theorem]{Definition}
\theoremstyle{remark}
\newtheorem{remark}[theorem]{\textbf{Remark}}
\newtheorem{example}[theorem]{Example}
\numberwithin{equation}{section}
\newcommand{\R}{\mathbb R}
\newcommand{\Rn}{{{\mathbb R}^n}}
\newcommand{\N}{\mathbb N}
\newcommand{\Z}{\mathbb Z}
\newcommand{\A}{\mathcal A}
\newcommand{\al}{\alpha}
\newcommand{\bxi}{\boldsymbol{\xi}}
\newcommand{\ep}{\varepsilon}
\newcommand{\bU}{\mathbf U}
\newcommand{\bh}{\mathbf h}
\newcommand{\bg}{\mathbf g}
\newcommand{\bw}{\mathbf w}
\newcommand{\bv}{\mathbf v}
\newcommand{\bk}{\mathbf k}
\newcommand{\ba}{\mathbf a}
\newcommand{\vf}{\mathbf f}
\def\H{{\mathscr H}}
\def\W{{\mathscr W}}
\DeclareMathOperator{\supp}{supp}
\DeclareMathOperator{\dist}{dist}
\DeclareMathOperator*{\essinf}{ess\,inf}
\DeclareMathOperator*{\esssup}{ess\,sup}
\newcommand{\tr}{\mathrm{tr}}
\newcommand{\T}{\mathrm{t}}
\newcommand{\op}{\mathrm{op}}
\DeclareMathOperator{\la}{\langle}
\DeclareMathOperator{\ra}{\rangle}
\newcommand{\grad}{\nabla}
\DeclareMathOperator{\Div}{div}
\DeclareMathOperator{\osc}{osc}
\def\Q{{\mathcal Q}}
\def\M{{\mathcal M}}
\def\Lap{{\mathscr L}}
\def\Sp{{\mathcal S}}
\def\Xint#1{\mathchoice
   {\XXint\displaystyle\textstyle{#1}}%
   {\XXint\textstyle\scriptstyle{#1}}%
   {\XXint\scriptstyle\scriptscriptstyle{#1}}%
   {\XXint\scriptscriptstyle\scriptscriptstyle{#1}}%
   \!\int}
\def\XXint#1#2#3{{\setbox0=\hbox{$#1{#2#3}{\int}$}
     \vcenter{\hbox{$#2#3$}}\kern-.5\wd0}}
\def\avgint{\Xint-}
\def\dashint{\Xint-}
\def\ep{{\epsilon}}
\def\loc{\text{loc}}
\def\al{{\alpha}}
\def\W{\mathscr W}
\def\vp{\varphi}
\begin{document}

\title[Matrix $\A_p$ weights for degenerate Sobolev spaces]
{Matrix $\A_p$ weights, Degenerate Sobolev spaces, and mappings of finite distortion}

\author{David Cruz-Uribe, SFO}
\address{David Cruz-Uribe, SFO\\
Dept. of Mathematics \\ Trinity College \\
Hartford, CT 06106-3100, USA} 
\email{david.cruzuribe@trincoll.edu}

\author{Kabe Moen}
\address{Kabe Moen \\ 
Department of Mathematics \\
 University of Alabama \\
 Tuscaloosa, AL 35487, USA}
\email{kabe.moen@ua.edu}

\author{Scott Rodney}
\address{Scott Rodney \\
Department of Mathematics, Physics and Geology \\
 Cape Breton University\\
Sydney, NS B1P6L2, Canada}
\email{scott\_rodney@cbu.ca}

\thanks{The first author is supported by the Stewart-Dowart faculty
  development fund at Trinity College and NSF Grant 1362425.  The second author is supported
  by NSF Grant 1201504.  The third author is supported by the NSERC
  Discovery Grant program.}

\subjclass{30C65,35B65,35J70,42B35,42B37,46E35}

\keywords{matrix $\A_p$, degenerate Sobolev spaces, mappings of finite
distortion} 

\date{May 2, 2015}

\begin{abstract}
We study  degenerate Sobolev spaces where the degeneracy is controlled
by a matrix $\A_p$ weight.  This class of  weights was introduced by Nazarov, Treil
and Volberg, and degenerate Sobolev spaces with matrix weights have been considered by
several authors for their applications to PDEs.  We prove that the
classical Meyers-Serrin theorem, $H=W$, holds in this setting.  As
applications we prove partial regularity results for weak solutions of
degenerate $p$-Laplacian equations, and in particular for mappings of
finite distortion.
\end{abstract}

\maketitle

\section{Introduction}
\label{section:introduction}

In this paper we study matrix $\A_p$ weights and their application to
PDEs and mappings of finite distortion.  Scalar Muckenhoupt $A_p$
weights have a long history: they were introduced in the 1970s and are
central to the study of weighted norm inequalities in harmonic
analysis.  They have extensive applications in PDEs and other areas.
(For details and further references,
see~\cite{duoandikoetxea01,garcia-cuerva-rubiodefrancia85,MR2463316}.)
Matrix $\A_p$ weights are more recent.  They were introduced by
Nazarov, Treil and Volberg~\cite{MR1428988,MR1428818,MR1423034} and
arose from problems in stationary processes and operator theory.  A
matrix weight $W(x)$ is a $d\times d$ semi-definite matrix of
measurable functions.  It is used to define a weighted $L^p$ norm on vector-valued functions:
\[ \|\vf\|_{L^p_W} = \left(\int_{\R^n}
  |W^{1/p}(x)\vf(x)|^p\,dx\right)^{1/p}.  \]
The matrix $\A_p$ condition is a natural generalization of the scalar
Muckenhoupt $A_p$ condition and matrix $\A_p$ weights also share many other analogous
properties of their scalar counterparts.  For instance, the Hilbert transform is bounded on
$L^p_W(\R)$ if and only if $W\in \A_p$.  Since their introduction
these weights have been considered by a number of authors: see, for
instance, \cite{2014arXiv1402.3886B,MR1857041,MR1813604,MR2104276,
  MR2015733, 2014arXiv1401.6570I,MR2354705,MR2670156, MR1928089}.

In this paper we apply the theory of matrix $\A_p$ weights to the
study of degenerate Sobolev spaces. More precisely, we consider the space $\W^{1,p}_W$
that consists of all functions in $W^{1,1}_\loc$ such that
\[ \|f\|_{\W^{1,p}_W} = \|f\|_{L^p(v)} + \|\grad f\|_{L^p_W}<\infty. \]
(The weight $v$ could in principle be arbitrary, but we will show that
there exist scalar weights naturally associated with each matrix weight.)
Such weighted Sobolev spaces are well known to play an important role
in the study of degenerate elliptic equations:  see
\cite{ChuaRodneyWheeden, MR1207810,MR2574880,turesson00}.   Our main result 
extends the celebrated $H=W$ theorem of Meyers and
Serrin~\cite{MR0164252} to Sobolev spaces $\W^{1,p}_W(\Omega)$:  we
will show that if $W\in \A_p$, then smooth functions are dense in $\W^{1,p}_W(\Omega)$.  

We give two applications of our results.  First, we use them to
prove partial regularity results for the degenerate
$p$-Laplacian, 
\[ \Lap_{A,p} u = \Div(\langle A\grad u,\grad
u\rangle^{\frac{p-2}{2}}A\grad u) = 0, \]
where $A$ is an $n\times n$ degenerate elliptic matrix.    These
results extend the work of the first two authors and
Naibo~\cite{MR3011287}; in particular, assuming the matrix $\A_p$
condition allows us to significantly weaken other hypotheses.    Second,
we apply these results for the degenerate
$p$-Laplacian to the
problem of partial regularity of mappings of finite distortion.  Conditions guaranteeing the continuity of such mappings
have been studied by many authors:  see~\cite{MR1241287, MR1833892,
  MR2053566, MR1294334, MR0414869}.  Our results approach the
regularity problem from a significantly different direction.  More precisely, we
characterize the set of continuity of the mapping in terms of a
maximal operator defined using its related inner and outer distortion
functions.

The remainder of this paper is organized as follows.  In
Section~\ref{section:weights} we gather some preliminary material
about scalar weights, particularly the Muckenhoupt $A_p$ weights.
There is a close relationship between scalar $A_p$ and matrix $\A_p$
and the scalar weights play a significant role in our work.  In
Section~\ref{section:matrix} we define matrix weighted spaces and
give some basic results.  None of these ideas are new, but we have
put them a consistent framework and we give proofs for several
results that are only implicit in the literature.  

In Section~\ref{section:matrix-Ap} we define matrix $\A_p$ weights and
prove a number of new results, particularly for matrix $A_1$.  The
central theorem  is that approximate identities converge in $L^p_W$,
$1\leq p < \infty$.  We prove this without using the Hardy-Littlewood
maximal operator, replacing it with a smaller averaging operator.
This fact plays an important role in the proof of our main result, but
it is of independent interest and should be useful in other settings.

In
Section~\ref{section:H=W} we prove our main result, the
generalization of the Meyers-Serrin $H=W$ theorem to matrix weighted Sobolev
spaces.   We prove several variations that correspond to well-known
results in the scalar (unweighted) case.  

The last three sections are applications.  In
Sections~\ref{section:degenerate} and~\ref{section:balance} we apply
our results to degenerate $p$-Laplacian equations. In Section~\ref{section:degenerate} we reformulate and extend the results
in~\cite{MR3011287} without using the matrix $\A_p$
condition and instead give our hypotheses in terms of scalar weights.
In Section~\ref{section:balance} we show that the matrix $\A_p$ condition
yields a number of corollaries.  Finally, in
Section~\ref{section:finite-distortion} we apply these results to
prove partial regularity results for mappings of finite distortion.
All of our results are based on assuming that the distortion tensor
satisfies a matrix $\A_p$ condition.  

Throughout this paper we will use the following notation.  The symbol
$n$ will always denote the dimension of the Euclidean space $\R^n$.
We will use $d$ to denote the dimension of matrix and vector-valued
functions.  In general $d$ can be any positive value, though in
applications we will take $d=n$.  
We will take the domain of our  functions to
be an open, connected set $\Omega\subset \R^n$.  The set $\Omega$ need
not, {\em a priori}, be bounded.  Given two values $A$ and $B$, we will write $A\lesssim
B$ if there exists a constant $c$ such that $A\leq cB$.  We write
$A\approx B$ if $A\lesssim B$ and $B\lesssim A$.   Constants
$C$, $c$, etc., whether explicit or implicit, can change value at each
appearance.  Sometimes we will indicate the parameters constants
depend on by writing, for instance, $C(n,p)$, etc.   If the dependence
is not indicated, the constant may depend on the dimension and other
parameters that should be clear from context.  

\section{Scalar weights}
\label{section:weights}

 In this section we
gather together, without proof, some  basic 
definitions and results about scalar $A_p$ weights.  Unless otherwise noted, these results can be found
in~\cite{duoandikoetxea01,garcia-cuerva-rubiodefrancia85}. 

Given a domain $\Omega \subset \R^n$, we define a (scalar) weight $w$ to be
non-negative function in $L^1_\loc(\Omega)$.      The measure $w\,dx$
is a Borel measure and we define the weighted $L^p$ space,
$L^p(w,\Omega)$, to be the Banach function space with norm
\[ \|f\|_{L^p(w,\Omega)} = \left(\int_{\Rn} |f(x)|^p w(x)\,dx\right)^{1/p}. \]
Given a set $E$, let
\[ w(E) = \int_E w(x)\,dx,  \qquad \avgint_E w(x)\,dx = \frac{1}{|E|}\int_E w(x)\,dx. \]

A weight $w$ is doubling if given any cube $Q$, $w(2Q)\leq Cw(Q)$,
where $2Q$ is the cube with the same center as $Q$ and
$\ell(2Q)=2\ell(Q)$.

For $1<p<\infty$, we
say that $w\in A_p(\Omega)$ if
\[ [w]_{A_p(\Omega)}=\sup_Q\left(\ \dashint_{Q\cap\Omega} w(x)\,dx\right)\left(\ \dashint_{Q\cap\Omega}
  w^{-p'/p}(x)\,dx\right)^{p/p'}<\infty, \]
where the supremum is taken over all cubes $Q$.  
When $p=1$, we say $w\in A_1$ if for all cubes $Q$,
\[ \avgint_{Q\cap\Omega} w(y)\,dy \leq [w]_{A_1} \essinf_{x\in Q\cap \Omega} w(x). \]

\begin{remark} \label{remark:Ap-domain} Alternatively, we can define
  the doubling and $A_p$ conditions with respect to balls instead of
  cubes.  If $\Omega=\R^n$, these two definitions are clearly
  equivalent; similarly, they are equivalent if $w$ is the restriction
  to $\Omega$ of a doubling or an $A_p$ weight defined on all of
  $\R^n$.  However, depending on the geometry of $\Omega$ and its
  boundary these two definitions may not be equivalent.  (For a
  characterization of the restriction problem for $A_p$ weights,
  see~\cite[Chapter~IV.5]{garcia-cuerva-rubiodefrancia85}.)
  Hereafter, given a domain $\Omega$ we will assume that our weights
  are defined on some unspecified set $\Omega'$ such that
  $\Omega \Subset \Omega'$ and we assume that balls and cubes are
  interchangeable in the definition of doubling or $A_p$ on $\Omega$.  Moreover,
  for simplicity, we will write $A_p$ instead of $A_p(\Omega)$: again,
  the precise domain will be implicit.
\end{remark}

Define the class $A_\infty$ by
\[ A_\infty = \bigcup_{p\geq 1} A_p. \]
If $w\in A_p\subset A_\infty$, then for every cube $Q$ and measurable set
$E\subset Q$,
\begin{equation} \label{eqn:Ainfty-Ap}
 \frac{|E|}{|Q|} \leq [w]_{A_p}^{1/p}
\left(\frac{w(E)}{w(Q)}\right)^{1/p}. 
\end{equation}

\medskip

A weight $w$ satisfies the reverse H\"older
condition for some $s>1$, denoted by $w\in RH_s$, if 
\[ [w]_{RH_s} = \sup_Q \left(\avgint_Q
  w(x)^s\,dx\right)^{1/s}\left(\avgint_Q w(x)\,dx\right)^{-1} < \infty.  \]
We say that $w\in RH_\infty$ if for all cubes $Q$,
\[ \esssup_{x\in Q} w(x) \leq [w]_{RH_\infty} \avgint_Q w(y)\,dy. \]
Given a weight $w$,  $w\in A_p$ for some $p$ if and only if
$w\in RH_s$ for some $s$: i.e.,
\[ \bigcup_{1 \leq p<\infty} A_p = A_\infty = \bigcup_{1<s\leq \infty} RH_s. \] 
The reverse H\"older condition yields an estimate that is analogous
to~\eqref{eqn:Ainfty-Ap}, exchanging the roles of Lebesgue measure and
the measure $w\,dx$:  if $w\in RH_s$, then for every cube $Q$ and $E\subset Q$,
\begin{equation} \label{eqn:Ainfty-RHs}
 \frac{w(E)}{w(Q)} \leq [w]_{RH_s}
\left(\frac{|E|}{|Q|}\right)^{1/s'}. 
\end{equation}

Below we will need a sharp estimate for the reverse
H\"older exponent.  The following result is taken from Hyt\"onen and
P\'erez~\cite{HP}.  If $w\in A_\infty$, it satisfies the Fujii-Wilson
condition
\[ [w]_{A_\infty} = \sup_Q \frac{1}{w(Q)}\int_Q M(w\chi_Q)(x)\,dx <
\infty, \]
where $M$ is the Hardy-Littlewood maximal operator,
\[ Mf(x) = \sup_Q \avgint_Q |f(y)|\,dy \cdot \chi_Q(x). \]
Then we have that $w\in RH_s$ with $[w]_{RH_s}\leq 2$, where
\begin{equation} \label{eqn:sharp-RH}
s = 1 + \frac{1}{2^{n+11}[w]_{A_\infty}}. 
\end{equation}

\section{Matrix weighted spaces}
\label{section:matrix}

In this section we define matrix weights and matrix weighted spaces,
and prove some basic properties.    Recall that the symbol $d$ denotes the dimension
of vector functions and matrices:  in other words, we will consider
vector-valued functions 
functions
$\vf : \Omega \rightarrow \R^d$, with
\[ \vf(x) = \big( f_1(x),\ldots,f_d(x)\big), \]
and matrices $A(x) = (a_{ij}(x))_{i,j=1}^d$.   By $D
\vf$ we mean the $n\times d$ matrix $(\partial_i f_j)$.  

Given a vector $\bv=(v_1,\ldots,v_d)$, recall the vector $\ell^p$
norms, $1\leq p<\infty$,
\[ |\bv|_p = \left(\sum_{i=1}^d |v_i|^p\right)^{1/p}, \]
and let $|\bv|_\infty = \max(|v_1|,\ldots,|v_d|)$.  When $p=2$ we will
often write $|\bv|=|\bv|_2$.  We will frequently use the fact that
given $1\leq p<q\leq \infty$,
\[ |\bv|_q \leq |\bv|_p \leq d^{1/p} |\bv|_\infty \leq d^{1/p}|\bv|_q. \]

Let $\M_d$ denote the collection of all real-valued, $d\times d$
matrices.  The norm of a matrix is the operator
norm:
\[ |A|_{\op} = \sup_{\substack{\bv \in \R^d\\|\bv|=1\;\;}} |A\bv|.  \]
A matrix function is  a map $W:\Omega\rightarrow \M_d$; we say that it is
measurable if each component of $W$ is a
measurable function. 

Let $\Sp_d$ denote the collection of all those $A\in \M_d$ that are
self-adjoint and positive semi-definite. 
 If $A\in \Sp_d$, then it has $d$
non-negative eigenvalues, $\lambda_i$, $1\leq i \leq d$, 
and we have that 
\[ |A|_{\op} = \max_i \lambda_i \leq \tr A \leq d|A|_{\op}. \]
Moreover, there exists an orthogonal matrix $U$
such that $U^{\T}AU$ is diagonal.   We denote a diagonal matrix by
$D(\lambda_1,\ldots,\lambda_d)=D(\lambda_i)$.   If
$W$ is a measurable matrix function  with values in $\Sp_d$, then we can
choose the matrices $U(x)$ to be measurable:  the following result is
from~\cite[Lemma~2.3.5]{MR1350650}

\begin{lemma} \label{lemma:diagonal} Given a matrix function
  $W : \Omega \rightarrow \Sp_d$, there exists a $d\times d$ measurable matrix
  function $U$ defined on $\Omega$ such that $U^{\T}(x)W(x)U(x)$ is
  diagonal.
\end{lemma}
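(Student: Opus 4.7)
The plan is to prove the lemma by induction on the dimension $d$, where the heart of the matter is a measurable selection of eigenvectors at each step. Before setting up the induction, I would first verify that the ordered eigenvalues $\lambda_1(x)\geq \lambda_2(x)\geq\cdots\geq\lambda_d(x)\geq 0$ of $W(x)$ are measurable functions of $x$. This is standard: they are the ordered roots of the characteristic polynomial $p_x(t)=\det(W(x)-tI)$, whose coefficients are polynomials in the measurable entries $w_{ij}(x)$, and the $k$-th largest root of a monic polynomial depends continuously (hence Borel-measurably) on its coefficients.

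For the induction itself, the case $d=1$ is trivial with $U\equiv 1$. For the inductive step, assume the result in dimension $d-1$. The idea is to produce the first column of $U$, namely a measurable unit eigenvector $\bu_1(x)$ corresponding to $\lambda_1(x)$, and then reduce to a $(d-1)\times(d-1)$ problem on the orthogonal complement. The set-valued map
\[
x \mapsto E_1(x) := \{\bv \in \R^d : |\bv|=1,\ W(x)\bv=\lambda_1(x)\bv\}
\]
takes nonempty compact values, and since $W$ and $\lambda_1$ are measurable, the preimage of any open subset of $S^{d-1}$ under this correspondence is measurable. A measurable selection theorem (for instance, Kuratowski--Ryll-Nardzewski) then furnishes a measurable $\bu_1:\Omega\to S^{d-1}$ with $\bu_1(x)\in E_1(x)$.

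Having chosen $\bu_1$ measurably, I would complete it to a measurable orthonormal frame by applying Gram--Schmidt to $\bu_1$ together with the standard basis vectors of $\R^d$, handling the collinearity loci by a finite measurable case split (there are only finitely many patterns of which standard basis vectors are skipped). In the resulting frame, $W(x)$ is block diagonal with a $1\times 1$ block $\lambda_1(x)$ and a $(d-1)\times(d-1)$ symmetric positive semi-definite block $\widetilde W(x)$ whose entries are measurable in $x$; apply the inductive hypothesis to $\widetilde W$ and reassemble to produce the desired measurable orthogonal $U(x)$. The main obstacle is exactly the measurable choice of eigenvector at points where $\lambda_1(x)$ is a repeated eigenvalue: on the multiplicity locus no closed-form expression for a distinguished eigenvector exists, which is why a measurable selection theorem is the right tool rather than a direct algebraic formula via spectral projections.
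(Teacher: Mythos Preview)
The paper does not give a proof of this lemma at all: it simply states the result and cites it as \cite[Lemma~2.3.5]{MR1350650}. So there is nothing to compare your argument against at the level of technique.

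Your proposal is a sound and standard route to the result. The measurability of the ordered eigenvalues is correct for the reason you give. For the measurable selection of a unit eigenvector of $\lambda_1(x)$, the claim that the correspondence $x\mapsto E_1(x)$ is weakly measurable deserves one more sentence: with $g(x,\bv)=|W(x)\bv-\lambda_1(x)\bv|$ Carath\'eodory on $\Omega\times S^{d-1}$, one has $\{x:E_1(x)\cap V\neq\emptyset\}=\bigcup_n\{x:\min_{\bv\in K_n}g(x,\bv)=0\}$ for any open $V=\bigcup_n K_n$ with $K_n$ compact, and each minimum is measurable as an infimum over a countable dense set. With that, Kuratowski--Ryll-Nardzewski applies. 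The Gram--Schmidt step and the block reduction to $\widetilde W(x)$ are fine; the entries of $\widetilde W$ are measurable since they are inner products of measurable vector functions, and $W$ preserves $\bu_1(x)^\perp$ by self-adjointness, so the block structure is genuine. The ``finite measurable case split'' for Gram--Schmidt is adequate because among $\bu_1,e_1,\dots,e_d$ exactly one vector is discarded, and the set where $e_j$ is the discarded one is cut out by measurable rank conditions.

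In short: your argument is correct and self-contained, whereas the paper outsources the proof to the cited reference.
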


If $A\in \Sp_d$ is diagonalized by an
orthogonal matrix $U$ and has eigenvalues $\lambda_i$, for every $s>0$
define  $A^s = U D(\lambda_i^s)U^{\T}$.  By Lemma~\ref{lemma:diagonal}
we have that given any matrix function $W: \Omega\rightarrow \Sp_d$,
$W^s$ is a measurable matrix function.   For a fixed matrix function
$W$ we will always implicitly assume that all of its powers are defined using
the same orthogonal matrix $U$.  Furthermore, if it is the case that $A$ is positive definite we can also define negative powers of $A$ through the orthogonal matrix $U$.  Indeed, a simple calculation shows that $A^{-1}=UD(\lambda_i^{-1})U^t$ and for $s>0$ we set $A^{-s} = UD(\lambda_i^{-s})U^t$.\\

By a matrix weight we mean a matrix function $W : \Omega \rightarrow
\Sp_d$ such that $|W|_{\op} \in
L^1_\loc(\Omega)$.   Equivalently, we may assume that each eigenvalue
$\lambda_i \in L^1_\loc(\Omega)$, $1\leq i \leq d$.   
We say that $W$
is an invertible matrix weight if $W$ is positive definite a.e.:
equivalently, that $\det W(x) \neq 0$ a.e.~and so $W^{-1}$ exists.
Hereafter, if $W$ is a matrix weight, we define 
$v(x) = |W(x)|_\op$; if it is also invertible, we will always let
$w(x) = |W^{-1}(x)|_\op^{-1}$.

\begin{prop} \label{prop:elliptic}
Given an invertible matrix weight $W$, we have
$0<w(x)\leq v(x)<\infty$ for a.e. $x\in \Omega$.  Furthermore,
$W$ satisfies a
two weight, degenerate ellipticity condition: for all $\bxi \in \R^d$,
\begin{equation} \label{eqn:elliptic}
 w(x)|\bxi|^p\leq |W^{1/p}(x)\bxi|^p\leq v(x)|\bxi|^p. 
\end{equation}
\end{prop}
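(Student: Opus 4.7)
The plan is to reduce everything to the pointwise spectral decomposition of $W(x)$. By Lemma~\ref{lemma:diagonal}, at a.e.\ $x \in \Omega$ we may write $W(x) = U(x) D(\lambda_i(x)) U^{\T}(x)$ with $U(x)$ orthogonal and $\lambda_i(x)$ the eigenvalues of $W(x)$. Since $W(x) \in \Sp_d$ the $\lambda_i(x)$ are nonnegative, and invertibility forces $\lambda_i(x) > 0$ a.e.; consequently $W^{-1}(x) = U(x) D(\lambda_i^{-1}(x)) U^{\T}(x)$. Using that $|A|_{\op} = \max_i \lambda_i$ for any $A \in \Sp_d$ with eigenvalues $\lambda_i$, I would then identify
\[ v(x) = |W(x)|_{\op} = \max_i \lambda_i(x), \qquad w(x) = |W^{-1}(x)|_{\op}^{-1} = \min_i \lambda_i(x). \]

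For the first conclusion, $v(x) < \infty$ a.e.\ is just the assumption that $|W|_{\op} \in L^1_{\loc}$, while $w(x) > 0$ a.e.\ follows from strict positivity of the $\lambda_i(x)$; the inequality $w(x) \leq v(x)$ is the trivial bound $\min_i \lambda_i \leq \max_i \lambda_i$. For the ellipticity estimate, I would diagonalize $W^{1/p}(x) = U(x) D(\lambda_i^{1/p}(x)) U^{\T}(x)$ and, given $\bxi \in \R^d$, introduce $\bv = U^{\T}(x)\bxi$. Since $U(x)$ is orthogonal, $|\bv| = |\bxi|$ and
\[ |W^{1/p}(x)\bxi|^2 = |D(\lambda_i^{1/p}(x))\bv|^2 = \sum_{i=1}^d \lambda_i(x)^{2/p}|v_i|^2. \]
Estimating the right-hand side between $(\min_i \lambda_i(x))^{2/p}|\bv|^2$ and $(\max_i \lambda_i(x))^{2/p}|\bv|^2$ and raising to the $p/2$ power gives exactly~\eqref{eqn:elliptic}.

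There is no real obstacle here; the only point worth flagging is that the entire argument is carried out pointwise a.e., and the measurability issues involved in choosing $U(x)$ (and hence powers $W^s(x)$) are supplied by Lemma~\ref{lemma:diagonal} together with the convention recorded immediately after it.
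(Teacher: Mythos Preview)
Your argument is correct. The paper's proof is somewhat different in style, however: rather than diagonalizing and working with the explicit eigenvalue identification $v(x)=\max_i\lambda_i(x)$, $w(x)=\min_i\lambda_i(x)$, the authors argue purely via the submultiplicativity of the operator norm. They obtain $w(x)\leq v(x)$ from $1=|I|_{\op}\leq |W(x)|_{\op}|W^{-1}(x)|_{\op}$, the upper bound in~\eqref{eqn:elliptic} from $|W^{1/p}\bxi|^p\leq |W^{1/p}|_{\op}^p|\bxi|^p=v|\bxi|^p$, and the lower bound by writing $|\bxi|^p=|W^{-1/p}W^{1/p}\bxi|^p\leq |W^{-1}|_{\op}|W^{1/p}\bxi|^p$. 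Your spectral approach is equally valid and perhaps more transparent about why $v$ and $w$ are the ``right'' scalar weights; the paper's approach has the minor advantage of not needing to invoke Lemma~\ref{lemma:diagonal} at all.
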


\begin{proof}
First note that for a.e.~$x\in \Omega$,
$ 1 = |I|_\op \leq |W(x)|_\op |W^{-1}(x)|_\op$.
Since $W$ is a matrix weight, $v\in L^1_\loc(\Omega)$; since it is
invertible, its eigenvalues are positive a.e.  Hence, we must have
that $0<w(x)\leq v(x)<\infty$. 

To prove the ellipticity conditions, we use the definition of matrix norm.
The second inequality follows from it immediately:
\[ |W^{1/p}(x)\bxi|^p \leq |W^{1/p}(x)|_\op^p|\bxi|^p =
v(x)|\bxi|^p. \]
The first follows similarly:
$$|\bxi|^p=|W^{-1/p}(x)W^{1/p}(x)\bxi|^p\leq |W^{-1}(x)|_{\op}\, |W^{1/p}(x)\bxi|^p.$$
\end{proof}

\begin{remark} \label{remark:upper-bd}
Note that if $W$ is any matrix weight, the second inequality,
\[ |W^{1/p}(x)\bxi|^p\leq v(x)|\bxi|^p, \]
 still holds.
\end{remark}

\medskip

Given $p$, $1\leq p<\infty$, and a matrix weight $W : \Omega
\rightarrow \Sp_d$,  define the weighted space $L^p_W(\Omega)$ to be the set of all measurable,
vector valued functions $\vf : \Omega \rightarrow \R^d$ such that 
\[ \|\vf\|_{L^p_W(\Omega)} = \left(\int_\Omega
  |W^{1/p}(x)\vf(x)|^p\,dx\right)^{1/p} < \infty. \]
In this space, we identify two functions $\vf,\bg$ as equivalent if $\|\vf-\bg\|_{L^p_W(\Omega)}=0$.  In the special case when $p=2$, it is often useful to restate this norm in terms of the
inner product on $\R^d$:
\[ \|\vf\|_{L^2_W(\Omega)} = \left(\int_\Omega
  \la W(x)\vf(x), \vf(x)\ra \,dx\right)^{1/2}. \]
The following lemma is proved in~\cite{MR2906551,MR2574880}.

\begin{lemma}  \label{lemma:banach}
 Given  $1\leq p<\infty$ and a matrix weight $W:\Omega\rightarrow \Sp_d$, the
  space $L^p_W(\Omega)$ is a Banach space.
\end{lemma}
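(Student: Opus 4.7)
The plan is to verify the norm axioms first and then prove completeness. The seminorm axioms are routine: homogeneity is immediate from linearity of $W^{1/p}(x)$, and for the triangle inequality I would combine the pointwise Euclidean bound $|W^{1/p}(x)(\vf+\bg)(x)| \leq |W^{1/p}(x)\vf(x)| + |W^{1/p}(x)\bg(x)|$ with Minkowski's inequality in unweighted $L^p(\Omega)$. Positivity then follows precisely because we identify functions whose seminorm vanishes, so $\|\cdot\|_{L^p_W(\Omega)}$ descends to a genuine norm on the quotient.

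For completeness, given a Cauchy sequence $\{\vf_k\} \subset L^p_W(\Omega)$, I would introduce the companion sequence $\bh_k := W^{1/p}\vf_k$. Since
\[ \|\bh_j - \bh_k\|_{L^p(\Omega;\R^d)} = \|\vf_j - \vf_k\|_{L^p_W(\Omega)}, \]
the sequence $\{\bh_k\}$ is Cauchy in the standard vector-valued Lebesgue space $L^p(\Omega;\R^d)$, which is complete, so $\bh_k \to \bh$ for some $\bh \in L^p(\Omega;\R^d)$. The remaining task is to produce a measurable $\vf : \Omega \rightarrow \R^d$ with $W^{1/p}\vf = \bh$ almost everywhere, since such a $\vf$ automatically lies in $L^p_W(\Omega)$ and satisfies $\|\vf_k - \vf\|_{L^p_W} = \|\bh_k - \bh\|_{L^p} \to 0$.

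The main obstacle is that $W$ may fail to be invertible on a set of positive measure, so one cannot simply set $\vf = W^{-1/p}\bh$. To handle this I would invoke Lemma~\ref{lemma:diagonal} to fix a measurable orthogonal $U$ with $U^{\T} W U = D(\lambda_1,\ldots,\lambda_d)$ and measurable eigenvalues $\lambda_i \geq 0$, so that $W^{1/p} = U D(\lambda_i^{1/p}) U^{\T}$. Since $U(x)^{\T}$ is a pointwise isometry of $\R^d$, we have $U^{\T}\bh_k \to U^{\T}\bh$ in $L^p$. For each $k$ and each $i$, the identity $U^{\T}\bh_k = D(\lambda_i^{1/p}) U^{\T}\vf_k$ forces $(U^{\T}\bh_k)_i = 0$ on the set $E_i := \{\lambda_i = 0\}$, and passing to the $L^p$ limit yields $(U^{\T}\bh)_i = 0$ almost everywhere on $E_i$. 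I would then define $\tilde{\vf}$ componentwise by $\tilde{\vf}_i := \lambda_i^{-1/p}(U^{\T}\bh)_i$ on $\Omega \setminus E_i$ and $\tilde{\vf}_i := 0$ on $E_i$ (measurable, since $\lambda_i$ is measurable and strictly positive off $E_i$), and set $\vf := U\tilde{\vf}$. A direct computation in the diagonalizing basis shows $W^{1/p}\vf = U D(\lambda_i^{1/p}) \tilde{\vf} = \bh$ almost everywhere, which closes the argument.
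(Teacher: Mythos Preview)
Your argument is correct. The norm axioms are handled cleanly, and the completeness argument via the isometry $\vf \mapsto W^{1/p}\vf$ into $L^p(\Omega;\R^d)$ is the natural route; your use of Lemma~\ref{lemma:diagonal} to build a measurable preimage of $\bh$ when $W$ is singular on a set of positive measure is exactly the right fix, and the verification that $(U^{\T}\bh)_i$ vanishes a.e.\ on $E_i$ (as the $L^p$ limit of functions that vanish identically there) is sound.

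Note that the paper does not actually prove this lemma: it simply cites \cite{MR2906551,MR2574880}. So there is no ``paper's own proof'' to compare against here. Your self-contained argument is a standard and correct way to establish the result, and in fact supplies what the paper omits.
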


For a matrix weight that is non-invertible on a set of positive
measure, the equivalence classes of functions can be quite large.
However, if $W$ is invertible, it is straightforward to identify them.

\begin{lemma} \label{lemma:null}
Given $1\leq p < \infty$, an invertible matrix weight $W$, and
$\vf,\,\bg \in L^p_W(\Omega)$, then 
$\|\vf-\bg\|_{L^p_W(\Omega)}=0$ if and only if $\vf(x)=\bg(x)$ a.e.
\end{lemma}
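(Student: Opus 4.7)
The $(\Leftarrow)$ direction is immediate from the definition of the norm $\|\cdot\|_{L^p_W(\Omega)}$, since $\vf=\bg$ a.e.\ gives $|W^{1/p}(x)(\vf-\bg)(x)|^p = 0$ a.e., and the integral vanishes. So the content lies in the converse.

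For $(\Rightarrow)$, my plan is to exploit the ellipticity bound already established in Proposition~\ref{prop:elliptic}. Suppose $\|\vf-\bg\|_{L^p_W(\Omega)} = 0$. Then by the definition of the norm,
\[
\int_\Omega |W^{1/p}(x)(\vf(x)-\bg(x))|^p\,dx = 0,
\]
so the non-negative integrand must vanish a.e.\ in $\Omega$; that is, $|W^{1/p}(x)(\vf(x)-\bg(x))|^p = 0$ for a.e.\ $x\in\Omega$. By Proposition~\ref{prop:elliptic} applied with $\bxi = \vf(x)-\bg(x)$, we have the pointwise lower bound
\[
w(x)|\vf(x)-\bg(x)|^p \leq |W^{1/p}(x)(\vf(x)-\bg(x))|^p = 0 \quad \text{for a.e. } x\in\Omega,
\]
where $w(x) = |W^{-1}(x)|_{\op}^{-1}$. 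Since $W$ is invertible, the same proposition guarantees $w(x) > 0$ for a.e.\ $x\in\Omega$. Dividing by $w(x)$ on this full-measure set yields $|\vf(x)-\bg(x)|^p = 0$, hence $\vf(x)=\bg(x)$ a.e.

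I do not anticipate a genuine obstacle: the entire argument hinges on having $w(x)>0$ a.e., which is exactly the content of Proposition~\ref{prop:elliptic} for invertible matrix weights. The only small subtlety worth flagging is the implicit use of Lemma~\ref{lemma:diagonal} to ensure that $W^{1/p}$ (and $W^{-1/p}$) are measurable matrix functions, so that the composition $W^{1/p}(x)(\vf-\bg)(x)$ is a measurable vector-valued function and the pointwise arguments above are legitimate. Alternatively one could argue directly: diagonalize $W(x) = U(x)D(\lambda_i(x))U(x)^{\T}$ with $\lambda_i(x)>0$ a.e., observe that $W^{1/p}(x)$ is then invertible a.e., and apply $W^{-1/p}(x)$ to both sides of $W^{1/p}(x)(\vf-\bg)(x)=0$; but the route through Proposition~\ref{prop:elliptic} is shorter and already packaged.
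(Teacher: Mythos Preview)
Your proof is correct and follows essentially the same route as the paper: both directions are handled identically, with the nontrivial implication reduced via the lower ellipticity bound $w(x)|\bxi|^p \le |W^{1/p}(x)\bxi|^p$ from Proposition~\ref{prop:elliptic} together with $w(x)>0$ a.e. The paper phrases the converse as a norm inequality $\|\vf-\bg\|_{L^p_W(\Omega)} \ge \|\vf-\bg\|_{L^p(w,\Omega)}$ rather than a pointwise one, but this is the same argument.
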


\begin{proof}
Clearly, if $\vf(x)=\bg(x)$ a.e., then $\|\vf-\bg\|_{L^p_W(\Omega)}=0$.
Since $W$ is an invertible matrix weight, we can apply
Proposition~\ref{prop:elliptic} to prove the converse.   By
the ellipticity condition, 
\[ 0 = \|\vf-\bg\|_{L^p_W(\Omega)} \geq
\|\vf-\bg\|_{L^p(w,\Omega)},\]
and since $w(x)>0$ a.e., it follows that $\vf(x)-\bg(x)=\boldsymbol{0}$
a.e.
\end{proof}

The set of bounded functions of compact support, $L_c^\infty(\Omega)$, and
smooth functions of compact support, $C_c^\infty(\Omega)$, are both dense in
$L_W^p(\Omega)$.   These results seem to be known
(cf.~\cite[Theorem~5.1]{MR2015733}) but we have not found proofs in
the literature.  For completeness we include them here.  

\begin{prop} \label{prop:compact-dense}
Given a matrix weight $W:\Omega\rightarrow \Sp_d$, $L_c^\infty(\Omega)$ is dense in $L^p_W(\Omega)$.
\end{prop}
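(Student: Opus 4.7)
The plan is to approximate an arbitrary $\vf \in L^p_W(\Omega)$ by truncating simultaneously in space and in magnitude. First I would fix an exhaustion of $\Omega$ by compact sets. If $\Omega = \R^n$, take $K_k = \overline{B(0,k)}$; otherwise take
\[ K_k = \{x \in \Omega : |x| \leq k \text{ and } \dist(x,\partial\Omega) \geq 1/k\}, \]
so that each $K_k$ is compact in $\Omega$ and $\bigcup_k K_k = \Omega$. Then set
\[ E_k = K_k \cap \{x \in \Omega : |\vf(x)| \leq k\}, \qquad \vf_k = \vf\,\chi_{E_k}. \]
Since $\vf$ is measurable, each $E_k$ is measurable; the function $\vf_k$ is bounded by $k$, measurable, and vanishes outside the compact set $K_k \subset \Omega$, so $\vf_k \in L_c^\infty(\Omega)$.

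Next I would show $\vf_k \to \vf$ in $L^p_W(\Omega)$ by dominated convergence applied to the scalar integrand
\[ g_k(x) = |W^{1/p}(x)(\vf(x) - \vf_k(x))|^p = |W^{1/p}(x)\vf(x)|^p \,\chi_{\Omega \setminus E_k}(x). \]
The pointwise estimate $g_k(x) \leq |W^{1/p}(x)\vf(x)|^p$ furnishes an integrable majorant, since $\vf \in L^p_W(\Omega)$. For pointwise convergence, I would use that $\vf : \Omega \to \R^d$ is measurable, so each component is finite almost everywhere, and hence $|\vf(x)| < \infty$ for a.e.\ $x \in \Omega$; for such an $x$ we have $x \in K_k$ for all $k$ sufficiently large (depending on $x$) and $|\vf(x)| \leq k$ eventually, so $x \in E_k$ and $\vf_k(x) = \vf(x)$ for all large $k$. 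Thus $g_k \to 0$ a.e., and the dominated convergence theorem yields $\|\vf - \vf_k\|_{L^p_W(\Omega)} \to 0$.

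There is no real obstacle here, and notably no assumption on $W$ beyond its being a matrix weight is used: neither invertibility nor a matrix $\A_p$ condition enters. The only things one needs to be careful about are the measurability of the truncation sets $E_k$, the choice of an exhaustion valid in both the bounded and unbounded cases, and the observation that finiteness of $\vf$ almost everywhere is automatic from its being an $\R^d$-valued measurable function. Passing from density of $L_c^\infty(\Omega)$ to density of $C_c^\infty(\Omega)$ would be the natural next step, but that requires more than the bare definition of $L^p_W$ and is not part of this proposition.
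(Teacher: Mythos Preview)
Your argument is correct and is genuinely different from the paper's. The paper first reduces to the case of a diagonal matrix weight via the measurable diagonalization Lemma~\ref{lemma:diagonal}: writing $W=UDU^{\T}$, it observes that $\|\vf\|_{L^p_W}=\|U^{\T}\vf\|_{L^p_D}$, and for diagonal $D=D(\lambda_i)$ the norm splits as $\|\vf\|_{L^p_D}^p \approx \sum_i \int |f_i|^p\lambda_i\,dx$, so one can approximate each component $f_i$ separately using the classical scalar fact that $L_c^\infty$ is dense in $L^p(\lambda_i,\Omega)$. You bypass all of this: the simultaneous spatial and magnitude truncation $\vf_k=\vf\chi_{E_k}$ plus dominated convergence on the scalar integrand $|W^{1/p}(\vf-\vf_k)|^p$ handles the matrix weight in one stroke, with no need for diagonalization or the scalar black box. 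Your route is shorter and more self-contained; the paper's route has the modest advantage of showcasing the reduction-to-diagonal technique, but that technique is not otherwise essential here. One very minor remark: since by definition $\vf:\Omega\to\R^d$, the vector $\vf(x)$ is finite \emph{everywhere}, not merely a.e., so your pointwise convergence actually holds at every $x\in\Omega$.
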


\begin{proof}
First assume that $W(x)$ is diagonal, that is $W(x)=D(\lambda_i(x))$.  Fix $\vf \in
L^p_W(\Omega)$.  Then by the  non-negativity of each $\lambda_i$ and the
equivalence of norms,
\[ \int_\Omega |W^{1/p}(x)\vf(x)|_2^p\,dx
\approx \int_\Omega |W^{1/p}(x)\vf(x)|_p^p\,dx
= \sum_{i=1}^d \int_\Omega  |f_i(x)|^p\lambda_i(x)\,dx. \]
Therefore, we have that $f_i \in L^p(\lambda_i,\Omega)$.  Since
$\lambda_i \in L_\loc^1(\Omega)$, $\lambda_i\,dx$ is a regular Borel measure,
and so $L_c^\infty(\Omega)$ is dense in $L^p(\lambda_i,\Omega)$.  Hence, given
any $\epsilon>0$, there
exists $g_i\in L_c^\infty(\Omega)$ such that
$\|f_i-g_i\|_{L^p(\lambda_i,\Omega)}<\epsilon$.   Let
$\bg=(g_1,\ldots,g_d)$.  By our choice of the $g_i$'s we 
conclude that
\[ \|\vf - \bg \|_{L^p_W(\Omega)} \lesssim \epsilon. \]

Now fix an arbitrary matrix weight $W$ and by
Lemma~\ref{lemma:diagonal} let  $D=U^\T W U$ be its diagonalization.
Let $\vf \in L^p_W(\Omega)$ and set $\bh = U^\T \vf$.  Then by the
orthogonality of $U$,
\[ |D^{1/p}\bh|=|U^\T W^{1/p}UU^\T \vf | = |W^{1/p}\vf|. \]
Hence, $\bh \in L^p_D(\Omega)$ and by the previous argument, for any
$\epsilon>0$, there exists $\bg \in L_c^\infty(\Omega)$ such that 
$\|\bh-\bg\|_{L^p_D(\Omega)}<\epsilon$.   Using orthogonality again,
we have that
\[ |D^{1/p}(\bh - \bg)| = |U^\T W^{1/p}U( U^\T \vf-\bg)|=
|W^{1/p}(\vf - U\bg)|, \]
and since $|U\bg|\leq |U|_{\op}|\bg|$, $U\bg \in L_c^\infty(\Omega)$.  
This completes the proof.
\end{proof}

As a consequence we have that smooth functions are dense in
$L^p_W(\Omega)$. 

\begin{prop} \label{prop:smoothdense} 
Given a matrix weight $W:\Omega\rightarrow \Sp_d$, $C_c^\infty(\Omega)$
is dense in $L^p_W(\Omega)$.  
\end{prop}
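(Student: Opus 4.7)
The plan is to combine Proposition~\ref{prop:compact-dense} with a standard mollification argument. Given $\vf \in L^p_W(\Omega)$ and $\epsilon>0$, first invoke Proposition~\ref{prop:compact-dense} to choose $\bg \in L_c^\infty(\Omega)$ with $\|\vf-\bg\|_{L^p_W(\Omega)} < \epsilon/2$. The problem is then reduced to approximating $\bg$ in $L^p_W(\Omega)$ by smooth, compactly supported functions, so it suffices to construct $\bg_\delta \in C_c^\infty(\Omega)$ such that $\|\bg-\bg_\delta\|_{L^p_W(\Omega)}\to 0$ as $\delta\to 0^+$.

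To do this, fix a standard nonnegative mollifier $\eta \in C_c^\infty(\R^n)$ with $\int \eta = 1$, set $\eta_\delta(x) = \delta^{-n}\eta(x/\delta)$, and define $\bg_\delta := \eta_\delta * \bg$ componentwise. Let $K = \supp \bg \Subset \Omega$; for all $\delta$ less than some $\delta_0>0$, the support of $\bg_\delta$ lies in a compact set $K_{\delta_0} \Subset \Omega$, and $\bg_\delta \in C_c^\infty(\Omega)$. Standard properties of mollification give $\bg_\delta(x) \to \bg(x)$ a.e.\ (in fact at every Lebesgue point of $\bg$), together with the uniform bound $|\bg_\delta(x)| \le \|\bg\|_\infty$.

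To pass from pointwise convergence to $L^p_W$-convergence, apply Remark~\ref{remark:upper-bd}: for every $x$,
\[
|W^{1/p}(x)(\bg(x)-\bg_\delta(x))|^p \le v(x)\,|\bg(x)-\bg_\delta(x)|^p \le (2\|\bg\|_\infty)^p \, v(x)\, \chi_{K_{\delta_0}}(x).
\]
The right-hand side is an $L^1(\Omega)$ dominating function because $v = |W|_{\op} \in L^1_\loc(\Omega)$ by the definition of a matrix weight, and $K_{\delta_0}$ is compact in $\Omega$. The dominated convergence theorem then yields $\|\bg-\bg_\delta\|_{L^p_W(\Omega)} \to 0$, so we may select $\delta$ so that this is less than $\epsilon/2$, and the triangle inequality completes the argument.

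The argument is essentially routine and presents no serious obstacle; the only point worth flagging is that no invertibility or matrix $\A_p$ hypothesis on $W$ is needed. The sole structural input is the upper ellipticity bound of Remark~\ref{remark:upper-bd} together with the local integrability of the scalar weight $v = |W|_{\op}$, which together permit the pointwise almost everywhere convergence of $\bg_\delta \to \bg$ to be upgraded to convergence in the matrix-weighted norm via dominated convergence.
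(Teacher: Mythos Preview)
Your proof is correct and follows essentially the same route as the paper: reduce to $\bg\in L_c^\infty(\Omega)$ via Proposition~\ref{prop:compact-dense}, then use the upper bound from Remark~\ref{remark:upper-bd} together with $v\in L^1_\loc(\Omega)$ to pass to a smooth approximation. The only cosmetic difference is that the paper invokes the density of $C_c^\infty(\Omega)$ in the scalar space $L^p(v,\Omega)$ as a known fact, whereas you carry out the mollification and dominated convergence explicitly.
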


\begin{proof} 
Fix $\vf \in L^p_W(\Omega)$ and let $\epsilon>0$.  By Proposition~\ref{prop:compact-dense},
there exists $\bg \in L_c^\infty(\Omega)$ such that
$\|\vf-\bg\|_{L^p_W(\Omega)}<\epsilon/2$.  
Moreover, if we let $v(x)=|W(x)|_{\op}$ then $v\in L^1_\loc(\Omega)$
and 
$$\int_{\Omega}|\bg(x)|^pv(x)\; dx\leq \|\bg\|^p_\infty v(\supp(\bg))<\infty.$$
Thus $|\bg|\in L^p(v,\Omega)$ and, in particular, each component
function of $\bg$ belongs to $L^p(v,\Omega)$.   Therefore, there exists
$\bh \in C_c^\infty(\Omega)$ such that
$\|\bg-\bh\|_{L^p(v,\Omega)}<\epsilon/2$.  By
Remark~\ref{remark:upper-bd}, 
$|W^{1/p}(\bg-\bh)|^p\leq v|\bg-\bh|^p,$
so we can conclude that $\|\vf-\bh\|_{L^p_W(\Omega)}<\epsilon$.
\end{proof}

\section{Matrix $\A_p$}
\label{section:matrix-Ap}

In this section we define matrix $\A_p$ weights and prove some of
their properties.  When $p>1$ they are often defined in terms of norms on
$\R^d$, but  here we take as
our definition an equivalent condition due to Roudenko~\cite{MR1928089}
that more closely resembles the
definition of  scalar $A_p$ weights.  Moreover, this approach also
leads naturally to the definition of matrix $\A_1$, which is due to Frazier
and Roudenko~\cite{MR2104276}.

\begin{definition}  \label{defn:matrix-Ap}
Given $1<p<\infty$, an invertible  matrix weight $W:\Omega\rightarrow
\Sp_d$ is in matrix $\A_p(\Omega)$, denoted by $W\in \A_p(\Omega)$, if
$W^{-p/p'}$ is also a matrix weight and 
$$[W]_{\A_p(\Omega)}=\sup_Q \dashint_{Q\cap\Omega}\left(\ \dashint_{Q\cap\Omega}
  |W^{1/p}(x)W^{-1/p}(y)|_{\op}^{p'}\,dy\right)^{p/p'}dx<\infty,$$
where the supremum is taken over all cubes in
$\R^n$ and where $p'$ is the dual exponent to $p$.  When $p=1$, we say
that $W\in \A_1(\Omega)$ if $W^{-1}$ is a matrix weight and 
$$[W]_{\A_1}=\sup_Q \esssup_{x\in Q} \avgint_{Q\cap\Omega}
  |W(y)W^{-1}(x)|_{\op}\,dy<\infty.$$
\end{definition}

\begin{remark} \label{remark:matrix-Ap-domain}
As is the case  for scalar weights
(cf.~Remark~\ref{remark:Ap-domain}), if $\Omega=\R^n$, then we get an
equivalent definition if we replace cubes with balls.  We will want to
elide between balls and cubes on more general domains.  Therefore, as
in the scalar case, given
any matrix weight $W$ on a domain $\Omega$, we will implicitly assume
that it satisfies the matrix $\A_p$ condition on some larger domain
$\Omega'$ and we will suppress any reference to the domain, writing
$\A_p$ instead of $\A_p(\Omega)$.   We note in passing that the
problem of characterizing those domains $\Omega$ such that every $W\in
\A_p(\Omega)$ is the restriction of a matrix in $\A_p(\R^n)$ is open.  
\end{remark}

\begin{remark}
When $d=1$ and $W(x)=w(x)$ is a scalar valued weight, the matrix
$\A_p$ condition becomes the $A_p$ condition as defined in
Section~\ref{section:weights}.  
\end{remark}

The matrix $\A_p$ weights satisfy the same duality relationship as
scalar $A_p$ weights.  This is due to
Roudenko~\cite[Corollary~3.3]{MR1928089} when $\Omega=\R^n$, but the
proof given there extends without change to the more general setting.  

\begin{lemma}  \label{lemma:Ap-dual}
Given $1<p<\infty$ and  a matrix weight $W$, $W\in \A_p$ if and only if $W^{-p'/p}\in \A_{p'}$.
\end{lemma}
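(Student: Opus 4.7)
The plan is to reduce both conditions to a common, manifestly symmetric reducing-operator characterization of matrix $\A_p$, and then to conclude using the self-adjointness of the reducing operators.

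First I would recall the reducing operators. For a fixed cube $Q$, the functionals
$$\rho_{Q}(\bv) = \left(\dashint_{Q\cap\Omega}|W^{1/p}(x)\bv|^p\,dx\right)^{1/p}, \qquad \rho^*_{Q}(\bv) = \left(\dashint_{Q\cap\Omega}|W^{-1/p}(y)\bv|^{p'}\,dy\right)^{1/p'}$$
are norms on $\R^d$ since $W$ is invertible a.e. By John's ellipsoid theorem, there exist self-adjoint positive definite matrices $\Phi_Q$ and $\Psi_Q$ and a constant $c_d$ depending only on $d$ such that
$$|\Phi_Q\bv| \leq \rho_{Q}(\bv) \leq c_d |\Phi_Q\bv|, \qquad |\Psi_Q\bv| \leq \rho^*_{Q}(\bv) \leq c_d |\Psi_Q\bv|$$
for all $\bv \in \R^d$. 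Following Corollary 3.3 of Roudenko~\cite{MR1928089}, the central step is the equivalence
$$[W]_{\A_p} \approx \sup_Q |\Phi_Q \Psi_Q|_{\op}^p,$$
with constants depending only on $d$ and $p$. One direction is obtained by replacing the operator norm of $W^{1/p}(x)W^{-1/p}(y)$ by the comparable Hilbert--Schmidt norm, applying Fubini, and invoking the reducing-operator estimates; the reverse direction uses the sup-representation $|W^{1/p}(x)W^{-1/p}(y)|_{\op} = \sup_{|\bw|=1}|W^{1/p}(x)W^{-1/p}(y)\bw|$ together with the choice of $\bw$ that extremizes $|\Phi_Q\Psi_Q \bw|$. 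Roudenko's argument uses only averages over cubes, so it transfers verbatim once $Q$ is replaced by $Q\cap\Omega$ throughout.

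Next I would apply this reformulation to $\widetilde W := W^{-p'/p}$ with exponent $p'$. A direct calculation gives $(\widetilde W)^{1/p'} = W^{-1/p}$ and $(\widetilde W)^{-1/p'} = W^{1/p}$, so the norms defining the reducing operators for $\widetilde W$ in $\A_{p'}$ are precisely $\rho^*_Q$ and $\rho_Q$, with associated reducing operators $\Psi_Q$ and $\Phi_Q$ respectively. Hence
$$[W^{-p'/p}]_{\A_{p'}} \approx \sup_Q |\Psi_Q \Phi_Q|_{\op}^{p'}.$$
Since $\Phi_Q$ and $\Psi_Q$ are self-adjoint, $(\Phi_Q\Psi_Q)^\T = \Psi_Q\Phi_Q$, and therefore $|\Phi_Q\Psi_Q|_{\op} = |\Psi_Q\Phi_Q|_{\op}$. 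Consequently $[W]_{\A_p} < \infty$ if and only if $[W^{-p'/p}]_{\A_{p'}} < \infty$, which is the desired equivalence.

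The main obstacle is the reducing-operator characterization itself. Without it, the raw definitions of $[W]_{\A_p}$ and $[W^{-p'/p}]_{\A_{p'}}$ involve mixed norms of $(x,y)\mapsto |W^{1/p}(x)W^{-1/p}(y)|_{\op}$ taken in opposite orders ($L^p_x L^{p'}_y$ versus $L^{p'}_y L^p_x$), and only one direction of that comparison is directly accessible from Minkowski's integral inequality. John's ellipsoid supplies the missing finite-dimensional symmetry that collapses the duality to the trivial identity $|\Phi_Q\Psi_Q|_{\op} = |\Psi_Q\Phi_Q|_{\op}$.
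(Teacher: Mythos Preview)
Your proposal is correct and follows exactly the approach the paper defers to: the paper does not give its own argument but cites Roudenko~\cite[Corollary~3.3]{MR1928089}, whose proof is precisely the reducing-operator characterization $[W]_{\A_p}\approx\sup_Q|\Phi_Q\Psi_Q|_{\op}^p$ together with the symmetry $|\Phi_Q\Psi_Q|_{\op}=|\Psi_Q\Phi_Q|_{\op}$ coming from self-adjointness. Your sketch of that argument, including the observation that it localizes to $Q\cap\Omega$ without change, is accurate.
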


By definition, if $W\in \A_p$ it is an invertible matrix weight, so we
have associated to it the scalar weights $v$ and $w$, and $W$ satisfies the
degenerate ellipticity condition~\eqref{eqn:elliptic}.  Moreover,  these weights
are scalar $A_p$ weights. 

\begin{lemma} \label{lemma:scalar-Ap}
Given  $1 \leq p<\infty$, if $W\in \A_p$,  then $v(x)=|W(x)|_{op}$ and
$w(x) = |W^{-1}(x)|_{\op}^{-1}$  are
scalar $A_p$ weights.
\end{lemma}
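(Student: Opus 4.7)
The plan is to exploit submultiplicativity of the operator norm together with the identity $W^{-1/p}(y)W^{1/p}(y)=I$ (valid a.e., since $W\in\A_p$ forces $W$ to be invertible) in order to extract pointwise lower bounds on $|W^{1/p}(x)W^{-1/p}(y)|_{\op}$ in terms of the scalar weights $v$ and $w$. Since $W^{1/p}\in\Sp_d$ has eigenvalues $\lambda_i^{1/p}$, one has $|W^{1/p}(x)|_{\op}=v(x)^{1/p}$ and likewise $|W^{-1/p}(x)|_{\op}=w(x)^{-1/p}$. Factoring $W^{1/p}(x)=\bigl[W^{1/p}(x)W^{-1/p}(y)\bigr]\,W^{1/p}(y)$ gives
$$v(x)^{1/p} \leq |W^{1/p}(x)W^{-1/p}(y)|_{\op}\cdot v(y)^{1/p}.$$

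For $p>1$, inserting this bound into the $\A_p$ integral immediately yields
$$[W]_{\A_p} \;\geq\; \left(\avgint_Q v(x)\,dx\right)\left(\avgint_Q v(y)^{-p'/p}\,dy\right)^{p/p'},$$
so $v\in A_p$ with $[v]_{A_p}\leq [W]_{\A_p}$ (and, in particular, local integrability of $v^{-p'/p}$ is automatic). To handle $w$, I invoke the duality Lemma~\ref{lemma:Ap-dual}: $W\in\A_p$ implies $W^{-p'/p}\in\A_{p'}$. Since $|W^{-p'/p}(x)|_{\op}=|W^{-1}(x)|_{\op}^{p'/p}=w(x)^{-p'/p}$, applying the step just completed to $W^{-p'/p}$ with exponent $p'$ shows $w^{-p'/p}\in A_{p'}$. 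Scalar $A_p$ duality then yields $w=(w^{-p'/p})^{1-p}\in A_p$ (using $p'(p-1)/p=1$).

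For $p=1$ the duality route is unavailable, so I deal with $v$ and $w$ directly from the $\A_1$ definition. The factorization $W(y)=\bigl[W(y)W^{-1}(x)\bigr]W(x)$ gives $|W(y)W^{-1}(x)|_{\op}\geq v(y)/v(x)$; substituting into the $\A_1$ definition and taking the essential supremum in $x$ produces
$$\frac{\avgint_Q v(y)\,dy}{\essinf_{x\in Q} v(x)} \;\leq\; [W]_{\A_1},$$
so $v\in A_1$. For $w$, I use that $W$ and $W^{-1}$ are self-adjoint, whence $|W(y)W^{-1}(x)|_{\op}=|W^{-1}(x)W(y)|_{\op}$; then $W^{-1}(x)=\bigl[W^{-1}(x)W(y)\bigr]W^{-1}(y)$ gives $|W^{-1}(x)W(y)|_{\op}\geq w(y)/w(x)$, and the same substitution into the $\A_1$ definition delivers $w\in A_1$ with $[w]_{A_1}\leq [W]_{\A_1}$.

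The argument is essentially bookkeeping once the lower bound on $|W^{1/p}(x)W^{-1/p}(y)|_{\op}$ is in hand; the only point genuinely requiring care is the symmetric role played by $W^{1/p}$ and $W^{-1/p}$ (i.e.\ the transpose-invariance of the operator norm via self-adjointness) that lets us swap which factor sits on the left when deriving the bound for $w$. I do not expect any substantial obstacle beyond this notational care, and the constants $[v]_{A_p}$ and $[w]_{A_p}$ both come out bounded by $[W]_{\A_p}$ up to a constant depending only on $p$.
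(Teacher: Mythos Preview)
Your proof is correct, and it is both simpler and more self-contained than the paper's. The duality step for $w$ when $p>1$ (apply the result for the top eigenvalue to $W^{-p'/p}\in\A_{p'}$, then use scalar $A_p$ duality) is identical to the paper's. Where you differ is in how you obtain $v\in A_p$ and how you handle $p=1$.

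For $p>1$, the paper simply cites Goldberg for $v\in A_p$; you instead give a direct argument via the submultiplicative lower bound $|W^{1/p}(x)W^{-1/p}(y)|_{\op}\geq v(x)^{1/p}v(y)^{-1/p}$ and plug it into the defining integral. This is genuinely more elementary and yields the sharp inequality $[v]_{A_p}\leq [W]_{\A_p}$ with no citation needed.

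For $p=1$, the paper builds a measurable unit vector field $\bv(y)$ (via the diagonalization lemma) satisfying $|W(y)\bv(y)|\gtrsim |W(y)|_{\op}$, and then manipulates $W(y)\bv(y)=W(y)W^{-1}(x)W(x)\bv(y)$ to bound averages; this produces dimensional constants. Your approach again uses only submultiplicativity, together with the transpose identity $|W(y)W^{-1}(x)|_{\op}=|W^{-1}(x)W(y)|_{\op}$ for symmetric matrices, and yields $[v]_{A_1},[w]_{A_1}\leq [W]_{\A_1}$ with no dimensional loss. The paper's construction is more hands-on but buys nothing here; your route is cleaner and uniform across all $1\leq p<\infty$.
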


\begin{remark}
The converse of this lemma is not true:  for a counter-example, see
Lauzon and Treil~\cite{MR2354705}. 
\end{remark}

\begin{proof}
First suppose that $p>1$.  The fact that $v\in A_p$ is  due to 
Goldberg~\cite[Corollary~2.3]{MR2015733}. (Again, his proof assumes
$\Omega=\R^n$, but it extends to
the general case without change.)  Further, by
Lemma~\ref{lemma:Ap-dual}, $W^{-p'/p}\in \A_{p'}$, so by
the definition of the operator norm and what we just proved,
\[ w^{-p'/p} = |W^{-1}|_{\op}^{p'/p}= |W^{-p'/p}|_{\op} \in A_{p'}. \]
Therefore, by the duality of scalar $A_p$ weights (which follows at
once from the definition), $w\in A_p$. 

\medskip

For the case $p=1$ we modify an argument from Frazier and
Roudenko~\cite[Lemma~2.1]{MR2104276}.  To  prove that $v \in
A_1$ we first
construct a measurable vector function $\bv$ such
that $|\bv(y)|=1$ and $|W(y)|_\op \lesssim|W(y)\bv(y)|$ a.e.
 If
$W=D(\lambda_i)$ is diagonal, let $\bv$  be the
constant vector $\bh=(d^{-1/2},\ldots,d^{-1/2})^\T$.  Then
\[ |D(y)\bh| \geq d^{-1/2}\max_i
\lambda_i(y) = d^{-1/2}|D(y)|_\op. \]
For a general $W$, 
let $D= UWU^\T$ be the diagonalization of $W$ from
Lemma~\ref{lemma:diagonal} and let  $\bv(y)=U^\T(y)\bh$. Then $\bv$ is
measurable and
\begin{equation*} |W(y)\bv(y)| = |U(y)W(y)U^\T(y) \bh|= |D(y)\bh| 
\geq d^{-1/2}|D(y)|_\op = d^{-1/2}|W(y)|_\op. 
\end{equation*}

Given such a vector function $\bv$, we can now estimate as follows.
Fix a cube $Q$, let $x\in Q$, and set $\bw(y)=W(x)\bv(y)$.  Then
\begin{multline*}
\avgint_Q |W(y)|_\op\,dy 
 \lesssim \avgint_Q |W(y)\bv(y)|\,dy
 = \avgint_Q |W(y)W^{-1}(x)\bw(y)|\,dy \\
 \leq \avgint_Q|W(y)W^{-1}(x)|_\op |\bw(y)|\,dy 
 \leq [W]_{\A_1} |W(x)|_\op. 
\end{multline*}

\medskip

To prove that $w\in A_1$, we can argue similarly.  Fix a cube $Q$ and
$x\in Q$.  Arguing as above, construct a vector $\bw=\bw(x)$ so that
$|\bw|=1$ and $|W^{-1}(x)|_\op \lesssim|W^{-1}(x)\bw|$.  Let $\bv=
W^{-1}(x)\bw$.   Then for any $y\in Q$,
\[ |W^{-1}(x)|_\op \lesssim |\bv| = |W^{-1}(y)W(y)\bv| \leq |W^{-1}(y)|_\op |W(y)\bv|. \]
Hence,
\begin{multline*}
|W^{-1}(x)|_\op \avgint_Q |W^{-1}(y)|_\op^{-1}\,dy 
 \lesssim \avgint_Q |W(y)\bv|\,dy \\
 = \avgint_Q |W(y)W^{-1}(x) \bw|\,dy 
 \leq \avgint_Q |W(y)W^{-1}(x)|_\op\,dy 
 \leq [W]_{\A_1}.
\end{multline*}
This completes the proof.
\end{proof}

\medskip

The matrix $\A_p$ condition characterizes the matrix weights $W$ such
that the averaging operators $\vf \mapsto \avgint_Q \vf(x)\,dx$ are
uniformly bounded on $L^p_W(\Omega)$.
(See~\cite[Proposition~2.1]{MR2015733} for the case $p>1$.)  This is
also true for more
general averaging operators.

\begin{prop}  \label{prop:averaging}
Let $\Q$ be a collection of  pairwise disjoint cubes in $\R^n$.  Given
$1\leq p< \infty$ and a matrix
weight $W \in \A_p$,  the averaging operator
\[ A_\Q\vf (x) = \sum_{Q\in \Q} \avgint_Q \vf(y)\,dy \cdot
\chi_Q(x) \]
satisfies
\[ \|A_\Q \vf\|_{L^p_W(\Omega)} \leq
[W]_{\A_p}^{1/p}\|\vf\|_{L^p_W(\Omega)} . \]
\end{prop}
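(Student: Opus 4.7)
The plan is to exploit the pairwise disjointness of $\Q$ to reduce the estimate to a single-cube bound, and then for each cube insert the factorization $W^{-1/p}(y)W^{1/p}(y) = I$ inside the averaging operator so that the matrix $\A_p$ condition can be applied directly. Since the cubes in $\Q$ are pairwise disjoint,
\[
\|A_\Q\vf\|_{L^p_W(\Omega)}^p = \sum_{Q\in\Q}\int_Q \Big|W^{1/p}(x)\avgint_Q \vf(y)\,dy\Big|^p\,dx,
\]
so it suffices to estimate each summand by $[W]_{\A_p}\int_Q |W^{1/p}(y)\vf(y)|^p\,dy$ and then sum over $Q\in\Q$.

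For the case $1<p<\infty$, on a fixed cube $Q$ I would write
\[
W^{1/p}(x)\avgint_Q \vf(y)\,dy = \avgint_Q W^{1/p}(x)W^{-1/p}(y)\bigl[W^{1/p}(y)\vf(y)\bigr]\,dy,
\]
take the $\R^d$-norm inside the integral using the operator-norm bound $|Av|\leq |A|_{\op}|v|$, and then apply Hölder's inequality in the $y$ variable with exponents $p'$ and $p$. This yields
\[
\Big|W^{1/p}(x)\avgint_Q \vf(y)\,dy\Big|^p \leq \Big(\avgint_Q |W^{1/p}(x)W^{-1/p}(y)|_\op^{p'}\,dy\Big)^{p/p'}\avgint_Q |W^{1/p}(y)\vf(y)|^p\,dy.
\]
Integrating in $x$ over $Q$, the last factor is independent of $x$ and can be pulled out, while the first factor integrated over $Q$ and divided by $|Q|$ is bounded by $[W]_{\A_p}$ according to Definition~\ref{defn:matrix-Ap}. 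Multiplying by $|Q|$ and using $|Q|\cdot\avgint_Q|W^{1/p}(y)\vf(y)|^p\,dy = \int_Q|W^{1/p}(y)\vf(y)|^p\,dy$ gives the per-cube bound with constant $[W]_{\A_p}$, and summing completes the proof.

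For $p=1$ the argument is essentially the same insertion trick but Hölder is replaced by a direct Fubini. After writing
\[
\Big|W(x)\avgint_Q \vf(y)\,dy\Big| \leq \avgint_Q |W(x)W^{-1}(y)|_\op|W(y)\vf(y)|\,dy
\]
and integrating in $x$ over $Q$, I would swap the order of integration so that $|W(y)\vf(y)|$ is multiplied by the average $\avgint_Q|W(x)W^{-1}(y)|_\op\,dx$, which is bounded by $[W]_{\A_1}$ for a.e.\ $y\in Q$ after relabeling the variables in the $\A_1$ condition. I expect no real obstacle here; the only delicate point is getting the variable roles right in the $\A_1$ definition (where the essential supremum is taken in the variable belonging to $W^{-1}$), but this is a cosmetic matter of bookkeeping rather than a substantive difficulty.
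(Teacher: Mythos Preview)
Your proposal is correct and follows essentially the same approach as the paper: both arguments exploit disjointness of the cubes, insert $W^{-1/p}(y)W^{1/p}(y)=I$ inside the average, apply H\"older's inequality (or Fubini when $p=1$), and then invoke Definition~\ref{defn:matrix-Ap}. The only minor technicality you do not mention is that the paper first extends $\vf$ by zero on $\R^n\setminus\overline{\Omega}$ to handle cubes that may not lie entirely in $\Omega$, but this is purely a bookkeeping issue.
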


\begin{proof}
To begin, define $\vf \equiv 0$ on $\mathbb{R}^n\setminus
\overline{\Omega}$.  We first consider the case $p>1$:  since the cubes in $\Q$ are disjoint, by
H\"older's inequality and the definition of matrix $\A_p$,
\begin{align}
& \int_\Omega |W^{1/p}(x)A_\Q\vf(x)|^p\,dx \notag\\
&\qquad \quad  \leq \int_{\Rn} |W^{1/p}(x)A_\Q\vf(x)|^p\,dx \notag \\
&\qquad \quad  = \int_{\Rn} \bigg| \sum_{Q\in \Q} \avgint_Q \chi_Q(x)
  W^{1/p}(x) W^{-1/p}(y) W^{1/p}(y)\vf(y)\,dy\bigg|^p
\,dx  \label{eqn:start-here}\\
&\qquad \quad  \leq \int_{\Rn} \sum_{Q\in \Q} \chi_Q(x)
\left(\avgint_Q | W^{1/p}(x) W^{-1/p}(y)|_{\op}^{p'}\,dy\right)^{p/p'}
\left(\avgint_Q | W^{1/p}(y) \vf(y)|\,dy\right)\,dx \notag \\
&\qquad \quad  = \sum_{Q\in \Q} \avgint_Q
\left(\avgint_Q | W^{1/p}(x)
  W^{-1/p}(y)|_{\op}^{p'}\,dy\right)^{p/p'}\,dx 
\left(\int_Q | W^{1/p}(y) \vf(y)|^p\,dy\right) \notag \\
& \qquad \quad \leq [W]_{\A_p} \int_{\Omega} | W^{1/p}(y)
  \vf(y)|^p\,dy. \notag
\end{align}
When $p=1$ the proof is almost identical, omitting H\"older's
inequality and using Fubini's theorem and the definition of $\A_1$.
\end{proof}

We now want to prove that for ``nice'' functions
$\phi\in C_c^\infty(B(0,1))$, the convolution operator
$\vf \mapsto \phi*\vf$ is bounded on $L^p_W(\Omega)$ and that
approximate identities defined using $\phi$ converge.  We first begin
with a lemma.

\begin{lemma} \label{lemma:convo-averages}
Given $1 \leq p<\infty$ and $W\in \A_p$, then for any cube $Q$ and $\vf\in L^p_W(\Omega)$
\[ \| |Q|^{-1}\chi_Q * \vf \|_{L^p_W(\Omega)} \leq C(n,p)[W]_{\A_p}^{1/p}
\|\vf\|_{L^p_W(\Omega)}. \]
The same inequality is true if we replace the cube $Q$ with any ball
$B$.
\end{lemma}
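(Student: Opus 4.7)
The statement is the natural convolution analogue of Proposition~\ref{prop:averaging}, and the main obstacle is that $|Q|^{-1}\chi_Q*\vf(x)$ is an average of $\vf$ over the \emph{translated} cube $x+Q$, which does not belong to any fixed partition of $\R^n$; consequently, Proposition~\ref{prop:averaging} cannot be invoked directly. My plan is to absorb this translation by the classical triple-enlargement (``$3$-trick'') covering argument and then to repeat the matrix-$\A_p$ calculation from the proof of Proposition~\ref{prop:averaging} on the enlarged cubes.

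After a harmless translation I may assume that $Q$ is centered at the origin; extending $\vf$ by zero off $\Omega$, we then have
\[ |Q|^{-1}\chi_Q*\vf(x)=\avgint_{x+Q}\vf(z)\,dz, \]
an average of $\vf$ over the cube of side $\ell(Q)$ centered at $x$. Let $\mathcal{G}$ be the grid of cubes of side $\ell(Q)$ that tiles $\R^n$ and, for $R\in\mathcal{G}$, let $\tilde R=3R$ be the concentric cube of triple side-length. A direct geometric check gives $x+Q\subset\tilde R$ whenever $x\in R$, so for any such $x$,
\[ \avgint_{x+Q}|W^{1/p}(x)\vf(z)|\,dz\;\leq\;3^n\avgint_{\tilde R}|W^{1/p}(x)\vf(z)|\,dz. \]

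Next I would mimic the proof of Proposition~\ref{prop:averaging} with $\tilde R$ in place of $Q$. Inserting $W^{-1/p}(z)W^{1/p}(z)$ inside the last average, applying H\"older's inequality on $\tilde R$ (for $p>1$), integrating in $x$ over $R\subset \tilde R$, and invoking the matrix $\A_p$ condition on $\tilde R$ yields
\[ \int_R|W^{1/p}(x)[|Q|^{-1}\chi_Q*\vf](x)|^p\,dx\;\leq\;3^{np}[W]_{\A_p}\int_{\tilde R}|W^{1/p}(z)\vf(z)|^p\,dz. \]
Summing over $R\in\mathcal{G}$ and observing that each point of $\R^n$ lies in exactly $3^n$ of the triples $\tilde R$, we obtain the desired bound with $C(n,p)=3^{n(1+1/p)}$. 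The case $p=1$ requires only cosmetic changes: Fubini's theorem replaces H\"older, and the $\A_1$ condition (applied as $\esssup_{z\in\tilde R}\avgint_{\tilde R}|W(y)W^{-1}(z)|_{\op}\,dy\leq [W]_{\A_1}$) is used in place of $\A_p$. Finally, the statement for balls follows by enclosing $B$ in a concentric cube of comparable side-length and rerunning the same argument.
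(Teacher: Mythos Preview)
Your proposal is correct and follows essentially the same approach as the paper. The paper also tiles $\R^n$ by translates $Q_\bk=Q+\ell(Q)\bk$, observes that for $x\in Q_\bk$ the set $\{y:x-y\in Q\}$ sits inside $3Q_\bk$, and then feeds this into the H\"older/$\A_p$ computation from Proposition~\ref{prop:averaging}. The only cosmetic difference is that the paper organizes the triples $\{3Q_\bk\}$ into $3^n$ pairwise disjoint families and appeals to Proposition~\ref{prop:averaging} within each family, whereas you keep the cubes $\tilde R$ together and use their $3^n$-fold bounded overlap directly when summing; these are equivalent bookkeeping devices and yield the same constant $C(n,p)=3^{n(1+1/p)}$. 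Your treatment of the case $p=1$ via Fubini and of the ball case via enclosure in a cube also matches the paper's.
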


\begin{proof}
Define the cubes 
\[ \{  Q_\bk = Q + \ell(Q)\bk : \bk \in \Z^n \}. \]
The cubes in $Q_\bk$ form a partition of $\R^n$.  Further, we can then divide the cubes $\{3Q_\bk\}$ into $3^n$ families $\Q_j$ of
pairwise disjoint cubes.  But then for every $\bk\in \Z^n$ and $x\in
Q_\bk$, extending $\vf$ by zero in $\mathbb{R}^n\setminus
\overline{\Omega}$,  we have that 
\begin{multline*}
 |W^{1/p}(x) |Q|^{-1}\chi_Q * \vf (x)|
= \bigg|  |Q|^{-1} \int_{\R^n} W^{1/p}(x) \vf(y)\chi_Q(x-y)\,dy \bigg| \\
\leq |Q|^{-1} \int_{\R^n} |W^{1/p}(x) \vf(y)\chi_Q(x-y)|\,dy 
\leq C(n) \avgint_{3Q_\bk}|W^{1/p}(x) \vf(y)|\,dy. 
\end{multline*}
Therefore, 
\begin{multline*}
 \int_{\Rn}|W^{1/p}(x) |Q|^{-1}\chi_Q * \vf (x)|^p\,dx 
\leq C(n,p) \sum_{\bk \in \Z^n} \int_{Q_\bk}
\bigg(\avgint_{3Q_\bk} |W^{1/p}(x) \vf(y)|\,dy\bigg)^p\,dx \\
 \leq C(n,p)\sum_{j=1}^{3^n}\sum_{Q\in \Q_j} 
\int_{Q}\bigg(\avgint_{Q} |W^{1/p}(x) \vf(y)|\,dy\bigg)^p\,dx,
\end{multline*}
and we can now argue exactly as in the proof of
Proposition~\ref{prop:averaging}, starting at~\eqref{eqn:start-here}, to get the desired estimate for
cubes.

To prove this for balls, fix a ball $B$, and let $Q$ be the smallest
cube containing $B$.  Then $|B|\approx |Q|$, and arguing as above, we
get
\[ |W^{1/p}(x) |B|^{-1}\chi_B * \vf (x)|\leq C(n)
\avgint_{3Q_\bk}|W^{1/p}(x) \vf(y)|\,dy, \]
and the proof continues as before.
\end{proof}

\begin{theorem} \label{thm:smooth-convo}
Given $1\leq p < \infty$ and $W\in \A_p$, let $\phi\in C_c^\infty(B(0,1))$ be a non-negative, radially symmetric
and decreasing
function with $\|\phi\|_{L^1(\mathbb{R}^n)}=1$, and for $t>0$ let
$\phi_t(x)=t^{-n}\phi(x/t)$.  Then 
\begin{equation} \label{eqn:smooth-convo1} 
 \sup_{t>0} \|\phi_t * \vf\|_{L^p_W(\Omega)} \leq
C(n,p)[W]_{\A_p}^{1/p}\|f\|_{L^p_W(\Omega)} 
\end{equation}
for every $\vf\in L^p_W(\Omega)$.  As a consequence, we have that
for every such $\vf$,
\begin{equation} \label{eqn:smooth-convo2}
 \lim_{t\rightarrow 0} \|\phi_t * \vf-\vf\|_{L^p_W(\Omega)} = 0. 
\end{equation}
\end{theorem}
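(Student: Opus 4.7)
The plan is to reduce the convolution with $\phi_t$ to an average of ball-convolutions using the radial/decreasing hypothesis, apply the ball version of Lemma~\ref{lemma:convo-averages}, and then deduce convergence by the standard density argument now that Proposition~\ref{prop:smoothdense} and the uniform bound are in hand. Throughout, extend $\vf$ by zero to $\R^n\setminus\overline{\Omega}$ so that convolution is defined globally.

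For the uniform bound \eqref{eqn:smooth-convo1}, I would exploit the fact that a nonnegative, radial, decreasing $\phi_t$ can be written by the layer-cake formula as a superposition of indicators of balls centered at the origin. Indeed, for each $s\in(0,\|\phi_t\|_\infty)$ the level set $\{\phi_t>s\}$ is a ball $B_t(s) := B(0,r_t(s))$, so
\[
\phi_t(y) \;=\; \int_0^\infty \chi_{B_t(s)}(y)\,ds
\;=\; \int_0^\infty |B_t(s)|\,\frac{1}{|B_t(s)|}\chi_{B_t(s)}(y)\,ds,
\]
and $\int_0^\infty |B_t(s)|\,ds = \int_{\R^n}\phi_t = 1$. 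Consequently,
\[
\phi_t*\vf(x) \;=\; \int_0^\infty |B_t(s)|\cdot\Big(\tfrac{1}{|B_t(s)|}\chi_{B_t(s)}*\vf\Big)(x)\,ds.
\]
Applying $|W^{1/p}(x)\,\cdot\,|$ inside and using Minkowski's integral inequality for $L^p_W$, I pass the norm under the $s$-integral and invoke the ball case of Lemma~\ref{lemma:convo-averages} to each term:
\[
\|\phi_t*\vf\|_{L^p_W(\Omega)}
\;\le\; \int_0^\infty |B_t(s)|\cdot C(n,p)[W]_{\A_p}^{1/p}\|\vf\|_{L^p_W(\Omega)}\,ds
\;=\; C(n,p)[W]_{\A_p}^{1/p}\|\vf\|_{L^p_W(\Omega)},
\]
which is \eqref{eqn:smooth-convo1}. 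This bound is uniform in $t>0$ since the constant from Lemma~\ref{lemma:convo-averages} does not depend on the radius of the ball.

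For the convergence statement \eqref{eqn:smooth-convo2}, I would argue by density. Given $\vf\in L^p_W(\Omega)$ and $\epsilon>0$, Proposition~\ref{prop:smoothdense} gives $\bh\in C_c^\infty(\Omega)$ with $\|\vf-\bh\|_{L^p_W(\Omega)}<\epsilon$. Split
\[
\|\phi_t*\vf-\vf\|_{L^p_W(\Omega)}
\;\le\; \|\phi_t*(\vf-\bh)\|_{L^p_W(\Omega)} + \|\phi_t*\bh-\bh\|_{L^p_W(\Omega)} + \|\bh-\vf\|_{L^p_W(\Omega)}.
\]
The first and third terms are each bounded by a constant multiple of $\epsilon$ using \eqref{eqn:smooth-convo1}. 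For the middle term, since $\bh\in C_c^\infty(\Omega)$, for all sufficiently small $t$ the convolution $\phi_t*\bh$ is supported in a fixed compact set $K\Subset\Omega$ and converges to $\bh$ uniformly. Using Remark~\ref{remark:upper-bd},
\[
\|\phi_t*\bh-\bh\|_{L^p_W(\Omega)}^p
\;\le\; \int_K |\phi_t*\bh-\bh|^p\,v(x)\,dx
\;\le\; \|\phi_t*\bh-\bh\|_\infty^p\, v(K),
\]
and $v(K)<\infty$ since $v\in L^1_\loc(\Omega)$. Letting $t\to 0$ sends the middle term to $0$, and then $\epsilon\to 0$ finishes the proof.

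The only nontrivial step is the reduction of $\phi_t$ to ball averages in \eqref{eqn:smooth-convo1}; the rest is a standard density argument once Lemma~\ref{lemma:convo-averages} is available. The decisive point is that the layer-cake representation converts the problem into an integral of ball averages against a probability measure on scales, so Minkowski's inequality and the uniform-in-radius bound from Lemma~\ref{lemma:convo-averages} pass through cleanly and avoid any appeal to a maximal function.
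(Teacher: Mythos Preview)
Your proof is correct and follows essentially the same approach as the paper: both decompose $\phi_t$ as a convex combination of normalized ball indicators (the paper discretely via step-function approximation from below and Fatou, you continuously via the layer-cake formula and Minkowski's integral inequality), apply Lemma~\ref{lemma:convo-averages} to each piece, and then deduce convergence by the identical density argument using Proposition~\ref{prop:smoothdense}. The only distinction is cosmetic---your continuous layer-cake is the limit of the paper's discrete step functions---so the two arguments are interchangeable.
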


\begin{proof}
To prove~\eqref{eqn:smooth-convo1}, consider the function 
\[ \Phi(x) = \sum_{k=1}^\infty a_k |B_k|^{-1}\chi_{B_k}(x), \]
where the balls $B_k$ are centered at the origin, $B_{k+1}\subset B_k$
for all $k$, and the $a_k$ are non-negative with $\sum a_k =1$.  Extending $\vf$ by zero as before it
will suffice to show that 
\[ \|\Phi*\vf\|_{L^p_W(\mathbb{R}^n)} \leq
C(n,p)[W]_{\A_p}^{1/p}\|\vf\|_{L^p_W(\mathbb{R}^n)}; \]
inequality~\eqref{eqn:smooth-convo1} follows by approximating $\phi_t$
from below by a sequence of such functions and applying Fatou's lemma.
But by Minkowski's inequality and Lemma~\ref{lemma:convo-averages},
\begin{multline*}
 \|\Phi*\vf\|_{L^p_W(\mathbb{R}^n)} \leq 
\sum_{k=1}^\infty a_k \||B_k|^{-1} \chi_{B_k}*\vf\|_{L^p_W(\mathbb{R}^n)} \\
\leq C(n,p)[W]_{\A_p}^{1/p} \sum_{k=1}^\infty a_k \|\vf\|_{L^p_W(\mathbb{R}^n)}
= C(n,p)[W]_{\A_p}^{1/p} \|\vf\|_{L^p_W(\mathbb{R}^n)}. 
\end{multline*}

To prove~\eqref{eqn:smooth-convo2}, fix $\epsilon>0$.  Given $\vf \in
L^p_W(\Omega)$, by Proposition~\ref{prop:smoothdense} there exists
$\bg \in C_c^\infty(\Omega)$ such that
$\|\vf-\bg\|_{L^p_W(\Omega)}<\epsilon$.    By a classical result we
have that $\phi_t*\bg \rightarrow \bg$ uniformly, and so
by~\eqref{eqn:elliptic} for all $t$
sufficiently small, 
\[ \|\phi_t*\bg -\bg\|_{L^p_W(\Omega)} \leq
\|\phi_t*\bg -\bg\|_{L^p(v,\Omega)} < \epsilon. \]
Therefore, by~\eqref{eqn:smooth-convo1} we have that
\begin{multline*}
\|\phi_t*\vf - \vf\|_{L^p_W(\Omega)}
\leq \|\phi_t*\bg - \bg\|_{L^p_W(\Omega)} +
\|\phi_t*\vf - \phi_t*\bg\|_{L^p_W(\Omega)}
+ \|\vf-\bg\|_{L^p_W(\Omega)} \\
< \epsilon + C\|\vf-\bg\|_{L^p_W(\Omega)} \lesssim \epsilon.
\end{multline*}
\end{proof}

\begin{remark}
In our proof of Theorem~\ref{thm:smooth-convo} the restrictions on
$\phi$ seem artificial when compared to the scalar case, where any
non-negative function $\phi \in C_c^\infty$ can be used.  We need our
restrictions to allow us to approximate $\phi$ by step functions like
$\Phi$.  It is also possible to prove 
inequality~\eqref{eqn:smooth-convo1} by appealing to the bounds for singular
integrals given in~\cite{MR2015733}.  This approach only works for $p>1$,
but does allow for a larger class of functions $\phi$.   Details are
left to the interested reader.    This was the approach we used in an
early version of this paper; we want to thank S.~Treil for suggesting
the idea behind the proof we give above.
\end{remark}

\section{Degenerate Sobolev spaces and $H=W$}
\label{section:H=W}

In this section we define a family of degenerate Sobolev spaces
using the matrix weighted spaces $L^p_W(\Omega)$.  As we noted above, such
spaces have been studied previously; 
here we consider them in the particular cases where $W$ is either an invertible
matrix weight or a matrix $\A_p$ weight.   

Hereafter, let $W\in \Sp_n$ be an invertible matrix
weight and let $v(x)=|W(x)|_\op$ and $w(x)=|W^{-1}(x)|_\op^{-1}$.
For $1\leq p< \infty$, define the degenerate Sobolev space
$\W^{1,p}_{W}(\Omega)$ to be the set of all $f\in
\W_{\loc}^{1,1}(\Omega)$ such that
\[  \|f\|_{\W^{1,p}_{W}(\Omega)}
=\|f\|_{L^p(v,\Omega)}+\|\nabla f\|_{L^p_W(\Omega)} < \infty. 
\]
Viewing this space as a collection of pairs of the form $(f,\nabla f)$, it is clear that we may consider $\W^{1,p}_W(\Omega)$ as a linear subspace of the Banach space $ L^p(v,\Omega)\oplus
L^p_W(\Omega)$:  since $v\in L^1_\loc(\Omega)$,  $L^p(v,\Omega)$ is a Banach
space and by  Lemma~\ref{lemma:banach} so is
$L^p_W(\Omega)$.   Clearly,  $\W^{1,p}_W(\Omega)$ is non-trivial:  for instance, if $f\in C_c^\infty(\Omega)$,
then $f\in \W_W^{1,p}(\Omega)$, since by Proposition~\ref{prop:elliptic},
$$\|f\|_{\W^{1,p}_{W}(\Omega)}\leq (\|f\|_\infty+\|\nabla
f\|_\infty)v(\supp(f))<\infty.$$ 

Matrix weighted Sobolev spaces generalize the scalar weighted Sobolev
spaces:  that is, given a weight $u$, the space $\W^{1,p}(u,\Omega)$
of functions in $\W^{1,1}_{\rm loc}(\Omega)$ such that
$$\|f\|_{\W^{1,p}(u,\Omega)}=\|f\|_{L^p(u,\Omega)}+\|\nabla
f\|_{L^p(u,\Omega)}<\infty.$$
Every matrix weighted space $\W_W^{1,p}(\Omega)$ is nested between two
scalar weighted spaces.  
By Proposition \eqref{prop:elliptic}, we have that
\begin{equation*}
\|f\|_{\W^{1,p}(w,\Omega)}\leq \|f\|_{\W^{1,p}_W(\Omega)}\leq
\|f\|_{\W^{1,p}(v,\Omega)}; 
\end{equation*}
hence, 
 $$\W^{1,p}(v,\Omega)\subset \W_W^{1,p}(\Omega)\subset \W^{1,p}(w,\Omega).$$
In general, these inclusions are proper as the following example shows. 

\begin{example} \label{example:inclusion}
Let $\Omega=(0,1)\times(0,1)$.  Fix $1<p<\infty$ and $\alpha\in
(0,1)$.   Define the matrix weight
$$W(x,y)=\left[\begin{array}{cc} 1 & 0 \\ 0 & x^{-\al}y^{-\al}
\end{array}\right].$$
Then a straightforward calculation shows that $W\in \A_p$ since it is
a diagonal matrix whose entries are the product of scalar $A_1$
weights in each independent variable.   It is also easy to see that
the weights $v$ and $w$ are given by
\[  v(x,y) = x^{-\alpha}y^{-\alpha}, \qquad w(x,y) = 1. \]
Clearly, $v\,w\in L^1(\Omega)$.  Now define two elements of
$\W^{1,1}_{\text{loc}}(\Omega)$: $f(x,y) = cx^{\frac{\al-1}{p}+1}$ and
$g(x,y) = cy^{\frac{\al-1}{p}+1}$, where $c>0$ is chosen so that
$$\nabla f(x,y) = 
\left[\begin{array}{c} x^\frac{\al-1}{p} \\ 0\end{array}\right]
\;\text{ and }\; \nabla g(x,y)=\left[\begin{array}{c} 0 \\
                                   y^\frac{\al-1}{p}\end{array}\right].$$
Since $f,g$ are bounded,  $f\in L^p(v,\Omega)$ and
$g\in L^p(w,\Omega)$.  The gradient of $f$ satisfies
\begin{gather*}
\int_0^1\int_0^1 |W^{1/p}\nabla f|^p_p\,dxdy
=\int_0^1\int_0^1 x^{\al-1}\,dxdy=\frac{1}{\al}<\infty,\\
\int_{0}^1\int_0^{1}|\nabla f|_p^pv\,dxdy
=\int_{0}^1y^{-\al}\;dy\int_0^{1}\frac{1}{x}\,dx=\infty.
\end{gather*}
The opposite holds for $g$: that is
$\|\nabla g\|_{L^p_W(\Omega)}^p =\infty$ and
$\|\nabla g\|_{L^p(w,\Omega)}^p = \frac{1}{\al}$.  Thus,  $f$ belongs to
$\W_W^{1,p}(\Omega)\setminus\W^{1,p}(v,\Omega)$,  while $g$ belongs to
$\W^{1,p}(w,\Omega)\setminus\W^{1,p}_W(\Omega)$.
\end{example}

Essential to our results is the requirement that $\W^{1,p}_W(\Omega)$
be a Banach space.  This is achieved by imposing size conditions on
$w^{-1}$ as the next theorem demonstrates.

\begin{theorem} \label{thm:complete}
Given a domain $\Omega$,  $1 \leq p<\infty$ and an invertible matrix
weight $W$, suppose $w^{-p'/p} \in L^1_{\emph{loc}}(\Omega)$ (if $p=1$,
$w^{-1}\in L^\infty_{\emph{loc}}(\Omega)$).  Then $\W_W^{1,p}(\Omega)$ is a
Banach space.   In particular, this is the case if $1\leq p<\infty$ and
$W\in \A_p$.
\end{theorem}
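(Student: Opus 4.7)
My plan is to show that $\W^{1,p}_W(\Omega)$, viewed as a linear subspace of the Banach space $L^p(v,\Omega)\oplus L^p_W(\Omega)$ via the embedding $f\mapsto(f,\nabla f)$, is closed. Let $\{f_k\}\subset\W^{1,p}_W(\Omega)$ be a Cauchy sequence. By completeness of each factor (Lemma~\ref{lemma:banach}), there exist $f\in L^p(v,\Omega)$ and $\bg\in L^p_W(\Omega)$ with $f_k\to f$ in $L^p(v,\Omega)$ and $\nabla f_k\to\bg$ in $L^p_W(\Omega)$. The task reduces to verifying that $f\in\W^{1,1}_{\loc}(\Omega)$ with $\nabla f=\bg$, from which $f_k\to f$ in $\W^{1,p}_W(\Omega)$ is immediate.

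The crucial step, and the whole reason for the hypothesis on $w^{-1}$, is to upgrade these convergences to $L^1_{\loc}(\Omega)$. Fix a compact $K\subset\Omega$. For $p>1$, H\"older's inequality together with the ellipticity bound $w|\bxi|^p\leq|W^{1/p}\bxi|^p$ from Proposition~\ref{prop:elliptic} gives
\[
\int_K |\nabla f_k-\bg|\,dx
\leq \Big(\int_K |W^{1/p}(\nabla f_k-\bg)|^p\,dx\Big)^{1/p}
\Big(\int_K w^{-p'/p}\,dx\Big)^{1/p'}.
\]
The second factor is finite by hypothesis and the first tends to zero, so $\nabla f_k\to\bg$ in $L^1(K)$. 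An analogous computation for $f_k$ uses $v\geq w$, so that $v^{-p'/p}\leq w^{-p'/p}\in L^1_{\loc}$, giving $f_k\to f$ in $L^1(K)$. When $p=1$ the computation is even simpler: the assumption $w^{-1}\in L^\infty_{\loc}$ directly yields $\int_K|\nabla f_k-\bg|\,dx\leq \|w^{-1}\|_{L^\infty(K)}\|\nabla f_k-\bg\|_{L^1_W(\Omega)}$, with the same argument for $f_k$.

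With $L^1_{\loc}$ convergence in hand, I would pass to the limit in the weak derivative identity $\int_\Omega f_k\,\partial_i\phi\,dx=-\int_\Omega\partial_i f_k\,\phi\,dx$ for each $\phi\in C_c^\infty(\Omega)$, concluding that $\bg$ is the weak gradient of $f$. For the second assertion, Lemma~\ref{lemma:scalar-Ap} yields $w\in A_p$; when $p>1$ this contains $w^{-p'/p}\in L^1_{\loc}$ as a built-in consequence, and when $p=1$ the $A_1$ inequality $\avgint_Q w\leq[w]_{A_1}\essinf_Q w$ forces $\essinf_K w>0$ on compact sets (using local integrability and positivity of $w$), so $w^{-1}\in L^\infty_{\loc}$. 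The chief obstacle is precisely this $L^1_{\loc}$ upgrade: without a lower control on the degeneracy of $W$ through $w$, the weak gradient of the limit function cannot be identified with $\bg$, and completeness of the space could genuinely fail.
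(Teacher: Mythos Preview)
Your proposal is correct and follows essentially the same route as the paper: embed $\W^{1,p}_W(\Omega)$ into $L^p(v,\Omega)\oplus L^p_W(\Omega)$, take a Cauchy sequence, use H\"older with the hypothesis $w^{-p'/p}\in L^1_{\loc}$ (and $v^{-p'/p}\leq w^{-p'/p}$) to pass from matrix-weighted convergence to $L^1_{\loc}$ convergence, and then identify the weak gradient of the limit; the $\A_p$ case is handled identically via Lemma~\ref{lemma:scalar-Ap}. The only cosmetic difference is that the paper first checks finiteness of the integrals $\int U_j\varphi$ and $\int u\,\partial_j\varphi$ for the limits and then bounds their difference, whereas you establish $L^1_{\loc}$ convergence of the sequences directly---the underlying estimate is the same.
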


\begin{proof}
We need to  show that $\W_W^{1,p}(\Omega)$ is a closed subspace of $ L^p(v,\Omega)\oplus L^p_W(\Omega)$.  
Fix a Cauchy sequence $\{u_k\}$ in $\W_W^{1,p}(\Omega)$.   
Then 
there exists $u\in L^p(v,\Omega)$ and $ \bU \in L^p_W(\Omega)$ such that
$u_k\rightarrow u$ in $L^p(v,\Omega)$ and $\nabla u_k\rightarrow \bU$ in 
$L^p_W(\Omega)$.  We will show that $u,\,\bU \in L^1_\loc(\Omega)$ and that 
$\bU=\grad u$ in the sense of distributional derivatives.    Then
$u\in W^{1,1}_\loc(\Omega)$ with $u\in L^p(v,\Omega)$ and $\nabla u\in L^p_W(\Omega)$.  Thus $u$ belongs to $\W_W^{1,p}(\Omega)$.
 
Fix $\vp\in C_c^\infty(\Omega)$; we need to show that 
\[
\int_\Omega U_j\vp \,dx=-\int_\Omega u\partial_j\vp\,dx, \qquad
1\leq j \leq n.
\]
These integrals are finite since $u$ and $\bU$ are locally
integrable.  To see this suppose first that $p>1$.  Let $K=\text{supp}(\varphi) \Subset \Omega$;
then, since
$w^{-p'/p} \in L^1_\loc(\mathbb{R}^n)$, we have that
\begin{multline*}
\int_\Omega |U_j\varphi| \,dx\leq \|\varphi\|_\infty \int_K |U_j|w^{1/p}w^{-1/p}\,dx \\
\leq w^{-p'/p}(K)^{1/p'}\|\varphi\|_\infty\left(\ \int_\Omega |U_j|^pw\,dx\right)^{1/p}
\leq w^{-p'/p}(K)^{1/p'} \|\varphi\|_\infty\|\bU\|_{L^p_W(\Omega)}.
\end{multline*}
We can bound the other integral similarly:  since $w\leq v$ a.e.,
$v^{-p'/p}\in L^1_\loc(\Omega)$ and so
$$\int_\Omega |u\partial_j\varphi |\,dx\leq
v^{-p'/p}(K)^{1/p'}\|\nabla \varphi\|_\infty \|u\|_{L^p(v,\Omega)}.$$
When $p=1$, we can argue similarly, using the fact that 
$v^{-1},\,w^{-1}\in L^\infty_\loc(\Omega)$. 

\medskip

We now show that these two integrals are equal.   
With $\varphi$ and $K$ as before, by the weak differentiability of
each $u_k$ we have that
\begin{multline*}
\bigg|\int_\Omega U_j\vp+u\partial_j\vp\,dx\bigg|
=\bigg|\int_\Omega ( U_j-\partial_ju_k)\vp\,dx+\int_\Omega (u-u_k)\partial_j\vp\,dx\bigg|\\
\leq  w^{-p'/p}(K)^{1/p'}\|\vp\|_\infty\|\bU-\nabla u_k\|_{L^p_W(\Omega)} 
 +  v^{-p'/p}(K)^{1/p'}\|\partial_j\vp\|_\infty\|u_k-u\|_{L^p(v,\Omega)}.\\
\end{multline*}
Both terms on the right go to zero as $k\rightarrow \infty$.  Thus we
have shown that $\bU=\nabla u$ in the sense of distributional
derivatives and so $u\in \W^{1,1}_{\loc}(\Omega)$.   

Finally, note that if $p>1$ and $W\in \A_p$, then by
Lemma~\ref{lemma:scalar-Ap}, $w\in A_p$ and so $w^{-p'/p}\in A_{p'}$
and thus is locally integrable.   When $p=1$, it follows from
the fact that $w\in A_1$ that $w$ is locally
bounded away from zero and so $w^{-1}$ is locally bounded.
This completes
the proof.
\end{proof}

The importance of the matrix $\A_p$ condition is that it lets us
prove, as is the case  in the classical Sobolev spaces, that smooth
functions are dense in $\W_W^{1,p}(\Omega)$.  Define
$\H_W^{1,p}(\Omega)$ to be the closure of
$C^\infty(\Omega)\cap \W_W^{1,p}(\Omega)$ in $\W_W^{1,p}(\Omega)$.

\begin{theorem} \label{thm:H=W}
Given a domain $\Omega$, if $1\leq p<\infty$ and $W\in \A_p$, then
$$\W_W^{1,p}(\Omega)=\H_W^{1,p}(\Omega).$$
\end{theorem}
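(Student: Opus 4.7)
The inclusion $\H_W^{1,p}(\Omega)\subseteq \W_W^{1,p}(\Omega)$ is immediate from Theorem~\ref{thm:complete}, so I will focus on the converse. My plan is to adapt the classical Meyers--Serrin argument based on a partition of unity and mollification, using Theorem~\ref{thm:smooth-convo} in place of the usual maximal-function continuity of approximate identities.

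Fix $u\in \W_W^{1,p}(\Omega)$ and $\epsilon>0$. Choose an open exhaustion $\emptyset=\Omega_0\Subset \Omega_1\Subset\Omega_2\Subset\cdots$ of $\Omega$, set $U_k=\Omega_{k+1}\setminus \overline{\Omega_{k-1}}$ (with $\Omega_{-1}=\emptyset$), and let $\{\psi_k\}\subset C_c^\infty(\Omega)$ be a smooth partition of unity subordinate to $\{U_k\}$. Each localized piece $u\psi_k$ has compact support in $U_k$ and lies in $\W_W^{1,p}(\Omega)$: indeed, $\nabla(u\psi_k)=\psi_k\nabla u+u\nabla\psi_k$, where the first term is controlled by $\|\psi_k\|_\infty\leq 1$ and the second by Remark~\ref{remark:upper-bd}, giving
\[
\|u\nabla\psi_k\|_{L^p_W(\Omega)}^p\leq \|\nabla\psi_k\|_\infty^p\int_{U_k}|u|^p v\,dx<\infty.
\]

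Let $\phi\in C_c^\infty(B(0,1))$ be a non-negative, radially symmetric, decreasing mollifier with $\int\phi=1$, and write $\phi_t(x)=t^{-n}\phi(x/t)$. Since $\supp(u\psi_k)\Subset \Omega$, for each $k$ I can choose $t_k>0$ so small that $\phi_{t_k}*(u\psi_k)\in C_c^\infty(\Omega)$, its support is contained in $\Omega_{k+2}\setminus\overline{\Omega_{k-2}}$, and
\[
\|\phi_{t_k}*(u\psi_k)-u\psi_k\|_{L^p(v,\Omega)}+\|\phi_{t_k}*\nabla(u\psi_k)-\nabla(u\psi_k)\|_{L^p_W(\Omega)}<\frac{\epsilon}{2^k}.
\]
Here the convergence of the first summand is classical scalar theory: $v\in A_p$ by Lemma~\ref{lemma:scalar-Ap}. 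The convergence of the second summand in the matrix-weighted norm is furnished by Theorem~\ref{thm:smooth-convo} applied to the vector-valued function $\nabla(u\psi_k)\in L^p_W(\Omega)$ (extended by zero outside $\Omega$).

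Set $g=\sum_k \phi_{t_k}*(u\psi_k)$. The enlarged supports $\{\Omega_{k+2}\setminus\overline{\Omega_{k-2}}\}$ still form a locally finite cover of $\Omega$, so the sum defining $g$ is locally finite and therefore $g\in C^\infty(\Omega)$; moreover $\nabla g=\sum_k \phi_{t_k}*\nabla(u\psi_k)$. Since $u=\sum_k u\psi_k$ and $\nabla u=\sum_k \nabla(u\psi_k)$ pointwise (again by local finiteness), the triangle inequality and the choice of $t_k$ yield
\[
\|u-g\|_{\W_W^{1,p}(\Omega)}\leq \sum_k \|u\psi_k-\phi_{t_k}*(u\psi_k)\|_{\W_W^{1,p}(\Omega)}<\epsilon.
\]
In particular $g\in C^\infty(\Omega)\cap \W_W^{1,p}(\Omega)$ approximates $u$ to within $\epsilon$, which shows $u\in \H_W^{1,p}(\Omega)$.

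The key obstacle in carrying this program out is the convergence $\phi_t*\bU\to \bU$ in the matrix-weighted norm $L^p_W(\Omega)$ for the gradient piece $\bU=\nabla(u\psi_k)$. In the scalar setting this is standard because the Hardy--Littlewood maximal operator is bounded on $L^p(w)$ when $w\in A_p$, but no such maximal inequality is available in the matrix case. This is precisely the content of Theorem~\ref{thm:smooth-convo}, whose proof substitutes the averaging-operator bound from Proposition~\ref{prop:averaging} for the maximal inequality; all other steps of the proof are routine adaptations of the scalar Meyers--Serrin argument.
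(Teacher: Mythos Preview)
Your proof is correct and follows essentially the same approach as the paper: exhaustion of $\Omega$, partition of unity, localization $u\psi_k$, mollification with small $t_k$, and then Theorem~\ref{thm:smooth-convo} for the gradient piece together with the scalar $A_p$ theory for the $L^p(v)$ piece. The only cosmetic difference is that the paper says the inclusion $\H_W^{1,p}\subset\W_W^{1,p}$ holds ``by definition'' rather than invoking Theorem~\ref{thm:complete}, but your justification is fine.
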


\begin{remark} 
The assumption that $W\in \A_p$ is sharp for the conclusion of
Theorem~\ref{thm:H=W} to hold.  To show this, we sketch ~\cite[Example~3.9]{MR2560047} for the case $p=2$.  There, the authors consider the matrix
$$A=\left(\begin{array}{lc} |x|^{2\gamma} &0\\
\;0& 1
\end{array}\right)
$$
for
$x\in\Omega = [-\frac{1}{3},\frac{1}{3}]\times
[-\frac{1}{3},\frac{1}{3}]$
with $\gamma >0$.  It is clear that $A\in \A_2$ for $0<\gamma<1/2$
while $A\notin\A_2$ for $\gamma>1/2$.  In the latter case, the
function $u(x)=|x|^\alpha$ with
$\alpha \in (\max\{-\frac{1}{2},\frac{1-2\gamma}{2}\},0)$ is shown to
be a member of $\H_A^{1,2}(\Omega)$ while its gradient
$\nabla u = (\alpha |x|^{\alpha-1}x,0)$ is not an
$L^1_{\rm{loc}}(\Omega)$ function and hence
$u\notin \W^{1,2}_A(\Omega)$.
\end{remark}

\begin{proof} 
We will  show that $\W^{1,p}_W(\Omega)\subset \H^{1,p}_W(\Omega)$
since the reverse inclusion holds by definition.  The proof is an
adaption of the classic proof that $H=W$:  see~\cite{MR2424078,MR0164252}.   We will show that given any $f\in
\W^{1,p}_W(\Omega)$ and any $\epsilon>0$, there exists $g\in
C^\infty(\Omega)\cap \W^{1,p}_W(\Omega)$ such that
$\|f-g\|_{\W^{1,p}_W (\Omega)}<\epsilon$.  

For each $j\in \N$, define the bounded sets
\[ \Omega_j = \{ x\in \Omega : |x|<j, \dist(x,\partial\Omega)>1/j
\}. \]
Let $\Omega_0=\Omega_{-1}=\emptyset$ and  define the sets
$A_j = \Omega_{j+1}\setminus \overline{\Omega}_{j-1}$.  These sets are
an open cover of $\Omega$, each $\overline{A_j}$ is compact, and given $x\in \Omega$, $x\in
A_j$ for only a finite number of indices $j$.   We can therefore form
a partition of unity subordinate to this cover:  there exists
$\psi_j\in C_c^\infty(A_j)$ such that for all
$x\in \Omega$, $0\leq \psi_j(x)\leq 1$ and
\[ \sum_{j=1}^\infty \psi_j(x) = 1. \]
Since $f\in \W^{1,1}_\loc(\Omega)$,
$\psi_j f \in \W^{1,1}_\loc(\Omega)$.  Furthermore, since
$\grad(\psi_j f) = \psi_j \grad f + f\grad \psi_j$ a.e. in $\Omega$
(see \cite[Section~7.3]{MR1814364}), we have that
\begin{multline*}
 |W^{1/p} \grad(\psi_j f)| \leq |\psi_j||W^{1/p} \grad f| +
|f||W^{1/p} \grad \psi_j| \\
\leq 
\|\psi_j\|_\infty|W^{1/p} \grad f| + \|\nabla\psi_j\|_\infty
|f|v^{1/p}, 
\end{multline*}
and so $\psi_j f \in \W^{1,p}_W(\Omega)$.  

Fix a non-negative, radially symmetric and decreasing function 
$\phi \in C_c^\infty(B(0,1))$ with $\int \phi\,dx=1$.   Then the
convolution
\[ \phi_t*(\psi_jf)(x) = \int_{A_j} \phi_t(x-y)\psi_j(y)f(y)\,dy \]
is only non-zero if for some $y\in A_j$,  $|x-y|<t$.

Hence, for $j\geq 3$, if we fix $t=t_j$, $0<t_j<(j+1)^{-1}-(j+2)^{-1}$, this will hold only if
$(j+2)^{-1} < \dist(x,\partial\Omega) \leq (j-2)^{-1}$.  Therefore,
\[  \supp(\phi_{t_j}*(\psi_jf)) 
\subset \Omega_{j+2}\setminus \overline{\Omega}_{j-2}
= B_j \Subset \Omega. \]
We will fix the precise value of $t_j$ below.  

Define 
\[ g(x)  = \sum_{j=1}^\infty \phi_{t_j}*(\psi_jf)(x).  \]
Since $\phi$ is smooth, each summand is in $C^\infty(\Omega)$.
Further, given $x\in \Omega$, it is contained in a finite number of the
$B_j$, so only a finite number of terms are non-zero.  Thus the series
converges locally uniformly and $g\in
C^\infty(\Omega)$.    

Finally, fix $\epsilon>0$; we claim that for the appropriate
choice of $t_j$ we have $\|f-g\|_{\W^{1,p}_W(\Omega)}<\epsilon$.
To prove this, we consider each part of the norm separately.   Since
$v\in A_p$, the approximate identity $\{\phi_t\}_{t>0}$ converges in
$L^p(v,\Omega)$.  (See~\cite[Theorem~2.1.4]{turesson00}.)   Therefore, for each $j$ there
exists $t_j$ such that
\[ \|f-g\|_{L^p(v,\Omega)} 
\leq \sum_{j=1}^\infty \|\psi_jf - \phi_{t_j}*(\psi_jf)\|_{L^p(v,\Omega)} 
\leq \sum_{j=1}^\infty \frac{\epsilon}{2^{j+1}} = \frac{\epsilon}{2}. \]

The argument for the second part of the norm is similar.
Since $\psi_jf \in \W^{1,1}_\loc(\Omega)$, $\phi_{t_j} *\grad(\psi_jf)=
\grad(\phi_{t_j} *\psi_j f)$.    Fix $j$; then by
Theorem~\ref{thm:smooth-convo} there exists $t_j$ such that 
\[ \|\grad\big(\psi_jf - \phi_{t_j}*(\psi_jf)\big)\|_{L^p_W(\Omega)} 
 = \|\grad(\psi_jf) - \phi_{t_j}*\grad(\psi_jf)\|_{L^p_W(\Omega)} 
< \frac{\epsilon}{2^{j+1}}.  \]
Therefore,
\[ \|\grad(f-g)\|_{L^p_W(\Omega)}
\leq \sum_{j=1}^\infty \|\grad\big(\psi_jf - \phi_{t_j}*(\psi_jf)\big)\|_{L^p_W(\Omega)}
\leq \sum_{j=1}^\infty \frac{\epsilon}{2^{j+1}} = \frac{\epsilon}{2}. \]
Thus, we have shown that $\|f-g\|_{\W^{1,p}_W(\Omega)}<\epsilon$ and
our proof is complete.
\end{proof}

As a corollary to Theorem~\ref{thm:H=W} we can prove that when
$\Omega=\R^n$, smooth functions of compact support are dense.

\begin{corollary} \label{cor:compact-dense}
If $1\leq p< \infty$ and $W\in \A_p$, then $C_c^\infty(\R^n)$  is
dense in $\W^{1,p}_W(\R^n)$. 
\end{corollary}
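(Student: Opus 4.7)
The plan is to combine Theorem~\ref{thm:H=W} with a standard truncation argument using smooth cutoff functions. By Theorem~\ref{thm:H=W}, the space $C^\infty(\R^n) \cap \W^{1,p}_W(\R^n)$ is dense in $\W^{1,p}_W(\R^n)$, so it suffices to approximate an arbitrary $g \in C^\infty(\R^n) \cap \W^{1,p}_W(\R^n)$ by elements of $C_c^\infty(\R^n)$ in the $\W^{1,p}_W$-norm.

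For each $R > 0$, I would fix a cutoff $\eta_R \in C_c^\infty(\R^n)$ with $0 \leq \eta_R \leq 1$, $\eta_R \equiv 1$ on $B(0,R)$, $\supp(\eta_R) \subset B(0,2R)$, and $\|\nabla \eta_R\|_\infty \leq C/R$. Then $g_R = \eta_R g$ lies in $C_c^\infty(\R^n)$, and the claim is that $\|g - g_R\|_{\W^{1,p}_W(\R^n)} \to 0$ as $R \to \infty$. The $L^p(v)$ part of the norm is handled by dominated convergence: $|g - g_R|^p v = |1-\eta_R|^p |g|^p v \leq |g|^p v \in L^1(\R^n)$ and the integrand tends to $0$ pointwise.

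For the gradient part, write $\nabla(g - g_R) = (1-\eta_R)\nabla g - g\, \nabla \eta_R$. The first term is controlled by $|W^{1/p}(1-\eta_R)\nabla g| \leq |W^{1/p}\nabla g|$ pointwise, so its $L^p_W$ norm vanishes as $R \to \infty$ by dominated convergence. For the second term, Remark~\ref{remark:upper-bd} gives
\[
\int_{\R^n} |W^{1/p} g\, \nabla \eta_R|^p \, dx
\leq \|\nabla \eta_R\|_\infty^p \int_{B(0,2R) \setminus B(0,R)} |g|^p v \, dx
\leq \frac{C^p}{R^p} \int_{B(0,2R) \setminus B(0,R)} |g|^p v \, dx,
\]
and this tends to $0$ because $|g|^p v \in L^1(\R^n)$ forces the tail integrals on $\R^n \setminus B(0,R)$ to vanish (the $1/R^p$ factor is even better than needed).

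There is no real obstacle here; the only subtlety is using Remark~\ref{remark:upper-bd} to pass from the matrix weight bound on $\nabla \eta_R$ to the scalar weight $v$, which lets dominated convergence in the scalar weighted space $L^p(v)$ finish the job. Combining this approximation with the density from Theorem~\ref{thm:H=W} via a triangle-inequality argument yields the conclusion.
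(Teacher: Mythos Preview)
Your proof is correct and follows essentially the same route as the paper: reduce via Theorem~\ref{thm:H=W} to approximating a smooth $h\in \W^{1,p}_W(\R^n)$, multiply by cutoffs $\nu_k$ supported in $B(0,2k)$ with $|\nabla\nu_k|\lesssim 1/k$, and use dominated convergence together with the upper ellipticity bound $|W^{1/p}\bxi|^p\le v|\bxi|^p$ from~\eqref{eqn:elliptic} to control both the $L^p(v)$ and $L^p_W$ pieces. The only cosmetic difference is that the paper packages the gradient estimate as a single dominated-convergence argument (bounding $|W^{1/p}\nabla g_k|^p$ by $|W^{1/p}\nabla h|^p+|h|^pv$) rather than splitting $\nabla(g-g_R)$ into two terms as you do.
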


\begin{proof}
Fix $\epsilon>0$ and $f\in \W_W^{1,p}(\R^n)$.  By Theorem~\ref{thm:H=W} there
exists $h \in C^\infty(\Omega)\cap \W_W^{1,p}(\R^n)$ such that
\[ \|f-h\|_{\W_W^{1,p}(\R^n)} < \epsilon/2. \]
Therefore, to complete the proof we will construct $g\in
C_c^\infty(\R^n)$ such that 
\begin{equation} \label{eqn:smooth-approx}
\|g-h\|_{\W_W^{1,p}(\R^n)} < \epsilon/2. 
\end{equation}

For each $k\geq 2$, let $\nu_k \in C_c^\infty(\R^n)$ be such that
$\supp(\nu_k) \subset B(0,2k)$, $0\leq \nu_k\leq 1$, $\nu_k(x)=1$ for
$x\in B(0,k)$, and $|\grad \nu_k|\lesssim1/k$.  Let $g_k=h\nu_k$.
Then $g_k\in C_c^\infty(\R^n)$, $|g_k|\leq |h|$, and
$g_k\rightarrow h$ pointwise as $k\rightarrow \infty$.  Since
$h\in L^p(v,\R^n)$, by the dominated convergence theorem,
\[ \lim_{k\rightarrow \infty}\|g_k-h\|_{L^p(v,\R^n)} = 0. \]
Similarly, since $\grad g_k= \nu_k\grad h + h \grad \nu_k$, $\grad g_k
\rightarrow \grad h$ as $k\rightarrow \infty$.  Furthermore,
by~\eqref{eqn:elliptic}, 
\begin{multline*}
 |W^{1/p}(x)\grad g_k(x)|^p
\lesssim |\nu_k W^{1/p}(x)\grad h(x)|^p 
+ |h W^{1/p}(x)\grad \nu_k(x)|^p \\
\lesssim |W^{1/p}(x)\grad h(x)|^p + |\grad\nu_k(x)|^p|h(x)|^pv(x)
\lesssim |W^{1/p}(x)\grad h(x)|^p +|h(x)|^pv(x). 
\end{multline*}
Since $h\in \W^{1,p}_W(\R^n)$, the final term is in $L^1(\R^n)$, so
again by the dominated convergence theorem 
\[ \lim_{k\rightarrow \infty}\|\grad g_k-\grad h\|_{L^p_W(\R^n)} = 0. \]
Therefore, for $k$ sufficiently large, if we let $g=g_k$, we get
inquality~\eqref{eqn:smooth-approx} as desired.
\end{proof}

By modifying the proof of Theorem~\ref{thm:H=W} we can also show that
functions that are smooth up the boundary are dense in
$\W^{1,p}_W(\Omega)$ provided $\Omega$ has some boundary regularity.
Given a bounded domain $\Omega$, let $\Sp_W^{1,p}(\Omega)$ denote the
closure of $C^\infty(\overline{\Omega})$ in $\W^{1,p}_W(\Omega)$.

\begin{theorem} \label{thm:bndry-smooth}
Let $\Omega$ be a bounded domain such that $\partial \Omega$ is locally a
Lipschitz graph.  Then for $1\leq p<\infty$ and $W\in \A_p$,
$\Sp_W^{1,p}(\Omega)=\W^{1,p}_W(\Omega)$.  
\end{theorem}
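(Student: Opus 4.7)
The plan is to adapt the proof of Theorem~\ref{thm:H=W} by a partition of unity subordinate to the Lipschitz boundary of $\Omega$, with each boundary piece handled by a translate-then-mollify procedure. The key new ingredient beyond Theorem~\ref{thm:H=W} is the continuity of translations $\tau_h \vf := \vf(\cdot - h)$ on $L^p_W$, which I extract from Lemma~\ref{lemma:convo-averages}. For $\vf \in L^p_W$ (extended by zero outside $\Omega$),
\[
|B(-h,s)|^{-1}(\chi_{B(-h,s)}*\vf)(x) \;=\; \avgint_{B(x+h,s)}\vf(y)\,dy \;\longrightarrow\; \vf(x+h) \;=\; \tau_{-h}\vf(x)
\]
almost everywhere as $s \to 0$ by the Lebesgue differentiation theorem. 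Lemma~\ref{lemma:convo-averages} bounds the left-hand side in $L^p_W$ uniformly in $h$ and $s$, so Fatou's lemma yields $\|\tau_h \vf\|_{L^p_W} \leq C[W]_{\A_p}^{1/p}\|\vf\|_{L^p_W}$ for every $h$. Since translation is continuous on $C_c^\infty(\mathbb{R}^n)$ in $L^p_W$ (via Remark~\ref{remark:upper-bd} together with $v \in L^1_\loc$) and $C_c^\infty$ is dense in $L^p_W$ by Proposition~\ref{prop:smoothdense}, the standard density argument gives $\tau_h \vf \to \vf$ in $L^p_W$ as $h \to 0$.

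Next, cover $\overline\Omega$ by finitely many open balls $B_0, B_1, \dots, B_N$ with $B_0 \Subset \Omega$, and for $i \geq 1$ choose $B_i$ centered at a boundary point so that, after a rigid motion, $\partial\Omega \cap B_i$ is the graph of a Lipschitz function $\gamma_i$ of constant $L$ and $\Omega \cap B_i = \{x_n > \gamma_i(x')\}\cap B_i$. Let $\{\psi_i\}_{i=0}^N$ be a smooth partition of unity subordinate to this cover. Since $\psi_i f \in \W^{1,p}_W(\Omega)$ for each $i$ and $f = \sum_i \psi_i f$, it suffices to approximate each $\psi_i f$ within $\epsilon/(N+1)$ in $\W^{1,p}_W(\Omega)$ by a function in $C^\infty(\overline\Omega)$. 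For $i=0$, the support of $\psi_0 f$ is compact in $\Omega$, so the construction in the proof of Theorem~\ref{thm:H=W} produces an approximant in $C_c^\infty(\Omega) \subset C^\infty(\overline\Omega)$ (only finitely many terms of the partition used there contribute when the input is compactly supported). For $i \geq 1$, set $g = \psi_i f$ extended by zero and $g_t(x) = g(x - t e_n)$, with $e_n$ the inward normal direction in local coordinates. A short geometric calculation using the Lipschitz bound shows $\dist(\supp g_t, \partial\Omega \cap B_i) \geq t/\sqrt{1+L^2}$ for all small $t>0$; combined with $\supp \psi_i \Subset B_i$, this yields $\dist(\supp g_t, \partial\Omega)>0$, so mollifying with $\phi_s$ from Theorem~\ref{thm:smooth-convo} at a sufficiently small scale $s$ produces $\phi_s * g_t \in C_c^\infty(\Omega) \subset C^\infty(\overline\Omega)$.

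The error splits as
\[
\|\psi_i f - \phi_s * g_t\|_{\W^{1,p}_W(\Omega)} \leq \|g - g_t\|_{\W^{1,p}_W(\Omega)} + \|g_t - \phi_s * g_t\|_{\W^{1,p}_W(\Omega)}.
\]
Because translation commutes with distributional differentiation, $\grad g_t = \tau_{te_n}\grad g$; hence the first term equals $\|g - \tau_{te_n} g\|_{L^p(v,\Omega)} + \|\grad g - \tau_{te_n}\grad g\|_{L^p_W(\Omega)}$, which tends to $0$ as $t \to 0$ by the first paragraph (the $L^p_W$ part directly, the $L^p(v)$ part by the analogous scalar statement, for instance by applying the same argument with $d=1$ to the scalar $A_p$ weight $v$ given by Lemma~\ref{lemma:scalar-Ap}). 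For fixed $t$, since $\phi_s$ commutes with $\grad$, the second term tends to $0$ as $s \to 0$ by Theorem~\ref{thm:smooth-convo} applied to $g_t$ and to $\grad g_t$. A diagonal choice---first $t$ small, then $s$---drives the bound below $\epsilon/(N+1)$, and summing over $i$ concludes. The principal obstacle is the first paragraph: unlike the scalar case, $W(\cdot+h)$ admits no useful pointwise comparison with $W(\cdot)$, so uniform boundedness of $\tau_h$ on $L^p_W$ must be recovered indirectly through Lemma~\ref{lemma:convo-averages} and Fatou.
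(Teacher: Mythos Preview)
Your proof is correct, and rests on the same underlying tool as the paper's (Lemma~\ref{lemma:convo-averages} for balls not centered at the origin), but the organization differs.  The paper does not separate translation from mollification: it observes that for fixed $\ba$,
\[
\phi_t*f(x+\ba t)=\int_\Omega t^{-n}\phi\!\left(\tfrac{x-y}{t}+\ba\right)f(y)\,dy = \psi_t*f(x),
\]
where $\psi_t$ is a non-negative radially decreasing bump centered at $-\ba t$.  It then approximates $\psi_t$ from below by step functions $\sum a_k|B_k|^{-1}\chi_{B_k}$ built from nested balls centered at $-\ba t$ and reruns the proof of Theorem~\ref{thm:smooth-convo} verbatim to obtain the uniform bound $\sup_{t>0}\|\phi_t*f(\cdot+\ba t)\|_{L^p_W}\lesssim\|f\|_{L^p_W}$; convergence then follows from density exactly as in~\eqref{eqn:smooth-convo2}.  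Your route instead isolates translation continuity on $L^p_W$ as a standalone fact---obtained by sending the radius of an off-center ball average to zero and invoking Fatou---and then applies Theorem~\ref{thm:smooth-convo} unchanged to the already-translated function.  The trade-off: the paper handles a single one-parameter family and needs only one limiting argument, while your decomposition requires a two-step (translate, then mollify) diagonal choice but yields the auxiliary statement $\|\tau_h\|_{L^p_W\to L^p_W}\le C[W]_{\A_p}^{1/p}$ and $\tau_h\to\mathrm{id}$ strongly, which is of independent interest and not made explicit in the paper.
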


\begin{proof}
The proof of this result in the classical case (see, for instance,
Evans and Gariepy~\cite[Section~4.2]{MR1158660}) is an adaptation of
the proof that $H=W$.  In our setting, we can use the same
modifications to adapt the proof of Theorem~\ref{thm:H=W} and we leave
the details to the reader.  Here, we note that the heart
of the changes is proving that, given a fixed vector $\ba \in \R^n$,  $\phi_t*f(\cdot
+\ba t)$ converges to $f$ in
$\W^{1,p}_W(\Omega)$.  To modify the argument given above, it will suffice to
prove that 
\[ \sup_{t>0} \|\phi_t*f(\cdot+\ba t)\|_{L^p_W(\Omega)} \leq
C\|f\|_{L^p_W(\Omega)}. \]
But if we fix $t>0$, 
\[ \phi_t*f(x+\ba t) = \int_\Omega t^{-n}
\phi\left(\frac{x-y}{t}+\ba\right) f(y)\,dy = \psi*f(x), \]
where $\psi$ is a positive, radially decreasing function centered at
$\ba$.   Such $\psi$ can be approximated by functions of the form
\[ \Phi(x) = \sum_{k=1}^\infty a_k|B_k|^{-1}\chi_{B_k}(x), \]
where the balls $B_k$ are nested and centered at $\ba$.  With such
functions $\Phi$, the proof of Theorem~\ref{thm:smooth-convo}
goes through without change.  
\end{proof}

\section{Degenerate $p$-Laplacian equations}
\label{section:degenerate}

We now consider the applications of matrix weighted Sobolev spaces 
to the study of degenerate elliptic equations.  In this section we
generalize some results from~\cite{MR3011287} for
arbitrary matrix weights; in Section~\ref{section:balance} we will
apply these results in the special case when we assume the
matrix $\A_p$ condition.   Throughout this section, let $\Omega$ be a
bounded domain in $\R^n$.   

In~\cite{MR3011287} the authors studied the partial regularity of
solutions to the divergence form degenerate $p$-Laplacian
\begin{equation}\label{ourPDE} 
\Lap_{A,p} u=\Div(\langle A\nabla u,\nabla u\rangle^{\frac{p-2}{2}}A\nabla u)=0,
\end{equation}
where $1<p<\infty$ and $A\in \Sp_n$  satisfies the
ellipticity condition
$$w(x)^{2/p}|\boldsymbol{\xi}|^2\leq \langle
A\boldsymbol{\xi},\boldsymbol{\xi}\rangle\leq
v(x)^{2/p}|\boldsymbol{\xi}|^2,$$
where the weights $v,\,w$ are assumed to be locally integrable.  In
the  terminology introduced above, we have that $A$ is a
matrix weight.     We want to recast this equation so that our results
can be restated in terms of the degenerate Sobolev spaces defined
in Section~\ref{section:H=W}.    If we define the matrix weight $W$ by $A^{1/2}=W^{1/p}$, then
\eqref{ourPDE}
becomes
\begin{equation} \label{eqn:new-PDE}
\Lap_{W,p} u=\Div(|W^{1/p}\nabla u|^{p-2}W^{2/p}\nabla u)=0
\end{equation}
with ellipticity condition
\begin{equation} \label{ellipcond} 
w(x)|\boldsymbol{\xi}|^p\leq |W^{1/p}(x)\boldsymbol{\xi}|^p\leq v(x)|\boldsymbol{\xi}|^p.
\end{equation}
Hereafter, we will assume that $W$ is an invertible matrix weight and
we will generally assume that $v=|W|_\op$ and $w=|W^{-1}|_\op^{-1}$.
Since $v$ and $w$ are the largest and smallest eigenvalues of $W$,
this choice is in some sense optimal.

As in~\cite{MR3011287}, we  introduce the notion of
$p$-admissible pairs of weights on $\Omega$.  Following the convention
introduced above in Remarks~\ref{remark:Ap-domain}
and~\ref{remark:matrix-Ap-domain}, given a pair of scalar weights
$(w,v)$ on $\Omega$, we will assume that they are in fact defined and
locally integrable on a larger domain $\Omega'$ such that
$\Omega \Subset \Omega'$ and that balls and cubes are interchangeable
in the definition of doubling and $A_p$ weights on $\Omega$.  Note
that as a consequence of this assumption, $v,\,w \in L^1(\Omega)$.

\begin{definition} \label{defn:padm}Given $1<p<\infty$, a domain
  $\Omega$ and a pair of weights $(w,v)$, we say that the pair is
  $p$-adimissible on $\Omega$ if:
\begin{enumerate}
\item $w\leq v$;
\item $w\in A_p$;
\item $v$ is doubling;
\item $(w,v)$ satisfies the balance condition:  there exists $q>p$
  such that for every ball $B\subset \Omega$ and $0<r<1$, 
\begin{equation} \label{balance}
r\Big(\frac{v(rB)}{v(B)}\Big)^{1/q}\lesssim \Big(\frac{w(rB)}{w(B)}\Big)^{1/p}.
\end{equation}
\end{enumerate}
\end{definition}

If $W$ is an invertible matrix weight, then by Proposition~\textup{\ref{prop:elliptic}} we have that $(1)$ holds.  Since
$p>1$, if we further assume $W\in \A_p$, then $(2)$ and $(3)$ hold by
Lemma~\textup{\ref{lemma:scalar-Ap}}.   Therefore, the critical condition is
the balance condition $(4)$.  We will consider this assumption more
carefully in Section~\textup{\ref{section:balance}}.

Given the assumption that $(w,v)$ are a $p$-admissible pair, then by
Theorem~\ref{thm:complete} we have $\W_W^{1,p}(\Omega)$ is complete and
we can take the solution space of~\eqref{eqn:new-PDE} to be $\Sp_W^{1,p}(\Omega)$.  More
precisely (again following~\cite{MR3011287}) we define a weak
solution of~\eqref{eqn:new-PDE} to be a function $u\in
\Sp_W^{1,p}(\Omega)$ such that for all $\vp \in C_c^\infty(\Omega)$, 
$$\int_\Omega |W^{1/p}\nabla u|^{p-2}\langle W^{1/p}\nabla
u,W^{1/p}\nabla \vp\rangle\,dx=0.$$ 
Note that if we only assume $u\in \W_W^{1,p}(\Omega)$, then the above
integral is well defined even if $\vp\in \W_W^{1,p}(\Omega)$.  Indeed,
if we apply the Cauchy-Schwarz inequality and then H\"older's
inequality, we get
\begin{multline*}
\int_\Omega \big||W^{1/p}\nabla u|^{p-2}\langle W^{1/p}\nabla u,W^{1/p}\nabla \vp\rangle\big|\,dx \\
\leq\int_\Omega |W^{1/p}\nabla u|^{p-1}|W^{1/p}\nabla \vp|\,dx
\leq \|\nabla u\|_{L^p_W(\Omega)}^{p-1}\|\nabla \vp\|_{L^p_W(\Omega)}.
\end{multline*}
As a consequence, we could define weak solutions $u$ to be functions
in $\W_W^{1,p}(\Omega)$.  However, to prove the results given below,
we need the stronger assumption that $u\in \Sp_W^{1,p}(\Omega)$.
However, if $W\in \A_p$ and $\Omega$ has Lipschitz boundary (e.g., if
$\Omega$ is a ball), then by Theorem~\ref{thm:bndry-smooth} we have
that $\Sp_W^{1,p}(\Omega)=\H_W^{1,p}(\Omega)=\W_W^{1,p}(\Omega)$, so
we can take our solution space to be either of these ``larger''
spaces.  We will use this fact in Section~\ref{section:balance} below.

The following results are from  \cite{MR3011287}. For brevity, in
the next result let
$\Sp_{W,0}^{1,p}(\Omega)$  denote the closure of $C_c^\infty(\Omega)$
in $\W_W^{1,p}(\Omega)$. 

\begin{theorem}[\cite{MR3011287} Theorem~3.11] \label{exist} 
Let  $1<p<\infty$ and let $W\in \Sp_n$ be an invertible matrix such that
  $(w,v)$ is a $p$-admissible pair.  Then given any
  $\psi\in \Sp_W^{1,p}(\Omega)$ there exists a weak solution
  $u\in \Sp_W^{1,p}(\Omega)$ of $\Lap_{W,p}u=0$ such that
  $u-\psi\in \Sp_{W,0}^{1,p}(\Omega)$.
\end{theorem}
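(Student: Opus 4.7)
The strategy is the direct method of the calculus of variations, which is the natural approach once the right function-space machinery (Theorems~\ref{thm:complete} and~\ref{thm:bndry-smooth}, together with the $p$-admissibility of $(w,v)$) is in place. Consider the convex functional
\[
J(u) = \frac{1}{p}\int_\Omega |W^{1/p}(x)\nabla u(x)|^p\,dx
\]
on the affine class $\A = \psi + \Sp_{W,0}^{1,p}(\Omega) \subset \Sp_W^{1,p}(\Omega)$. A minimizer of $J$ over $\A$ will automatically satisfy the Euler-Lagrange equation obtained by testing $\frac{d}{dt}J(u+t\vp)|_{t=0}=0$ against $\vp\in C_c^\infty(\Omega)$, which is precisely the weak form of $\Lap_{W,p}u = 0$.

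The plan is as follows. First, I would take a minimizing sequence $\{u_k\}\subset \A$ and establish a uniform bound $\|u_k\|_{\W_W^{1,p}(\Omega)}\leq C$. Since $u_k-\psi\in \Sp_{W,0}^{1,p}(\Omega)$, a weighted Poincar\'e inequality of the form $\|u-\psi\|_{L^p(v,\Omega)} \lesssim \|\nabla(u-\psi)\|_{L^p_W(\Omega)}$ for functions vanishing on $\partial\Omega$ would let me pass from the boundedness of $J(u_k)$ to a bound on $\|u_k-\psi\|_{L^p(v,\Omega)}$, and thus on $\|u_k\|_{\W_W^{1,p}(\Omega)}$. This Poincar\'e-type estimate is exactly what the balance condition~\eqref{balance} is designed to provide, since the gain $q>p$ gives a Sobolev-Poincar\'e embedding strong enough to control the $L^p(v)$ norm.

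Next, I would extract a weakly convergent subsequence. For $1<p<\infty$, the space $L^p(v,\Omega)\oplus L^p_W(\Omega)$ is reflexive (each factor being isometric to a subspace of an unweighted $L^p$ via multiplication by $v^{1/p}$ and $W^{1/p}$, with invertibility of $W$ ensuring the latter map is an isometric embedding). By Theorem~\ref{thm:complete}, $\W_W^{1,p}(\Omega)$ sits as a closed, hence weakly closed, subspace, so a subsequence $u_k$ converges weakly to some $u\in \W_W^{1,p}(\Omega)$. Because $\Sp_{W,0}^{1,p}(\Omega)$ is norm-closed and convex, it is weakly closed, so $u-\psi\in \Sp_{W,0}^{1,p}(\Omega)$ and in particular $u\in\Sp_W^{1,p}(\Omega)$. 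The convexity of $\xi\mapsto |W^{1/p}(x)\xi|^p$ in $\xi$ makes $J$ convex and norm-continuous on $\W_W^{1,p}(\Omega)$, hence weakly lower semicontinuous, so $J(u)\leq \liminf_k J(u_k) = \inf_\A J$, exhibiting $u$ as a minimizer and therefore a weak solution.

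The main obstacle I anticipate is step one: establishing coercivity of $J$ on the affine class. The scalar $A_p$ membership of $w$ alone does not relate $v$ to $w$ across balls, and without such a relation one cannot pass from control of $|W^{1/p}\nabla u|$ to control of $|u|v^{1/p}$. The role of the balance condition is exactly to produce the two-weight Sobolev-Poincar\'e inequality needed here; this is the key analytic input, and once it is available, the rest of the variational argument runs through in a standard fashion. One should also verify that $\inf_\A J<\infty$, but this is immediate since $\psi\in\A$ and $J(\psi)\leq \|\nabla\psi\|_{L^p_W(\Omega)}^p/p<\infty$ by the definition of $\Sp_W^{1,p}(\Omega)$.
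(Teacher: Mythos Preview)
The paper does not supply its own proof of this theorem; it is quoted from \cite[Theorem~3.11]{MR3011287}, with only a remark afterwards explaining why the global Sobolev inequality hypothesis present in the original statement is automatically satisfied here (since the operator is built from the ordinary gradient rather than H\"ormander vector fields). Your variational argument via the direct method is the standard route to existence for degenerate $p$-Laplacian equations and is essentially what the cited reference does; your identification of the balance condition~\eqref{balance} as the source of the two-weight Sobolev--Poincar\'e inequality needed for coercivity is exactly the point being made in that remark.
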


\begin{remark}
In the original statement of this result  there is an
assumption that a global Sobolev inequality holds.  This was necessary
there because they were considering more general equations defined
with respect to H\"ormander vector fields.  Since~\eqref{eqn:new-PDE}
is defined with respect to the gradient, this assumption always
holds.  See the discussion in~\cite[Section~3]{MR3011287}.
\end{remark}

\begin{theorem}[\cite{MR3011287} Theorem 3.16] \label{bound} 
  If $B$ is a ball and $u\in \Sp_W^{1,p}(2B)$ is a weak solution of
  $\Lap_{W,p}u=0$, then $u$ is bounded on $B$.
\end{theorem}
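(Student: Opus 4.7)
The plan is to prove this by a Moser iteration tailored to the matrix weighted setting, with the balance condition in Definition~\ref{defn:padm}(4) supplying the two-weight Sobolev inequality that drives the gain of integrability at each step.

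Step 1 (Caccioppoli). Fix $k\ge 0$ and set $\bar u_M=\min((u-k)_+,M)$ for a truncation $M>0$; working with $\bar u_M$ rather than $(u-k)_+$ ensures the test function below is in $\Sp_W^{1,p}(2B)$, and we pass to the limit $M\to\infty$ by monotone convergence. For $\eta\in C_c^\infty(2B)$ nonnegative and $\beta\ge p$, I would take $\vp=\eta^p\bar u_M^{\beta-p+1}$ (Lipschitz composition of $u$, compactly supported in $2B$), insert it into the weak formulation of $\Lap_{W,p}u=0$, expand $\grad\vp$, bound the cross term via $\langle W^{2/p}\grad u,\grad \eta\rangle\le|W^{1/p}\grad u||W^{1/p}\grad\eta|$, and apply Young's inequality with a small constant to absorb the main term into the left side. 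This yields
\[
\int \eta^p|W^{1/p}\grad\bar u_M|^p\bar u_M^{\beta-p}\,dx\;\lesssim\;\beta^p\int|W^{1/p}\grad\eta|^p\bar u_M^{\beta}\,dx,
\]
and the ellipticity~\eqref{ellipcond} converts this into the scalar Caccioppoli estimate with $w$ on the left and $v$ on the right.

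Step 2 (Sobolev and gain). The balance condition~\eqref{balance}, by the weighted Sobolev theory of Chanillo--Wheeden and Sawyer--Wheeden used in~\cite{MR3011287}, implies a two-weight Sobolev inequality
\[
\bigg(\frac{1}{v(B_r)}\int_{B_r}|h|^q v\,dx\bigg)^{\!1/q}\;\lesssim\;r\bigg(\frac{1}{w(B_r)}\int_{B_r}|\grad h|^p w\,dx\bigg)^{\!1/p}
\]
for $h\in\Sp_{W,0}^{1,p}(B_r)$. Applying this to $h=\eta\bar u^{\beta/p}$, expanding the gradient, and combining with Step~1 (and using $w\le v$ to dominate the lower-order term), I would obtain the reverse-H\"older-type inequality
\[
\bigg(\int_{B_\rho}\bar u^{\kappa\beta}v\,dx\bigg)^{\!1/(\kappa\beta)}\;\le\;\bigl[C\beta(r-\rho)^{-1}\bigr]^{C/\beta}\bigg(\int_{B_r}\bar u^{\beta}v\,dx\bigg)^{\!1/\beta}
\]
for concentric balls $B_\rho\Subset B_r\subseteq 2B$, where $\kappa=q/p>1$ comes from the balance condition.

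Step 3 (iteration and conclusion). I would set $\beta_i=p\kappa^i$ and take nested radii $r_i=r(1+2^{-i})$ shrinking from $2r$ to $r$. Iterating Step~2 and using the convergence of $\sum_i \kappa^{-i}(i+\log\beta_i)$, the product of constants stays finite, so letting $i\to\infty$ gives
\[
\|(u-k)_+\|_{L^\infty(B)}\;\lesssim\;\bigg(\int_{2B}(u-k)_+^{p}\,v\,dx\bigg)^{\!1/p}.
\]
Taking $k=0$ bounds $u^+$, and the identical argument applied to $-u$ (which also weakly solves $\Lap_{W,p}=0$, by oddness of the nonlinearity in $\grad u$) bounds $u^-$, yielding $u\in L^\infty(B)$.

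The principal obstacle is the two-weight Sobolev inequality invoked in Step~2: it is the genuinely nontrivial input, depending on $w\in A_p$, the doubling of $v$, and, crucially, the balance condition linking the two weights. Once that is in hand, the matrix structure enters only through the elementary ellipticity~\eqref{ellipcond}, and the remainder is a careful but standard Moser iteration; the only additional technicality is justifying the admissibility of $\vp=\eta^p\bar u_M^{\beta-p+1}$, which the truncation $\bar u_M$ together with Theorem~\ref{thm:bndry-smooth} handles.
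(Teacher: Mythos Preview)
The paper does not give its own proof of this statement: Theorem~\ref{bound} is quoted directly from \cite[Theorem~3.16]{MR3011287} without argument. Your Moser iteration outline is correct and is essentially the approach taken in that reference, where the two-weight Sobolev inequality following from the balance condition is the key analytic input driving the gain in integrability.
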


\begin{theorem}[\cite{MR3011287} Theorem 3.17] \label{Harnack} 
  If $B$ is a ball and $u\in \Sp_W^{1,p}(2B)$ is a non-negative weak
  solution of $\Lap_{W,p}u=0$, then $u$ satisfies
  the following Harnack inequality:
\begin{equation}\label{eqn:Harnack} 
\sup_B u\leq \exp\big(C\mu(B)^{1/p}\big)\inf_B u
\end{equation}
where $\mu(B)=\frac{v(B)}{w(B)}$.
\end{theorem}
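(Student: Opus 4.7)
The proof would follow the classical Moser iteration scheme, carefully adapted to the two-weight degenerate setting provided by the matrix weight $W$ and its associated scalars $v,w$. First I would establish a weighted Caccioppoli-type estimate: for $\beta \neq 0$, test the weak formulation with $\varphi = u^\beta \eta^p$ where $\eta \in C_c^\infty(2B)$ is a standard cutoff, approximating after first replacing $u$ by $u+\varepsilon$ to guarantee $\varphi \in \Sp_W^{1,p}$. Expanding $\nabla \varphi$ and using the ellipticity~\eqref{ellipcond} together with Young's inequality produces, for $\gamma = (\beta+p-1)/p \neq 0$,
\[
\int_{2B} |W^{1/p}\nabla u^\gamma|^p \eta^p\,dx \;\lesssim\; \frac{|\gamma|^p}{|\beta|}\int_{2B} u^{p\gamma}|\nabla \eta|^p v\,dx,
\]
where the right-hand side is controlled by the upper weight $v$ via~\eqref{ellipcond}.

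The second ingredient is a two-weight Sobolev-Poincaré inequality on balls, extracted from the $p$-admissibility of $(w,v)$: the balance condition~\eqref{balance}, together with the doubling of $v$ and the $A_p$-condition on $w$, yields an inequality of the form
\[
\left(\frac{1}{v(B)}\int_B |f|^{q}\,v\,dx\right)^{1/q} \;\lesssim\; r(B)\left(\frac{1}{w(B)}\int_B |\nabla f|^p w\,dx\right)^{1/p}
\]
for $f \in C_c^\infty(B)$ (or $f \in \Sp_{W,0}^{1,p}(B)$ after density via Theorem~\ref{thm:H=W}). Combining this with the Caccioppoli inequality and applying the bootstrap $\gamma \mapsto \gamma q/p$ gives a reverse Hölder chain; iterating over a shrinking sequence of radii and summing a geometric series in $(q/p)^{-k}$ yields, for any $s>0$,
\[
\sup_B u \;\lesssim\; C(s)\,\Big(\mu(B)\Big)^{\alpha(s)}\left(\frac{1}{v(2B)}\int_{2B} u^s\, v\,dx\right)^{1/s},
\]
with the factor $\mu(B)=v(B)/w(B)$ arising from the mismatch between the weight $v$ on the Sobolev side and $w$ on the Dirichlet side of the two-weight inequality.

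To produce the matching lower bound, I would test the equation with $\varphi = u^{1-p}\eta^p$ (again with $u+\varepsilon$) to obtain a logarithmic estimate: $\log u$ satisfies
\[
\frac{1}{v(B)}\int_B |W^{1/p}\nabla \log u|^p\,v\,dx \;\lesssim\; \frac{1}{r(B)^p}\cdot\frac{w(B)}{v(B)}\cdot\text{(cutoff constants)},
\]
which through the weighted Poincaré inequality derived from~\eqref{balance} places $\log u$ in a BMO-type class on $B$ with norm bounded by $C\,\mu(B)^{1/p}$. A John–Nirenberg type lemma valid in the doubling $v\,dx$ setting then produces exponential self-improvement, so that for some $s_0>0$
\[
\left(\frac{1}{v(2B)}\int_{2B}u^{s_0}v\,dx\right)^{1/s_0}\left(\frac{1}{v(2B)}\int_{2B}u^{-s_0}v\,dx\right)^{1/s_0}\leq \exp\!\big(C\mu(B)^{1/p}\big).
\]
Applying the sup-estimate of the previous paragraph to $u$ (with $s=s_0$) and its analogue for $u^{-1}$ (valid because $u^{-1}$ is a subsolution after applying the iteration to negative exponents) and multiplying gives~\eqref{eqn:Harnack}.

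The main obstacle in this plan is the derivation of the two-weight Sobolev-Poincaré inequality from the balance condition and verifying that it holds for the solution space $\Sp_W^{1,p}$ (as opposed to only $C_c^\infty$); this is exactly where the $A_p$-hypothesis on $w$, the doubling of $v$, and Theorem~\ref{thm:H=W} interact, and it determines the precise form of the factor $\mu(B)^{1/p}$ that appears in the exponential. Once this inequality is in hand, the Moser iteration and the passage from BMO to Harnack are routine, if technically involved.
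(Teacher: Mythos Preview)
The paper does not actually prove this statement: Theorem~\ref{Harnack} is quoted verbatim from \cite{MR3011287} (Theorem~3.17 there) as one of three imported results, with no argument given. So there is no ``paper's own proof'' to compare against; your sketch stands on its own as a reconstruction of the argument one would expect to find in the cited reference.

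As such a reconstruction, your outline is broadly the right one---Caccioppoli, two-weight Sobolev--Poincar\'e from the balance condition~\eqref{balance}, Moser iteration for the sup-estimate, a logarithmic Caccioppoli plus John--Nirenberg for the crossover, and then multiplication. Two small corrections are worth flagging. First, in this section of the paper the standing hypothesis is only that $(w,v)$ is a $p$-admissible pair, \emph{not} that $W\in\A_p$; so your appeal to Theorem~\ref{thm:H=W} for density is out of place. It is also unnecessary: the solution space $\Sp_W^{1,p}$ is \emph{by definition} the closure of $C^\infty(\overline{\Omega})$, so smooth approximation is built in and the Sobolev--Poincar\'e inequality passes to $\Sp_W^{1,p}$ directly. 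Second, in your logarithmic estimate the integrand $|W^{1/p}\nabla\log u|^p\,v$ is garbled---the $v$ should not be there (or equivalently the integrand should be $|\nabla\log u|^p\,w$ after applying the lower ellipticity bound); once corrected, the Poincar\'e inequality indeed produces a BMO bound of order $\mu(B)^{1/p}$, and the rest of your argument goes through.
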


To state our next result, we introduce an auxiliary operator:  a
weighted maximal operator.   Given scalar weights $(w,v)$,
for $x\in \Omega$ define the maximal operator
\[ M_\Omega(w,v)(x) = \sup_{B} \frac{v(B)}{w(B)}, \]
where the supremum is taken over all balls $B\subset \Omega$ centered
at $x$.   Since $v\in L^1(\Omega)$, 
it follows from a Besicovitch covering lemma argument (cf. Journ\'e
\cite[Chapter~1]{MR706075}) that 
\[ w(\{ x \in \Omega : M_\Omega(w,v)(x)>\lambda\}) \leq
\frac{C_n}{\lambda}v(\Omega). \]
In particular, the set $\{ x\in \Omega :
M_\Omega(w,v)(x) = \infty \}$ has measure zero.

\begin{theorem} \label{thm:partial-reg}
Given $1<p<\infty$ and an invertible matrix weight $W\in \Sp_n$,
suppose that $(w,v)$ is a $p$-admissible pair.   If $u\in
\Sp_W^{1,p}(\Omega)$ is a weak solution of $\Lap_{W,p}u=0$,   then
$u$ is continuous on the set 
$$ F_\Omega(w,v) = \{x\in \Omega:M_\Omega(w,v)(x)<\infty\}.$$
In particular, $u$ is continuous almost everywhere in $\Omega$.
\end{theorem}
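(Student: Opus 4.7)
The plan is to prove continuity at each $x_0\in F_\Omega(w,v)$ by establishing a geometric decay for the oscillation of $u$ on balls centered at $x_0$, via the classical oscillation reduction trick driven by the Harnack inequality in Theorem~\ref{Harnack}. The ``in particular'' clause is then immediate: the weak-type bound for $M_\Omega(w,v)$ noted just before the statement implies $M_\Omega(w,v)(x)<\infty$ for a.e.\ $x\in\Omega$, so $F_\Omega(w,v)$ has full measure.

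Fix $x_0\in F_\Omega(w,v)$, set $M:=M_\Omega(w,v)(x_0)<\infty$, and pick $r_0>0$ with $B(x_0,2r_0)\Subset\Omega$. For any ball $B=B(x_0,r)$ with $r\leq r_0$, let $M_2:=\sup_{2B} u$ and $m_2:=\inf_{2B} u$, both finite by Theorem~\ref{bound}, and let $M_1:=\sup_B u$, $m_1:=\inf_B u$. Equation~\eqref{eqn:new-PDE} is invariant under the substitutions $u\mapsto u-c$ and $u\mapsto c-u$: adding a constant leaves $\nabla u$ untouched, while flipping the sign produces an overall minus that cancels between the factor $|W^{1/p}\nabla u|^{p-2}W^{2/p}\nabla u$ and the outer divergence. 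Consequently both $M_2-u$ and $u-m_2$ are non-negative weak solutions on $2B$. Applying Theorem~\ref{Harnack} to each on the inner ball $B$ gives
\[ M_2-m_1\leq K(B)(M_2-M_1) \quad\text{and}\quad M_1-m_2\leq K(B)(m_1-m_2), \]
where $K(B)=\exp\bigl(C\mu(B)^{1/p}\bigr)$ and $\mu(B)=v(B)/w(B)$. Adding the two inequalities and rearranging yields the standard oscillation estimate
\[ \osc_B u \leq \frac{K(B)-1}{K(B)+1}\,\osc_{2B} u. \]

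Because $B$ is centered at $x_0$, $\mu(B)\leq M$, so $K(B)\leq K_0:=\exp(CM^{1/p})$ and the contraction factor $\gamma_0:=(K_0-1)/(K_0+1)$ is strictly less than $1$, uniformly in $r\leq r_0$. Iterating on the dyadic sequence $r_k:=r_0 2^{-k}$ produces
\[ \osc_{B(x_0,r_k)} u \leq \gamma_0^{\,k}\,\osc_{B(x_0,r_0)} u \longrightarrow 0 \quad (k\to\infty), \]
which is precisely continuity of $u$ at $x_0$.

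The main technical obstacle is bookkeeping rather than analysis: one must verify that $M_2-u$ and $u-m_2$ are legitimate elements of $\Sp_W^{1,p}(2B)$ to which the hypotheses of Theorems~\ref{bound} and~\ref{Harnack} genuinely apply, and that the restriction of $u\in\Sp_W^{1,p}(\Omega)$ to the sub-ball $2B\Subset\Omega$ still lies in $\Sp_W^{1,p}(2B)$. The former holds because constants are admissible (since $v\in L^1_\loc$) and $\Sp_W^{1,p}(2B)$ is closed under affine operations; the latter follows by restricting approximating functions from $C^\infty(\overline{\Omega})$ to $C^\infty(\overline{2B})$. Beyond this, the mathematical heart of the argument is simply that the Harnack constant depends on $B$ only through $\mu(B)$, so precisely on $F_\Omega(w,v)$ we obtain a scale-invariant contraction factor --- explaining why the maximal operator $M_\Omega(w,v)$ is the natural object for describing the set of continuity.
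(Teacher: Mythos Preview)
Your proof is correct and follows essentially the same route as the paper's: both apply Theorem~\ref{bound} for boundedness, then Theorem~\ref{Harnack} to $M-u$ and $u-m$ to obtain the oscillation contraction $\osc_B u \leq \gamma\,\osc_{2B}u$ with $\gamma<1$ uniformly on $F_\Omega(w,v)$, and iterate dyadically (the paper phrases the iteration via a reference to Moser's lemma, but it is the same computation). One small bookkeeping slip: to ensure $M_2=\sup_{2B}u$ is finite you need $u$ bounded on $2B$, which by Theorem~\ref{bound} requires $u\in\Sp_W^{1,p}(4B)$, so choose $r_0$ with $B(x_0,4r_0)\Subset\Omega$ rather than $B(x_0,2r_0)\Subset\Omega$.
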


\begin{proof} 
Our proof follows closely the proofs in \cite[Theorems 4.4 and
  4.5]{MR3011287}, so here we only sketch the main ideas.   
  
Note that by the above discussion we have that 
$|F_\Omega(w,v)|=|\Omega|$.  
Fix $x\in F_\Omega(w,v)$
  and let $B=B(x,r)$ be a ball such that $2B=B(x,2r)\subset \Omega$.
  Since $u\in \Sp_W^{1,p}(\Omega)$ it is clearly in $\Sp_W^{1,p}(2B)$ and is a
  solution to $\Lap_{W,p}u=0$ on $2B$.  So by Theorem \ref{bound}
  $u$ is bounded on $B$.  Therefore, if we let $M$ and $m$ be upper
  and lower bounds for $u$ on $B$, we can apply the Harnack
  inequality \eqref{eqn:Harnack} to $M-u$ and $u-m$ to conclude
that
$$\osc_u(x,\small{\tfrac12}B)
\leq
\frac{\exp\big(C\mu(\frac12B)^{1/p}\big)-1}{\exp\big(C\mu(\frac12B)^{1/p}\big)+1}\osc_u(x,B),$$
where $\osc_u(x,B)=\sup_B u-\inf_Bu$ is the oscillation of $u$ on $B$.  

Since
$$\mu(\tfrac12B)=\frac{v(\frac12B)}{w(\frac12B)}\leq
M_\Omega(w,v)(x),$$
we have that
$$\frac{\exp\big(C\mu(\frac12B)^{1/p}\big)-1}{\exp\big(C\mu(\frac12B)^{1/p}\big)+1}
\leq
\frac{\exp\big(CM_\Omega(w,v)(x)^{1/p}\big)-1}{\exp\big(CM_\Omega(w,v)(x)^{1/p}\big)+1}
=\gamma(x).$$
Moreover,
$F_\Omega(w,v) =\{x\in \Omega:\gamma(x)<1\}$.
Because $\gamma(x)<1$ we may perform Moser iteration (see \cite[Lemma
8.23]{MR1814364}) to show there exists $0<s(x)<\infty$ such that
$$\osc_u(x,\al B)\leq c(x) \al^{s(x)}\osc_u(x,B), \qquad 0<\al<1.$$ 
It follows from this inequality that $u$ agrees a.e.~with a function that is continuous on $F_\Omega(w,v)$.
\end{proof}

\section{The balance condition}
\label{section:balance}

In this section we consider the partial regularity of solutions of the
degenerate $p$-Laplacian equation $\Lap_{W,p}u=0$ with the additional
assumption that $W\in \A_p$.  Since $W\in \A_p$ implies (by Lemma
~\ref{lemma:scalar-Ap}) $w,v\in A_p$,
we have that conditions (1), (2) and (3) of
Definition~\ref{defn:padm} hold.  However,  the balance condition
\eqref{balance} does not follow automatically from the matrix $\A_p$
condition, as the next example shows.

\begin{example}
We modify Example~\ref{example:inclusion}.   For ease of computation
we will consider the balance condition for cubes instead of balls, but
it is clear that they are interchangeable in this setting.  
 Let $\Omega = Q =
(0,1)\times (0,1)$.  Fix $1<p<2$, $q>p$ and $p/2<\alpha<1$.  We again define
\[ W(x,y) = \left[\begin{array}{cc} 
1  & 0 \\ 0 & x^{-\alpha}y^{-\alpha} 
\end{array}\right].  \]
Then as before $W\in \A_p$
and $w(x,y)=1$, $v(x,y)=x^{-\alpha}y^{-\alpha}$.  
Furthermore,
\[ r \left(\frac{v(rQ)}{v(Q)}\right)^{1/q} \approx
r^{\frac{2-2\alpha}{q}+1},
\qquad \left(\frac{w(rQ)}{w(Q)}\right)^{1/p} = r^{\frac{2}{p}}. \]
Therefore, the balance condition holds only if
\[ \frac{2-2\alpha}{q}+1 \geq \frac{2}{p}. \]
However, by  our choice of $p$, $q$ and $\alpha$,
\[ \frac{2-2\alpha}{q}+1 < \frac{2-p}{p} +1 = \frac{2}{p}. \]
\end{example}

\medskip

Given this example, we want to determine sufficient conditions on $W$,
or more precisely on $v$ and $w$, for the balance condition to hold.
Intuitively, the above example fails because our choice of $\alpha$ is
too close to $1$:  the function $x^{-\alpha}$ is in $A_1$, but it only
satisfies the reverse H\"older inequality for small values of $s>1$.  
Our main result, which is a generalization
of~\cite[Theorems~4.8,~4.9]{MR3011287}, shows that a sufficiently
large reverse H\"older exponent yields the balance condition.

\begin{theorem} \label{expbalance} 
Given $1<p<\infty$,   suppose $1<s,t<\infty$, $w\in A_t$ and $v\in
RH_s$ where 
\begin{equation}\label{exponents}
0<t-\frac{p}{n}= \frac{1}{s'}.
\end{equation}
Then $(w,v)$ satisfies the balance condition \eqref{balance}.
\end{theorem}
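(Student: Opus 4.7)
The plan is to apply the two elementary $A_\infty$ bounds \eqref{eqn:Ainfty-Ap} and \eqref{eqn:Ainfty-RHs} directly to the sets $E=rB\subset B$, and then to use the open-endedness (self-improvement) of the Muckenhoupt $A_t$ class to extract the strict inequality $q>p$. Since $|rB|/|B|=r^n$, applying \eqref{eqn:Ainfty-RHs} to $v\in RH_s$ gives $v(rB)/v(B)\leq [v]_{RH_s}\,r^{n/s'}$, while applying \eqref{eqn:Ainfty-Ap} to $w\in A_t$ gives $w(rB)/w(B)\geq [w]_{A_t}^{-1}\,r^{nt}$. Substituting these bounds into the two sides of \eqref{balance} reduces the target inequality, for $0<r<1$, to the single exponent condition $1+n/(qs')\geq nt/p$. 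Using the hypothesis $1/s'=t-p/n$ from \eqref{exponents}, this collapses exactly to $1/q\geq 1/p$, that is, to the borderline case $q=p$.

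To produce a strictly larger $q$, I would invoke the classical open-endedness of the $A_t$ class: since $w\in A_t$ there exists $\epsilon\in(0,t-1)$ such that $w\in A_{t-\epsilon}$. Re-running \eqref{eqn:Ainfty-Ap} with the smaller exponent $t-\epsilon$ upgrades the lower bound to $w(rB)/w(B)\gtrsim r^{n(t-\epsilon)}$, and the exponent inequality becomes $1+n/(qs')\geq n(t-\epsilon)/p$. A short manipulation using $1/s'=t-p/n$ reduces this to $1/q\geq (1-\epsilon s')/p$. Choosing $\epsilon$ small enough that $\epsilon s'<1$, every $q$ with $p<q\leq p/(1-\epsilon s')$ satisfies the required bound, proving the theorem.

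The main obstacle is recognising that the relation $1/s'=t-p/n$ is designed to place the naive estimates exactly at the critical exponent $q=p$, so the conclusion $q>p$ cannot be obtained from \eqref{eqn:Ainfty-Ap} and \eqref{eqn:Ainfty-RHs} alone without some form of strict improvement of the hypotheses. Self-improvement of the $A_t$ hypothesis on $w$ supplies exactly this slack; equivalently, one could upgrade $v\in RH_s$ to $v\in RH_{s+\delta}$ for some $\delta>0$ via the sharp reverse H\"older estimate \eqref{eqn:sharp-RH} and run a parallel exponent calculation to the same effect.
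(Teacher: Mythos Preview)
Your proposal is correct and is essentially the paper's own argument: apply \eqref{eqn:Ainfty-RHs} to $v\in RH_s$ and \eqref{eqn:Ainfty-Ap} to $w\in A_{t-\epsilon}$ (after invoking the open-endedness of $A_t$), then compare the resulting power-of-$r$ exponents using $1/s'=t-p/n$. The only cosmetic difference is that the paper defines $q=\dfrac{n/s'}{n(t-\epsilon)/p-1}$ explicitly and checks $q>p$, whereas you arrive at the equivalent range $p<q\leq p/(1-\epsilon s')$ by first isolating the borderline case $q=p$; your closing remark that self-improving $v\in RH_s$ would serve equally well is also correct.
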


\begin{proof} 
Since $w\in A_t$ there exists $\ep>0$ such that $w\in A_{t-\ep}$
(see~\cite{duoandikoetxea01}).  In particular, by \eqref{exponents},
$$0<\frac{n}{p}(t-\ep)-1< \frac{n}{ps'}.$$
Define 
$$q=\frac{n/s'}{(t-\ep)n/p-1};$$
then we have that $q>p$. 
Fix  $0<r<1$ and a ball $B$.   By inequality~\eqref{eqn:Ainfty-Ap},
\begin{equation}\label{Apbd}
r^{\frac{n}{p}(t-\ep)}=\Big(\frac{|rB|}{|B|}\Big)^{\frac{t-\ep}{p}}
\leq C \Big(\frac{w(rB)}{w(B)}\Big)^{1/p}. 
\end{equation}
Moreover, by inequality~\eqref{eqn:Ainfty-RHs},
\begin{equation}\label{RHbd} 
r\Big(\frac{v(rB)}{v(B)}\Big)^{1/q}\leq
Cr\Big(\frac{|rB|}{|B|}\Big)^{1/(s'q)}
=Cr^{\frac{n}{s'q}+1}.
\end{equation}
If we combine \eqref{Apbd} and \eqref{RHbd} we immediately get the
balance inequality \eqref{balance}.
\end{proof}

\begin{remark} 
A close examination of the proof shows that it is enough to assume
that $1<s,t<\infty$ satisfy 
\begin{equation}\label{ineq:rt}
 0<t-\frac{p}{n}\leq \frac{1}{s'} 
\end{equation}
However, since the $A_t$ and $RH_s$ classes are nested (i.e., if
$u<t$, then $A_u\subseteq A_t$, and if $q>s$, $RH_q\subseteq RH_s$),
equality in \eqref{ineq:rt} is the interesting case.
\end{remark}

\medskip

Theorem~\ref{expbalance} seems to require a stronger condition on both
$v$ and $w$.  However, depending on the size of $p$ relative to the
dimension $n$, we can shift the stronger condition to one weight or
the other.  We first consider $p$ small:  in this case we require a
stronger condition on $v$.   

\begin{corollary} 
Suppose $1<p<n'$ and $W\in \A_p$.  If $v\in RH_{\frac{n'}{n'-p}}$,
then $(w,v)$ satisfy the balance condition \eqref{balance}. 
\end{corollary}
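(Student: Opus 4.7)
The plan is to apply Theorem~\ref{expbalance} directly. Since $W \in \A_p$, Lemma~\ref{lemma:scalar-Ap} immediately yields $w \in A_p$, so we already have the $A_t$ hypothesis of the theorem in hand with the natural choice $t = p$. The hypothesis on $v$ prescribes the reverse H\"older exponent: $v \in RH_s$ with $s = \frac{n'}{n'-p}$. All that remains is to verify that the pair $(s,t) = (\frac{n'}{n'-p},\,p)$ satisfies the arithmetic relation required by Theorem~\ref{expbalance}, namely
\[
0 < t - \frac{p}{n} = \frac{1}{s'}.
\]

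First I would compute $s'$. From $s = \frac{n'}{n'-p}$, we obtain $s-1 = \frac{p}{n'-p}$ and hence $s' = \frac{s}{s-1} = \frac{n'}{p}$, so $\frac{1}{s'} = \frac{p}{n'}$. Next I would compute $t - \frac{p}{n}$. Using the identity $\frac{1}{n} + \frac{1}{n'} = 1$,
\[
t - \frac{p}{n} = p - \frac{p}{n} = p\Bigl(1 - \frac{1}{n}\Bigr) = \frac{p}{n'}.
\]
Thus the two quantities agree, and positivity is automatic since $p > 1$ and $n \geq 2$. Consequently all hypotheses of Theorem~\ref{expbalance} are met, and the theorem delivers the balance condition \eqref{balance} for $(w,v)$.

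There is no substantive obstacle here: the corollary is essentially a matching of exponents. The only conceptual point worth flagging is why the upper bound $p < n'$ enters: this is precisely what guarantees $s = \frac{n'}{n'-p}$ is finite (so $v \in RH_s$ is a genuine, nontrivial reverse H\"older condition) and that the prescribed $t = p$ stays strictly larger than $p/n$, allowing Theorem~\ref{expbalance} to be invoked without degeneration.
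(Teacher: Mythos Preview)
Your proof is correct and follows essentially the same approach as the paper: both apply Theorem~\ref{expbalance} with $t=p$ (using $w\in A_p$ from Lemma~\ref{lemma:scalar-Ap}) and $s=\frac{n'}{n'-p}$, and verify the arithmetic relation $t-\frac{p}{n}=\frac{1}{s'}$. The only cosmetic difference is that the paper checks the equivalent identity $\frac{n}{p}t-1=\frac{n}{ps'}$ instead.
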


\begin{proof}  
Since $W \in \A_p$, we have $w\in A_p$.  Therefore, if we let
$s=\frac{n'}{n'-p}$, then $r'=n'/p$ and so
$$\frac{n}{p}p-1=n-1=\frac{n}{n'}=\frac{n}{ps'}.$$
Therefore, by Theorem~\ref{expbalance} the balance condition holds.
\end{proof}

When $p$ is large, we can shift the stronger hypothesis to  $w$.  The
following two corollaries  are immediate consequences of
Theorem~\ref{expbalance}. 

\begin{corollary} \label{RHassump} 
Suppose $p\geq n$, $W\in \A_p$ and $w\in A_t$, where 
$$\frac{p}{n}<t\leq\frac{p}{n}+\frac{1}{s'}$$ 
and $s>1$ is such that $v\in RH_s$.   Then the pair $(w,v)$ satisfies the balance
condition~\eqref{balance}. 
\end{corollary}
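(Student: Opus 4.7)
My plan is to apply Theorem~\ref{expbalance} directly, combined with the inequality version of its hypothesis stated in the Remark that immediately follows it. The corollary is essentially a bookkeeping restatement, so the proof reduces to checking that all hypotheses match.

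The first step is to rewrite the hypothesis on $t$. The assumption
$$ \frac{p}{n} < t \leq \frac{p}{n} + \frac{1}{s'} $$
is algebraically identical to
$$ 0 < t - \frac{p}{n} \leq \frac{1}{s'}, $$
which is exactly condition~\eqref{ineq:rt} permitted by the Remark. Next, I would verify that $1 < t < \infty$, which is the standing assumption of Theorem~\ref{expbalance}: since $p \geq n$ we have $p/n \geq 1$, so $t > p/n \geq 1$, and $t$ is finite by the upper bound. The condition $v \in RH_s$ with $s > 1$ is assumed verbatim. Applying Theorem~\ref{expbalance} (as extended by the Remark) then yields the balance condition~\eqref{balance} for the pair $(w,v)$.

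It is worth noting the role of the two remaining hypotheses that do not appear explicitly in the chain above. By Lemma~\ref{lemma:scalar-Ap}, the assumption $W\in\A_p$ gives $w \in A_p$, but by the nesting $A_p \subset A_r$ for $r \geq p$ the hypothesis $w \in A_t$ is only a genuine strengthening of what is already known when $t < p$. The assumption $p \geq n$ is precisely what makes the interval $(p/n,\,p/n+1/s']$ a sensible range in which $t$ can lie while still satisfying $t > 1$; without it, one might have $p/n < 1$ and the condition $w\in A_t$ would force modifications.

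Since the corollary is advertised as an immediate consequence of Theorem~\ref{expbalance}, there is no genuine obstacle: the whole argument is the exponent arithmetic above. The only point that requires any care is making sure one uses the inequality form of~\eqref{ineq:rt} from the Remark rather than the equality case of the theorem statement itself.
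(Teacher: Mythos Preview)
Your proposal is correct and matches the paper's approach exactly: the paper states that this corollary (along with the next one) is an ``immediate consequence'' of Theorem~\ref{expbalance} and gives no further proof, and your argument simply unpacks that immediacy by checking the exponent arithmetic and invoking the Remark's inequality form~\eqref{ineq:rt}. The observation that $p\geq n$ is what forces $t>1$ is the only point worth recording, and you have it.
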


\begin{remark}
Since $v\in A_p$ we know that $v\in RH_s$ for some $s>1$, so there
exists some $t>1$ for which the hypotheses hold.   Indeed,
by~\eqref{eqn:sharp-RH}, we can give a sharp estimate for $t$:
\[ t\leq \frac{1}{2^{n+12}[v]_{A_\infty}}+\frac{p}{n}. \]
\end{remark}

To state the next result, let  $A_{q}^*=\bigcap_{p>q} A_p$.  Note that
this  class is strictly larger than $A_q$.

\begin{corollary} \label{cor:Ap*}
If $p\geq n$, $W\in \A_p$ and $w\in A_{p/n}^*$, then the pair $(w,v)$ satisfies the balance condition \eqref{balance}.
\end{corollary}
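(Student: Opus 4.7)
My plan is to deduce Corollary \ref{cor:Ap*} as a direct consequence of Theorem \ref{expbalance} by choosing the reverse H\"older exponent $s$ to come from $v$ itself and then reading off the required $A_t$ exponent for $w$ from the relation \eqref{exponents}.

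First, I would observe that since $W\in \A_p$, Lemma \ref{lemma:scalar-Ap} gives $v\in A_p\subset A_\infty$. Hence, by the sharp reverse H\"older inequality~\eqref{eqn:sharp-RH} (or simply by the classical fact that $A_\infty=\bigcup_{s>1}RH_s$), there exists some $s>1$ such that $v\in RH_s$. Fix such an $s$, and define
\[
t = \frac{p}{n} + \frac{1}{s'}.
\]
Since $1/s'>0$ we have $t > p/n$. Moreover, because $p\geq n$ we have $p/n\geq 1$, so $t>1$, and $t$ is finite since $s>1$.

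Next, by hypothesis $w\in A_{p/n}^* = \bigcap_{q>p/n}A_q$, and since $t>p/n$ this gives $w\in A_t$. Thus we have produced exponents $s,t\in(1,\infty)$ with $w\in A_t$, $v\in RH_s$, and
\[
0 < t - \frac{p}{n} = \frac{1}{s'}.
\]
This is exactly the hypothesis \eqref{exponents} of Theorem \ref{expbalance}, so that theorem applies and yields the balance condition \eqref{balance} for the pair $(w,v)$.

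No step here looks delicate: the entire argument is bookkeeping of exponents. The only places one has to be slightly careful are (i) invoking $v\in RH_s$ for \emph{some} $s>1$, which needs $v\in A_\infty$ and is supplied by Lemma \ref{lemma:scalar-Ap} via $W\in \A_p$, and (ii) verifying $t>1$ so that the class $A_t$ is meaningful, which is precisely where the hypothesis $p\geq n$ enters. Once these are in place, Theorem \ref{expbalance} does all the work.
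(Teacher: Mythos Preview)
Your proof is correct and is exactly the argument the paper has in mind: the paper states that Corollary~\ref{cor:Ap*} is an immediate consequence of Theorem~\ref{expbalance}, and your choice of $s$ from $v\in A_\infty$ together with $t=p/n+1/s'$ (so that $w\in A_{p/n}^*\subset A_t$) is precisely how that deduction is made.
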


\medskip

As a consequence of Theorem~\ref{expbalance} and its corollaries, we
get the following partial regularity result.

\begin{theorem} \label{thm:Ap-degen}
If $1<p<\infty$, $W\in \A_p$, and $w,\,v$ satisfy the hypotheses of
any of the above results, and if $u$ is a weak solutions to
$\Lap_{W,p}u=0$, then $u$ is  continuous on the set
$$F_\Omega(w,v)= \{M_\Omega(w,v)(x)<\infty\}. $$
\end{theorem}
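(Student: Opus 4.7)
My plan is to derive the theorem directly from Theorem~\ref{thm:partial-reg}, which already guarantees continuity on $F_\Omega(w,v)$ whenever $(w,v)$ is a $p$-admissible pair. The work therefore reduces to showing that the four conditions of Definition~\ref{defn:padm} all follow from the hypotheses $W \in \A_p$ together with whichever auxiliary assumption is invoked.

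Conditions (1)--(3) are automatic from $W \in \A_p$ alone. Proposition~\ref{prop:elliptic} gives $w \leq v$ a.e., which is (1). Lemma~\ref{lemma:scalar-Ap} gives $w \in A_p$, which is (2), and also $v \in A_p$; since every scalar $A_p$ weight is doubling, (3) follows. Condition (4), the balance condition~\eqref{balance}, is precisely what the auxiliary hypothesis supplies: Theorem~\ref{expbalance}, Corollary~\ref{RHassump}, or Corollary~\ref{cor:Ap*} each conclude~\eqref{balance} from the stated exponent relations between $w$ and $v$. Hence $(w,v)$ is a $p$-admissible pair, and Theorem~\ref{thm:partial-reg} immediately yields that $u$ is continuous on $F_\Omega(w,v)$.

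The only small technicality I would pause to address is the solution space. Theorem~\ref{thm:partial-reg} is phrased for $u \in \Sp_W^{1,p}(\Omega)$, whereas under the matrix $\A_p$ hypothesis one might naturally work in the larger space $\W_W^{1,p}(\Omega)$. This is harmless because the argument in Theorem~\ref{thm:partial-reg} is purely local, invoking Theorems~\ref{bound} and~\ref{Harnack} on balls $2B \Subset \Omega$. Since a ball has Lipschitz boundary, Theorem~\ref{thm:bndry-smooth} gives $\Sp_W^{1,p}(2B) = \W_W^{1,p}(2B)$, so the restriction of any weak solution (in either the $\Sp$- or $\W$-sense) to $2B$ belongs to the space required by the local arguments. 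There is no substantive obstacle here; the theorem is essentially a clean packaging of the partial-regularity result of the previous section together with the balance-condition corollaries proved above.
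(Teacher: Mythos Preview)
Your proposal is correct and matches the paper's own approach: the paper treats Theorem~\ref{thm:Ap-degen} as an immediate consequence of Theorem~\ref{thm:partial-reg} together with Theorem~\ref{expbalance} and its corollaries, having already noted at the start of Section~\ref{section:balance} that conditions (1)--(3) of $p$-admissibility follow from $W\in\A_p$ via Proposition~\ref{prop:elliptic} and Lemma~\ref{lemma:scalar-Ap}. Your remark on the solution space is also in line with the paper, which makes exactly the same observation (via Theorem~\ref{thm:bndry-smooth}) in Section~\ref{section:degenerate}.
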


\begin{remark}
  Theorem~\ref{thm:Ap-degen} is the best possible: that is, there
  exists $W$ satisfying the hypotheses such that a solution to
  $\Lap_{W,p}u=0$ is discontinuous on the complement of
  $F_\Omega(w,v)$.  See Example~\ref{example:optimal} below.  
\end{remark}

\section{Mappings of finite distortion}
\label{section:finite-distortion}

In this section we apply our results on the partial regularity of
solutions of the
degenerate $p$-Laplacian to mappings of finite distortion.  
Hereafter, let $\Omega\subset \R^n$ be a domain that is not
necessarily bounded.  A vector function $\vf:\Omega\rightarrow \R^n$
is a mapping of finite distortion (MFD) if
\begin{enumerate}
\item $\vf\in \W_{\loc}^{1,1}(\Omega,\R^n)$; 
\item the Jacobian $J_{\vf}(x)=\det D\vf(x)>0$ a.e., and $ J_\vf \in
  L^1_{\loc}(\Omega)$; 
\item there exists $K(x)<\infty$ a.e.~such that
$|D\vf(x)|_\op^n\leq K(x)J_{\vf}(x)$.
\end{enumerate}
As we noted in the Introduction, the regularity of MFDs has been studied by a
number of authors.  A classical result due to Vodop'janov and
Gol'd{\v{s}}te{\u\i}n \cite{MR0414869} is that if
$\vf \in W^{1,n}(\Omega)$, then $\vf$ is continuous.   Generally
speaking, most results in this area show that if $\vf \not\in
W^{1,n}(\Omega)$,  continuity follows if
$\exp(K)$ satisfies some kind of integrability condition.    Our
results are quite different as we only prove partial regularity; they
are similar in spirit, though not in detail, to the work of
Manfredi~\cite{MR1294334}.

To state our results, we first give
some basic definitions and results on MFDs; for complete information,
including proofs, 
see~\cite{MR1207810,MR1859913}. 
The smallest function $K$ such that the (3) holds is called the outer
distortion of $\vf$ and is denoted $K_O$: i.e.,
 $$|D\vf(x)|_\op^n=K_O(x)J_{\vf}(x).$$
Since it is always the case that $|D\vf(x)|_\op\geq J_{\vf}(x)$, we
must have that $K_O(x)\geq 1$ a.e.
Similarly, we define the inner distortion, denoted $K_I$, to be the
smallest distortion function of the inverse differential matrix:
$$|D\vf^{-1}(x)|_\op^n=K_I(x)J_{\vf^{-1}}(x)=K_I(x)J_{\vf}(x)^{-1}.$$
%
The inner and outer distortion functions are related by the
inequalities
$$K_O\leq K_I^{n-1} \qquad \text{and} \qquad K_I\leq K_O^{n-1}.$$
Finally, if we define the maximal distortion $K_M$ by
\[ K_M(x) = \max\big( K_I(x), K_O(x)\big),  \]
then we have that 
$$K_I\leq K_M\leq K_I^{n-1} \qquad \text{and} \qquad K_O\leq K_M\leq
K_O^{n-1}. $$

\medskip

We now show that a mapping of finite distortion is a solution to a
degenerate $p$-Laplacian equation.  Define the distortion tensor of $\vf$ to be the  symmetric matrix
$$G(x)=J_{\vf}(x)^{-2/n} D\vf(x)^\T D\vf(x).$$
Hereafter, let $W=G^{-n/2}$.  Then we have that
\begin{equation} \label{eqn:KO-w}
|W^{-1}(x)|_\op=\frac{|D\vf(x)^\T
  D\vf(x)|_\op^{n/2}}{J_{\vf}(x)}=K_O(x)
\end{equation}
and
\begin{equation} \label{eqn:KI-v}
|W(x)|_\op=|D\vf(x)^{-1}
(D\vf(x)^{-1})^\T|_\op^{n/2}J_{\vf}(x)=K_I(x).
\end{equation}
In particular, by inequality~\eqref{eqn:elliptic},
$$K_O(x)^{-1}|\bxi|^n
\leq |W^{1/n}\bxi|^n\leq K_I(x){|\bxi|^n}.$$

Let $\vf=(f_1,\ldots,f_n)$.  Then by definition, $f_i \in
\W^{1,1}_\loc(\Omega)$.   Suppose  $(K_O^{-1},K_I)$ is an
$n$-admissible pair.  Then  given any ball
$B\subseteq\Omega$,   we have that $f_i \in \W^{1,n}_W(B)$.  
We first show that $\grad f_i \in L^n_W(B)$.   Given a matrix $A$, let
$[A]_i$ denote its $i$-th column.  Then, treating $\grad f_i$ as a
column vector, we have that 
$$ D\vf^{-1}(D\vf^{-1})^\T \nabla f_i=D\vf^{-1}\boldsymbol{e}_i=[D\vf^{-1}]_i $$
and $[D\vf(x)^{-1}]_i\cdot\nabla f_i=1$. Therefore,
$$\int_B |W^{1/n}\nabla f_i|^n\,dx=\int_B \langle G^{-1}\nabla f_i,\nabla f_i\rangle^{n/2}\,dx=\int_B J_{\vf}(x)\,dx<\infty.$$
To show that $f_i \in L^n(K_I,B)$, we use the fact that since
$(K_O^{-1},K_I)$ is an $n$-admissible pair, we have a two-weight
poincare inequality (see~\cite{MR805809}):
\begin{multline*}\frac{1}{K_I(B)}\int_B|f_i-(f_i)_B|^nK_I\,dx\lesssim \frac{r(B)^n}{K_O^{-1}(B)}\int_B|\nabla f_i|^n K_O^{-1}\,dx\\
\leq \frac{r(B)^n}{K_O^{-1}(B)}\int_B|W^{1/n}\nabla f_i|^n\,dx=\frac{r(B)^n}{K_O^{-1}(B)}\int_B J_{\vf}(x)\,dx.\end{multline*}
It follows that 
$$\|f_i\|_{L^{n}_W(B)}\lesssim \|f_i\|_{L^1(B)} +\Big(\int_B J_{\vf}\,dx\Big)^{1/n}.$$

Finally, we have that if $\vf\in \W_{\loc}^{1,n-1}(\Omega,\R^n)$, then
the component functions, $f_i$ are weak solutions
of
$$\Lap_{W,n}u=\Div(|W^{1/n}\nabla u|^{n-2}W^{2/n}\nabla u)=0.$$
See~\cite[Chapter~15]{MR1859913} for details.



From these observations we see that given an MFD $\vf$, we have that
the component functions $f_i$, $1\leq i \leq n$, satisfy a degenerate
$p$-Laplacian equation $\Lap_{W,n}u=0$, where the matrix $W$ satisfies
the natural ellipticity conditions with bounds given by the distortion
functions.  In other words, these functions fall within the framework
of our results in the previous two sections.  This leads to the
following partial regularity result for mappings of finite distortion.

\begin{theorem} \label{thm:MFD-pr} 
  Given an MFD, $\vf \in \W^{1,n-1}_{\loc}(\Omega,\R^n)\cap \Sp_W^{1,n}(\Omega)$, suppose
  $(K_O^{-1}, K_I)$ is an
  $n$-admissible pair.    Then $\vf$ is continuous almost everywhere
  on $\Omega$.  More precisely, given any ball $B\subset \Omega$, then
  $\vf$ is continuous on the set
$$\{x\in B: M_B(K_O^{-1},K_I)(x)<\infty\}.$$
\end{theorem}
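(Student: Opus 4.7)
The plan is to apply Theorem \ref{thm:partial-reg} componentwise to $\vf = (f_1,\ldots,f_n)$. With the distortion tensor $W = G^{-n/2}$ as set up before the statement, the identities \eqref{eqn:KO-w}--\eqref{eqn:KI-v} yield $v = |W|_\op = K_I$ and $w = |W^{-1}|_\op^{-1} = K_O^{-1}$, so by hypothesis $(w,v)$ is an $n$-admissible pair. Moreover, the discussion preceding the theorem shows that, under $\vf \in \W^{1,n-1}_{\loc}(\Omega,\R^n)$, each component $f_i$ is a weak solution of $\Lap_{W,n} u = 0$ on $\Omega$ and satisfies $f_i \in \W_W^{1,n}(B)$ on every ball $B \Subset \Omega$.

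Fix a ball $B \subset \Omega$ and localize the problem to this bounded Lipschitz subdomain. To invoke Theorem \ref{thm:partial-reg} on $B$ I need each $f_i$ to lie in $\Sp_W^{1,n}(B)$ and to be a weak solution of $\Lap_{W,n} u = 0$ on $B$. The weak solution property on $B$ is automatic because $C_c^\infty(B) \subset C_c^\infty(\Omega)$. Membership in $\Sp_W^{1,n}(B)$ is obtained from the hypothesis $\vf \in \Sp_W^{1,n}(\Omega)$ by restricting the global smooth approximants to $\overline{B}$ and passing to the limit in the $\W_W^{1,n}(B)$ norm; this norm is indeed Banach by Theorem \ref{thm:complete}, since $w = K_O^{-1} \in A_n$ (part of $n$-admissibility) gives $w^{-n'/n}\in L^1_{\loc}$.

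Theorem \ref{thm:partial-reg} applied on $B$ with $p = n$ then yields continuity of each $f_i$, and hence of $\vf$, on
\[ F_B(K_O^{-1}, K_I) = \{x \in B : M_B(K_O^{-1}, K_I)(x) < \infty\}. \]
For the almost-everywhere statement, $K_I = |W|_\op \in L^1(B)$ together with the weak-type estimate for $M_B$ recorded just before Theorem \ref{thm:partial-reg} shows that $F_B(K_O^{-1}, K_I)$ has full Lebesgue measure in $B$; covering $\Omega$ by countably many such balls finishes the proof.

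The main technical hurdle is the verification $f_i|_B \in \Sp_W^{1,n}(B)$. The space $\Sp_W^{1,p}(\Omega)$ is defined in Section \ref{section:H=W} only for bounded domains, so the global hypothesis $\vf \in \Sp_W^{1,n}(\Omega)$ for possibly unbounded $\Omega$ must be read as holding on each bounded Lipschitz subdomain, and the restriction-of-approximants step must be executed with this interpretation in mind. An alternative, more transparent route would be to impose $W \in \A_n$ directly, so that Theorem \ref{thm:bndry-smooth} gives $\Sp_W^{1,n}(B) = \W_W^{1,n}(B)$ and the membership becomes automatic from $\vf|_B \in \W_W^{1,n}(B)$; but the hypotheses of the theorem provide only the scalar $n$-admissibility, so one cannot appeal to this shortcut without additional assumptions.
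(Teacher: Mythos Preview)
Your proof is correct and follows essentially the same approach as the paper: apply Theorem~\ref{thm:partial-reg} componentwise on a fixed ball $B$, then cover $\Omega$ by countably many balls to get the almost-everywhere conclusion. You are in fact more careful than the paper about justifying the membership $f_i|_B \in \Sp_W^{1,n}(B)$ via restriction of approximants, and your observation about the domain convention for $\Sp_W^{1,p}$ is a fair point about the paper's exposition rather than a defect in the argument.
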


\begin{proof} 
  Fix a ball $B\subset \Omega$.  The component functions of $\vf$
  belong to $\Sp_W^{1,p}(B)$ and are weak solutions of
  $\Lap_{W,p}u=0$.  Therefore, by Theorem \ref{thm:partial-reg} each
  of the component functions is continuous on the set
\begin{equation} \label{eqn:cont-set}
F_B(K_O,K_I)=\{x\in B: M_B(K_O^{-1},K_I)(x)<\infty\}.
\end{equation}
Since $|F_B(K_O,K_I)|=|B|$ and $\Omega$ is the countable
union of balls, we have that $\vf$ is continuous almost everywhere on
$\Omega$.
\end{proof}

\medskip

Following our approach in Section~\ref{section:balance}, we now
consider the hypothesis that  $(K_O^{-1}, K_I)$ is an
  $n$-admissible pair given the additional assumption that $W=G^{-n/2}
  \in \A_n$.  (Equivalently, we may assume $W^{-n'/n}=G^{n'/2}\in
  A_{n'}$.  This is particularly useful when $n=2$.)  In this case, by Lemma~\ref{lemma:scalar-Ap} and the
identities~\eqref{eqn:KO-w} and~\eqref{eqn:KI-v}, we have that
conditions (1) and (2) of Definition~\ref{defn:padm} hold, so the main
problem is determining additional assumptions so that the balance
condition~\eqref{balance} holds.  Our first result is just a
restatement of Theorem~\ref{expbalance} in this setting.

\begin{corollary} \label{cor:MFD-wv-cond}
Suppose $\vf$ is an MFD and $W=G^{-n/2}\in \A_n$.  Suppose further
that $K_O^{-1} \in A_t$ and $K_I \in RH_s$, where 
\[ 0 < t-1 = \frac{1}{s'}. \]
Then $\vf$ is continuous a.e.~on $\Omega$ and the set of
continuity is given by~\eqref{eqn:cont-set}.
\end{corollary}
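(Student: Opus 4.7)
The plan is to reduce this corollary directly to Theorem~\ref{thm:MFD-pr} by verifying that $(K_O^{-1}, K_I)$ is an $n$-admissible pair in the sense of Definition~\ref{defn:padm}, and then to apply Theorem~\ref{expbalance} (with the specialization $p=n$) to obtain the balance condition.

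First I would unpack the four requirements of $n$-admissibility. Since $W = G^{-n/2}$ is an invertible matrix weight, Proposition~\ref{prop:elliptic} together with the identifications $w = |W^{-1}|_\op^{-1} = K_O^{-1}$ from~\eqref{eqn:KO-w} and $v = |W|_\op = K_I$ from~\eqref{eqn:KI-v} yields condition (1), namely $K_O^{-1} \leq K_I$ a.e. Next, from the hypothesis $W \in \A_n$, Lemma~\ref{lemma:scalar-Ap} gives that both $K_O^{-1}$ and $K_I$ are scalar $A_n$ weights. This immediately takes care of condition (2) (which asks $w \in A_p$ with $p=n$) and condition (3) (since every scalar $A_n$ weight is doubling). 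So after this brief bookkeeping only the balance condition remains.

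Next I would verify the balance condition~\eqref{balance} by invoking Theorem~\ref{expbalance} with the choice $p = n$. That theorem requires exponents $1 < s, t < \infty$ with $0 < t - \frac{p}{n} = \frac{1}{s'}$, with $w \in A_t$ and $v \in RH_s$. Substituting $p = n$, this condition becomes exactly $0 < t - 1 = \frac{1}{s'}$, which is precisely our hypothesis, and the requirements $K_O^{-1} \in A_t$ and $K_I \in RH_s$ are also given. Thus Theorem~\ref{expbalance} applies and produces~\eqref{balance} for the pair $(K_O^{-1}, K_I)$.

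Having verified all four conditions of Definition~\ref{defn:padm}, I conclude that $(K_O^{-1}, K_I)$ is an $n$-admissible pair and invoke Theorem~\ref{thm:MFD-pr} to deduce that $\vf$ is continuous a.e.~on $\Omega$, with the set of continuity given by~\eqref{eqn:cont-set}. Essentially no obstacle arises here: the corollary is a pure matching-of-hypotheses between the MFD framework of Section~\ref{section:finite-distortion} and the abstract weight-theoretic balance result of Theorem~\ref{expbalance}. The only subtle point worth stating explicitly is that the exponent relation $t-1 = 1/s'$ is the $p=n$ specialization of $t - p/n = 1/s'$, which is why the corollary looks cleaner than Theorem~\ref{expbalance} itself.
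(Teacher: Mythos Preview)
Your proposal is correct and matches the paper's approach exactly: the paper gives no formal proof, stating only that the corollary ``is just a restatement of Theorem~\ref{expbalance} in this setting,'' after having noted in the preceding paragraph that conditions (1)--(3) of Definition~\ref{defn:padm} follow from $W\in\A_n$ via Lemma~\ref{lemma:scalar-Ap} and the identities~\eqref{eqn:KO-w},~\eqref{eqn:KI-v}. Your write-up simply makes this explicit, correctly observing that $t-1=1/s'$ is the $p=n$ specialization of the hypothesis in Theorem~\ref{expbalance}, and then invoking Theorem~\ref{thm:MFD-pr}.
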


Since in our setting $p=n$, we can apply Corollaries~\ref{RHassump}
and~\ref{cor:Ap*}.  For brevity we will only consider the latter and
leave the restatement of the former to the interested reader.  

\begin{corollary} \label{cor:KO-A1*}
  Suppose $\vf \in \W_{\loc}^{1,n-1}(\Omega,\R^n)$ is an MFD,
  $W=G^{-n/2}\in \A_n$, and
$$1/K_O\in A_1^*=\bigcap_{p>1} A_p.$$  
Then $\vf$ is continuous a.e.~on $\Omega$ and the set of
continuity is given by~\eqref{eqn:cont-set}.
\end{corollary}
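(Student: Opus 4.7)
The plan is to reduce this corollary directly to Theorem~\ref{thm:MFD-pr} by verifying that $(K_O^{-1}, K_I)$ is an $n$-admissible pair in the sense of Definition~\ref{defn:padm}, and then invoking the balance condition result from Corollary~\ref{cor:Ap*} with the choice $p = n$. The starting observation is the identification furnished by \eqref{eqn:KO-w} and \eqref{eqn:KI-v}: for the matrix weight $W = G^{-n/2}$ associated with the MFD $\vf$, the natural scalar weights are $w(x) = |W^{-1}(x)|_{\op}^{-1} = K_O(x)^{-1}$ and $v(x) = |W(x)|_{\op} = K_I(x)$. Thus the hypothesis $1/K_O \in A_1^*$ is exactly $w \in A_1^*$, and verifying that $(w,v) = (K_O^{-1}, K_I)$ is $n$-admissible is all that is needed.

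First I would check conditions (1)--(3) of Definition~\ref{defn:padm}. Condition (1), $w \leq v$, is immediate from Proposition~\ref{prop:elliptic} applied to the invertible matrix weight $W$. Conditions (2) and (3) both follow from Lemma~\ref{lemma:scalar-Ap}: since $W \in \A_n$, both $w$ and $v$ belong to scalar $A_n$, so in particular $w \in A_n$ and $v$ is doubling (every $A_n$ weight is doubling).

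The only nontrivial step is condition (4), the balance inequality \eqref{balance} for some exponent $q > n$. For this I would invoke Corollary~\ref{cor:Ap*} with $p = n$. The hypotheses of that corollary require $p \geq n$ (satisfied with equality), $W \in \A_p = \A_n$ (given), and $w \in A_{p/n}^* = A_1^*$ (which is precisely the assumed condition $1/K_O \in A_1^*$). Corollary~\ref{cor:Ap*} then delivers the balance condition for $(w,v) = (K_O^{-1}, K_I)$, completing the verification of $n$-admissibility.

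With all four conditions of Definition~\ref{defn:padm} in hand, and the additional assumption $\vf \in \W^{1,n-1}_\loc(\Omega, \R^n)$ (which, together with the standard theory referenced around \eqref{eqn:new-PDE}, guarantees that the components $f_i$ lie in $\Sp_W^{1,n}$ on every ball and are weak solutions of $\Lap_{W,n} u = 0$), the hypotheses of Theorem~\ref{thm:MFD-pr} are satisfied. Applying that theorem yields continuity of $\vf$ on every set of the form $\{ x \in B : M_B(K_O^{-1}, K_I)(x) < \infty \}$, which has full measure in each ball $B \subset \Omega$. Covering $\Omega$ by countably many balls then gives continuity a.e.~on $\Omega$, with the set of continuity described by \eqref{eqn:cont-set}. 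The proof is essentially a bookkeeping exercise, and there is no serious obstacle beyond matching the parameter $p = n$ to Corollary~\ref{cor:Ap*}.
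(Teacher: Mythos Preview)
Your proposal is correct and matches the paper's approach: the paper treats this corollary as an immediate restatement of Corollary~\ref{cor:Ap*} with $p=n$, which is exactly what you do, and the passage from the balance condition to the conclusion is via Theorem~\ref{thm:MFD-pr} just as you indicate. The only minor inaccuracy is your pointer to ``the standard theory referenced around~\eqref{eqn:new-PDE}'' for why the components lie in $\Sp_W^{1,n}(B)$ and are weak solutions; that material is actually developed in Section~\ref{section:finite-distortion} (together with Theorem~\ref{thm:bndry-smooth} to identify $\Sp_W^{1,n}(B)$ with $\W_W^{1,n}(B)$ when $W\in\A_n$), not in Section~\ref{section:degenerate}.
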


As a consequence of Corollary~\ref{cor:KO-A1*} we give two results
which implicitly require the outer distortion to be exponentially
integrable.  For the first result, note that a weight $w$ is such that
$w,\,w^{-1} \in A_1^*$ if and only if $\log(w)$ is in the closure of
$L^\infty$ in $BMO$; in particular, the latter inclusion holds if
$\log(w) \in VMO$.
(See~\cite[p.~474]{garcia-cuerva-rubiodefrancia85}.)

\begin{corollary} 
Suppose $\vf \in W_{\loc}^{1,n-1}(\Omega,\R^n)$, $W=G^{-n/2}\in \A_n$,
and $\log(K_O)$ is in the closure of $L^\infty$ in $BMO$.  
Then $\vf$ is continuous a.e.~on $\Omega$ and the set of
continuity is given by~\eqref{eqn:cont-set}.
\end{corollary}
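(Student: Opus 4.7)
The plan is to reduce directly to Corollary~\ref{cor:KO-A1*} by way of the standard characterization of the $BMO$-closure of $L^\infty$ that was invoked in the discussion just before the statement. Recall (cf.~\cite[p.~474]{garcia-cuerva-rubiodefrancia85}) that for a positive weight $w$, one has $w,\,w^{-1}\in A_1^*$ if and only if $\log(w)$ lies in the closure of $L^\infty$ in $BMO$. Applying this equivalence with $w=K_O$, the hypothesis on $\log(K_O)$ yields immediately that $K_O^{-1}\in A_1^*$ (the companion conclusion $K_O\in A_1^*$ is available but is not needed for the argument).

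Next, I would verify that the remaining hypotheses of Corollary~\ref{cor:KO-A1*} are in place. By assumption $\vf\in \W^{1,n-1}_{\loc}(\Omega,\R^n)$ and $W=G^{-n/2}\in \A_n$; the $\A_n$ condition forces $W$ to be an invertible matrix weight, so $G$ is positive definite a.e. and in particular $J_\vf>0$ a.e.  Thus $\vf$ is genuinely a mapping of finite distortion, and by~\eqref{eqn:KO-w} and~\eqref{eqn:KI-v} the scalar weights arising from $W$ are exactly $|W^{-1}|_\op=K_O$ and $|W|_\op=K_I$, which are a.e.~finite.  With $1/K_O\in A_1^*$ now available, Corollary~\ref{cor:KO-A1*} applies verbatim and yields the claimed partial regularity: $\vf$ is continuous almost everywhere on $\Omega$ and, on each ball $B\subset\Omega$, the continuity set contains $\{x\in B:M_B(K_O^{-1},K_I)(x)<\infty\}$.

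There is no genuine obstacle: the corollary is a dictionary translation of Corollary~\ref{cor:KO-A1*} into the language of $BMO$, which is how exponential integrability of the outer distortion is classically captured in the MFD literature. The only conceptual point is to recognize that the one-sided consequence ``$\log(K_O)$ in the $BMO$-closure of $L^\infty$ $\Longrightarrow$ $K_O^{-1}\in A_1^*$'' is already enough to trigger the machinery built up in the preceding sections; the symmetric consequence $K_O\in A_1^*$ (which also follows) is a free byproduct rather than a hypothesis of Corollary~\ref{cor:KO-A1*}.
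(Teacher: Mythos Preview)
Your proof is correct and follows exactly the route the paper intends: the remark preceding the corollary already records that $\log(K_O)$ lying in the $BMO$-closure of $L^\infty$ is equivalent to $K_O,\,K_O^{-1}\in A_1^*$, so the result is an immediate application of Corollary~\ref{cor:KO-A1*}. Your second paragraph verifying the auxiliary hypotheses is harmless but unnecessary, since those hypotheses are assumed verbatim in the statement.
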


For the second result, we use the fact that if $b$ is a function such
that $b,1/b\in BMO$, then $b\in A_1^*$.  (See~\cite{MR1239426}.)
Since $K_O\geq 1$,  we always have
that $K_O^{-1}\in L^\infty(\Omega)\subset BMO$.

\begin{corollary} 
Suppose $\vf \in W_{\loc}^{1,n-1}(\Omega,\R^n)$, $W=G^{-n/2}\in \A_n$, and
$K_O\in BMO$.  Then $\vf$ is continuous a.e.~on $\Omega$ and the set of
continuity is given by~\eqref{eqn:cont-set}.
\end{corollary}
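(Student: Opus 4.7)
The plan is to reduce this immediately to Corollary~\ref{cor:KO-A1*}, whose hypothesis is that $K_O^{-1} \in A_1^* = \bigcap_{p>1} A_p$. All other assumptions of that corollary, namely $\vf \in W^{1,n-1}_\loc(\Omega,\R^n)$ and $W = G^{-n/2} \in \A_n$, are already given. So the entire task is to verify that, under the hypothesis $K_O \in BMO$, one has $K_O^{-1} \in A_1^*$.

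First I would record the pointwise bound $K_O(x) \geq 1$ a.e., which was established in the discussion preceding~\eqref{eqn:KO-w}. This gives $K_O^{-1} \in L^\infty(\Omega)$ with $\|K_O^{-1}\|_\infty \leq 1$, and in particular $K_O^{-1} \in BMO$ since $L^\infty \subset BMO$.

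Next, I would apply the criterion attributed above to~\cite{MR1239426}: if a positive function $b$ satisfies $b, 1/b \in BMO$, then $b \in A_1^*$. Taking $b = K_O^{-1}$, we have $b \in BMO$ by the previous step, while $1/b = K_O \in BMO$ by the standing hypothesis of the corollary. Therefore $K_O^{-1} = b \in A_1^*$, as required.

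With $K_O^{-1} \in A_1^*$ verified, Corollary~\ref{cor:KO-A1*} applies directly and yields that $\vf$ is continuous almost everywhere on $\Omega$, with set of continuity described by~\eqref{eqn:cont-set}. There is no real obstacle here, the argument is essentially a two-line invocation of an external BMO $\Rightarrow$ $A_1^*$ result combined with the tautological observation that $K_O \geq 1$; the substantive work was done in establishing Corollary~\ref{cor:KO-A1*}.
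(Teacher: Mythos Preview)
Your proposal is correct and matches the paper's own argument essentially line for line: the paper states just before the corollary that if $b,\,1/b\in BMO$ then $b\in A_1^*$ (citing~\cite{MR1239426}), observes that $K_O\geq 1$ forces $K_O^{-1}\in L^\infty\subset BMO$, and then implicitly invokes Corollary~\ref{cor:KO-A1*}. There is nothing to add.
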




\medskip

We now want to give some partial regularity theorems that are related
to the results in~\cite{MR3011287}.  The major improvement here is
that by assuming that $W$ is in matrix $\A_n$ we no longer have to
assume that a weak solution is in the closure of the smooth
functions.   To state our results we first note that in
Theorem~\ref{thm:partial-reg}, while we implicitly assumed that
$v=|W|_\op$ and $w=|W^{-1}|_\op^{-1}$, we never used this in the
proof.  All we used was the fact that $(w,v)$ is a $p$-admissible
pair, and the ellipticity condition~\eqref{ellipcond} holds.
Further, note that using the relationships relating them, we can give
ellipticity conditions for $W=G^{-n/2}$ in terms of the distortion
functions $K_M$, $K_O$ and $K_I$:
\begin{align}
&K_M(x)^{-1}|\boldsymbol{\xi}|^n\leq
  |W^{1/n}\boldsymbol{\xi}|^n\leq {K_M(x)}|\boldsymbol{\xi}|^n 
\label{maxdist}\\
&K_O(x)^{-1}|\boldsymbol{\xi}|^n\leq
  |W^{1/n}\boldsymbol{\xi}|^n\leq K_O(x)^{n-1}|\boldsymbol{\xi}|^n 
\label{outerdist}\\
&K_I(x)^{1-n}|\boldsymbol{\xi}|^n\leq
  |W^{1/n}\boldsymbol{\xi}|^n\leq {K_I(x)}|\boldsymbol{\xi}|^n. 
 \label{innerdist}
\end{align}

To state our results we will need a local version of the
Hardy-Littlewood maximal operator.   Given a ball $B\subset \Omega$
and a locally integrable function $f$ define
$$M_Bf(x)=\sup_{B'}\avgint_{B'}|f(y)|\,dy \cdot
\chi_{B'}(x),$$
where the supremum is over all  balls $B'\subset B$.

\begin{theorem} \label{thm:KM-reg}
  Given an MFD $\vf \in W_{\loc}^{1,n-1}(\Omega,\R^n)$, suppose
  $W=G^{-n/2}\in \A_n$, and $K_M\in A_2\cap RH_2$.  Then $\vf$ is
  continuous almost everywhere on $\Omega$.  More precisely, given any
  ball $B$, $\vf$ is continuous on the set
$$\{x\in B: M_B(K_M)(x)<\infty\}.$$
\end{theorem}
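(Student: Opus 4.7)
The plan is to apply Theorem~\ref{thm:MFD-pr} (equivalently Theorem~\ref{thm:partial-reg}) with the alternative weight pair $w=K_M^{-1}$, $v=K_M$; by \eqref{maxdist} this pair satisfies the two-weight ellipticity condition \eqref{ellipcond} for $W=G^{-n/2}$. Conditions (1)--(3) of Definition~\ref{defn:padm} hold essentially for free: $K_M\ge 1$ gives (1); the self-duality of $A_2$ applied to $K_M\in A_2$ gives $K_M^{-1}\in A_2\subset A_n$, so (2) holds; and $K_M\in A_2\subset A_\infty$ is doubling, giving (3). The only nontrivial admissibility condition is the balance inequality (4), and I expect this to be the main obstacle.

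For (4) the key observation is the self-improvement
\[
K_M\in A_2\cap RH_2 \;\Longrightarrow\; K_M^{-1}\in A_{3/2},
\]
which I would verify by a direct computation of the $A_{3/2}$ expression: for any ball $B$,
\[
\Big(\dashint_B K_M^{-1}\Big)\Big(\dashint_B K_M^2\Big)^{1/2}
\lesssim \Big(\dashint_B K_M^{-1}\Big)\Big(\dashint_B K_M\Big) \lesssim 1,
\]
where the first inequality uses $K_M\in RH_2$ and the second uses $K_M\in A_2$. Applying Theorem~\ref{expbalance} with $p=n$, $t=3/2$, and $s=2$ (so that $t-p/n=1/s'=1/2$) then yields the balance condition \eqref{balance} for some $q>n$, since the openness of the $A_t$ class is invoked inside the proof of Theorem~\ref{expbalance} precisely to produce this strict inequality at the borderline.

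With $(K_M^{-1},K_M)$ now $n$-admissible, the proof of Theorem~\ref{thm:MFD-pr} carries through using this pair in place of $(K_O^{-1},K_I)$: for each ball $B\Subset\Omega$, the component functions of $\vf$ lie in $\Sp_W^{1,n}(B)$ (via the two-weight Poincar\'e inequality for $(K_M^{-1},K_M)$ together with the bound $\int_B|W^{1/n}\grad f_i|^n=\int_B J_\vf<\infty$, noting that $K_M\geq K_I$ so $L^n(K_M,B)\subset L^n(K_I,B)$) and are weak solutions of $\Lap_{W,n}u=0$. Theorem~\ref{thm:partial-reg} then gives continuity of each $f_i$, and hence of $\vf$, on $\{x\in B:M_B(K_M^{-1},K_M)(x)<\infty\}$.

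It remains to identify this set with $\{x\in B:M_B(K_M)(x)<\infty\}$. Writing
\[
\frac{K_M(B')}{K_M^{-1}(B')}=\frac{\dashint_{B'} K_M}{\dashint_{B'}K_M^{-1}}
\]
and using $K_M\in A_2$ together with Cauchy--Schwarz to obtain $(\dashint_{B'}K_M)(\dashint_{B'}K_M^{-1})\approx 1$ uniformly in $B'$, one gets $K_M(B')/K_M^{-1}(B')\approx (\dashint_{B'}K_M)^2$, hence $M_B(K_M^{-1},K_M)\approx (M_B(K_M))^2$ pointwise, so the two finiteness sets coincide and the proof concludes.
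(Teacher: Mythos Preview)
Your argument is correct and follows essentially the same route as the paper: verify that $(K_M^{-1},K_M)$ is an $n$-admissible pair via $K_M^{-1}\in A_{3/2}$ and Theorem~\ref{expbalance} with $t=3/2$, $s=2$, then invoke Theorem~\ref{thm:partial-reg} and compare the two maximal functions. The only cosmetic differences are that the paper obtains $K_M^{-1}\in A_{3/2}$ by first noting $K_M\in A_2\cap RH_2\Rightarrow K_M^2\in A_3$ and then dualizing, whereas you compute the $A_{3/2}$ expression directly; and the paper records only the one-sided bound $M_B(K_M^{-1},K_M)\le M_B(K_M)^2$ (from H\"older), which already suffices for the stated inclusion of continuity sets, while you establish the two-sided equivalence.
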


\begin{proof} 
Since we have the ellipticity condition \eqref{maxdist}, to apply Theorem~\ref{thm:partial-reg} we need to
show that $(K_M^{-1},K_M)$ is an $n$-admissible pair.  Since $K_M\in
A_2$, $K_M^{-1}\in A_2$, and so conditions (1) and (2) in
Definition~\ref{defn:padm} hold.   Since $K_M\in A_2\cap RH_2$, we have that
$K_M^2\in A_3$ (see~\cite[Theorem~2.2]{MR1308005}), which by the duality of $A_p$
weights implies that $K_M^{-1}\in A_{3/2}$.  We can therefore apply
Theorem~\ref{expbalance} with $t=3/2$ and $s=2$ to conclude that
$(K_M^{-1},K_M)$ satisfy the balance condition~\eqref{balance}.
Moreover, by H\"older's inequality, we have that for any ball
$B\subset \Omega$,
$$M_B(K_M^{-1},K_M)(x)\leq M_B(K_M)(x)^2, $$
so 
$$\{x\in B:M_B(K_M)(x)<\infty\}\subset \{x\in B:
M_B(K_M^{-1},K_M)(x)<\infty\}.$$ 
The desired conclusion now follows from Theorem~\ref{thm:partial-reg}.
\end{proof}

\begin{theorem} 
  Given an MFD $\vf \in W_{\loc}^{1,n-1}(\Omega,\R^n)$, suppose
  $W=G^{-n/2}\in \A_n$, and $K_I\in A_{n'}\cap RH_n$.  Then $\vf$ is
  continuous almost everywhere on $\Omega$:  given any ball
  $B\subset \Omega$, $\vf$ is continuous on the set
$$\{x\in B: M_B(K_I)(x)<\infty\}.$$
\end{theorem}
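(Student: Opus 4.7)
The proof proposal parallels that of Theorem~\ref{thm:KM-reg}, using the ellipticity inequality \eqref{innerdist} rather than \eqref{maxdist}. This suggests the weight pair $(w,v) = (K_I^{1-n}, K_I)$, and the plan is to show this pair is $n$-admissible, invoke Theorem~\ref{thm:partial-reg} (via the proof of Theorem~\ref{thm:MFD-pr}, noting as the authors do that the proof of Theorem~\ref{thm:partial-reg} uses only $n$-admissibility and the ellipticity condition~\eqref{ellipcond}), and finally translate the continuity set $F_B(K_I^{1-n},K_I)$ into one described by the unweighted maximal function of $K_I$.

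To check $n$-admissibility of $(K_I^{1-n},K_I)$: since $K_I\geq 1$ a.e., condition (1) of Definition~\ref{defn:padm} holds, and since $K_I\in A_{n'}$, $K_I$ is doubling, giving (3). For condition (2), $A_p$-duality asserts $u\in A_p$ iff $u^{1-p'}\in A_{p'}$; since $(1-n)(1-n')=1$, this reads $K_I^{1-n}\in A_n$ iff $K_I\in A_{n'}$, which is given. The main content is the balance condition (4), which I would verify via Theorem~\ref{expbalance}: with $p=n$ and $s=n$ (recall $K_I\in RH_n$), the prescribed exponent is $t=1+1/s'=(2n-1)/n$, so I must show $K_I^{1-n}\in A_{(2n-1)/n}$. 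For this I would invoke the Johnson--Neugebauer estimate (see~\cite{MR1308005}): if $u\in A_p\cap RH_q$ then $u^q\in A_{q(p-1)+1}$. Applied with $u=K_I$, $p=n'$, $q=n$, this yields $K_I^n\in A_{n(n'-1)+1}=A_{(2n-1)/(n-1)}$. Since $((2n-1)/(n-1))'=(2n-1)/n$, $A_p$-duality converts this into $K_I^{1-n}\in A_{(2n-1)/n}$, as required.

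With $(K_I^{1-n},K_I)$ an $n$-admissible pair, the two-weight Poincar\'e estimate from the discussion preceding Theorem~\ref{thm:MFD-pr}, applied with $(K_I^{1-n},K_I)$ in place of $(K_O^{-1},K_I)$ and using the left-hand inequality of \eqref{innerdist}, places each component $f_i\in \W_W^{1,n}(B)=\Sp_W^{1,n}(B)$ for every ball $B\Subset \Omega$ (the equality of spaces following from $W\in\A_n$ and Theorem~\ref{thm:bndry-smooth}). The components are then weak solutions of $\Lap_{W,n}u=0$, and Theorem~\ref{thm:partial-reg} gives continuity on $F_B(K_I^{1-n},K_I)=\{x\in B: M_B(K_I^{1-n},K_I)(x)<\infty\}$.

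To finish, I would show $\{x\in B: M_B(K_I)(x)<\infty\}\subset F_B(K_I^{1-n},K_I)$. The map $t\mapsto t^{1-n}$ is convex on $(0,\infty)$ (its second derivative is $n(n-1)t^{-n-1}>0$), so Jensen's inequality on any ball $B'\subset B$ gives
\[
\dashint_{B'} K_I^{1-n}\,dx \;\geq\; \Bigl(\dashint_{B'} K_I\,dx\Bigr)^{1-n},
\]
hence
\[
\frac{K_I(B')}{K_I^{1-n}(B')}=\frac{\dashint_{B'} K_I\,dx}{\dashint_{B'} K_I^{1-n}\,dx}\leq\Bigl(\dashint_{B'} K_I\,dx\Bigr)^{n}.
\]
Taking the supremum over $B'\subset B$ centered at $x$ yields $M_B(K_I^{1-n},K_I)(x)\leq M_B(K_I)(x)^n$ and the desired inclusion. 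The main obstacle is keeping the $A_p$ and $RH_s$ exponent arithmetic straight in the balance step; once Johnson--Neugebauer is combined with $A_p$-duality, the rest is direct verification plus a short Jensen computation.
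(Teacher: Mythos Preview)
Your proposal is correct and follows essentially the same route as the paper: both use the ellipticity bound \eqref{innerdist} to reduce to showing $(K_I^{1-n},K_I)$ is $n$-admissible, invoke the Johnson--Neugebauer result to get $K_I^n\in A_{n'+1}$ (your $A_{(2n-1)/(n-1)}$ is the same class), apply $A_p$-duality to obtain $K_I^{1-n}\in A_{1+1/n'}=A_{(2n-1)/n}$, and then feed $t=1+1/n'$, $s=n$ into Theorem~\ref{expbalance}. Your Jensen computation for $M_B(K_I^{1-n},K_I)\le M_B(K_I)^n$ is equivalent to the H\"older argument the paper alludes to; note also that your separate verification of $K_I^{1-n}\in A_n$ via duality is slightly redundant, since $A_{1+1/n'}\subset A_n$ already gives condition~(2).
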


\begin{proof} 
  We proceed as in the proof of Theorem~\ref{thm:KM-reg}:  given the
  ellipticity condition \eqref{innerdist}, it will suffice to show
  that 
  $(K_I^{1-n},K_I)$ is an $n$ admissible pair.  Since
  $K_I\in A_{n'}\cap RH_n$, we have that  $K_I^n \in A_{n'+1}$, 
  which in turn implies that 
$$K_I^{1-n}=(K_I^{n})^{-\frac{1}{n'}}\in A_{1+\frac{1}{n'}}.$$
Hence conditions (1) and (2) hold.  Moreover, if we take
$t=1+\frac{1}{n'}$ and $s=n$ in Theorem~\ref{expbalance}, we see that
the weights satisfy the balance condition.  Finally, 
$$M_B(K^{1-n}_I,K_I)(x)\leq M_B(K_I)(x)^n.$$
\end{proof}

\begin{theorem} 
  Given an MFD $\vf$, suppose $W=G^{-n/2}\in \A_n$, and
  $K_O^{n-1}\in A_{n}\cap RH_{n'}$.  Then $\vf$ is continuous almost
  everywhere on $\Omega$:   given any  ball $B\subset\Omega$, $\vf$ is
  continuous on the set
$$\{x\in B: M_{B,n-1}(K_O)(x)<\infty\},$$
where $M_{B,n-1}(K_I)=M_B(K_O^{n-1})^{1/(n-1)}$.
\end{theorem}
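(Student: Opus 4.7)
The plan is to mirror the proofs of Theorem~\ref{thm:KM-reg} and its $K_I$ analogue: use the ellipticity bound~\eqref{outerdist}, which identifies the natural scalar pair as $(w,v)=(K_O^{-1}, K_O^{n-1})$, and then invoke Theorem~\ref{thm:partial-reg}. This reduces matters to verifying (i) that $(K_O^{-1},K_O^{n-1})$ is an $n$-admissible pair on $B$ and (ii) that the stated set is contained in $F_B(K_O^{-1},K_O^{n-1})$.

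Conditions (1) and (3) of Definition~\ref{defn:padm} are immediate, since $K_O\geq 1$ forces $K_O^{-1}\leq K_O^{n-1}$, and $K_O^{n-1}\in A_n$ is automatically doubling. For (2), I would use the self-improvement result of~\cite{MR1308005} cited in the proof of Theorem~\ref{thm:KM-reg}: if $u\in A_p\cap RH_s$ then $u^s\in A_{s(p-1)+1}$. Applied with $u=K_O^{n-1}$, $p=n$, $s=n'$, this gives $K_O^n=(K_O^{n-1})^{n'}\in A_{n'(n-1)+1}=A_{n+1}$, and $A_p$-duality then produces $K_O^{-1}=(K_O^n)^{-1/n}\in A_{(n+1)/n}=A_{1+1/n}\subset A_n$. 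For the balance condition (4), I would apply Theorem~\ref{expbalance} with $t=1+1/n$ and $s=n'$: the compatibility hypothesis $t-n/n=1/n=1/s'$ is satisfied and both required weight memberships are already in hand.

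For the set inclusion, H\"older's inequality with conjugate exponents $n/(n-1)$ and $n$ gives, for every ball $B'\subset B$,
\[ |B'|^n = \left(\int_{B'} K_O^{-(n-1)/n}K_O^{(n-1)/n}\,dy\right)^n \leq \left(\int_{B'} K_O^{-1}\,dy\right)^{n-1}\int_{B'}K_O^{n-1}\,dy, \]
which rearranges to
\[ \frac{K_O^{n-1}(B')}{K_O^{-1}(B')} \leq \left(\avgint_{B'}K_O^{n-1}\,dy\right)^{n/(n-1)}. \]
Taking the supremum over $B'\ni x$ yields $M_B(K_O^{-1},K_O^{n-1})(x)\leq M_{B,n-1}(K_O)(x)^n$, so finiteness of $M_{B,n-1}(K_O)(x)$ forces $x\in F_B(K_O^{-1},K_O^{n-1})$. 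Theorem~\ref{thm:partial-reg} then delivers continuity of each component function of $\vf$ (which lies in $\Sp_W^{1,n}(B)$ by the discussion preceding Theorem~\ref{thm:MFD-pr} together with Theorem~\ref{thm:bndry-smooth}) at $x$.

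The only subtle point is the bookkeeping: the asymmetric ellipticity bound~\eqref{outerdist} forces the single hypothesis $K_O^{n-1}\in A_n\cap RH_{n'}$ to supply two separate pieces, namely an $A_n$ bound on $K_O^{-1}$ and the balance relationship, and the critical H\"older split $\frac{n-1}{n}+\frac{1}{n}=1$ is the unique one that lets $|B'|$ factor through the product $K_O^{-1}(B')^{n-1}\,K_O^{n-1}(B')$. Once these two choices are in place the rest is essentially routine.
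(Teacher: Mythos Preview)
Your approach matches the paper's: the ellipticity bound~\eqref{outerdist}, the factorization/duality argument giving $K_O^{-1}\in A_{1+1/n}$, Theorem~\ref{expbalance} with $t=1+\tfrac1n$ and $s=n'$, and the H\"older comparison of the two maximal functions. (Your exponent in $M_B(K_O^{-1},K_O^{n-1})\le M_{B,n-1}(K_O)^n=M_B(K_O^{n-1})^{n/(n-1)}$ is in fact the correct one; the paper records the exponent $1/(n-1)$, which fails already for constant $K_O>1$.)

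There is, however, one point you pass over. Unlike the two preceding theorems, the statement here does \emph{not} assume $\vf\in \W^{1,n-1}_{\loc}(\Omega,\R^n)$, and that regularity is precisely what is needed so that the component functions $f_i$ are weak solutions of $\Lap_{W,n}u=0$. The paper supplies this from the hypothesis $K_O^{n-1}\in A_n$ (hence $K_O\in L^{n-1}_{\loc}$): writing
\[
|D\vf|_{\op}^{n-1}=|D\vf\,W^{1/n}W^{-1/n}|_{\op}^{n-1}\le |D\vf\,W^{1/n}|_{\op}^{n-1}\,K_O^{(n-1)/n},
\]
applying H\"older with exponents $n'$ and $n$, and using the Frobenius-norm identity $|D\vf\,W^{1/n}|_{\op}^{n}\lesssim J_\vf\in L^1_{\loc}$ gives $|D\vf|_{\op}\in L^{n-1}_{\loc}$. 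Your sketch appeals to ``the discussion preceding Theorem~\ref{thm:MFD-pr}'', but that discussion takes $\vf\in \W^{1,n-1}_{\loc}$ as an input, so you should add this computation to close the argument.
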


\begin{proof} 
  First, by our assumption $K_O\in L^{n-1}_{\loc}(\Omega)$, we
  do not need to assume {\it a priori} that
  $\vf \in \W^{1,n-1}_\loc(\Omega,\R^n)$.  Indeed, since
  $J_\vf(x)$ is locally integrable, if $B\subset\Omega$, then
\begin{align*}
\int_B |D \vf|_\op^{n-1}\,dx
&=\int_B |D\vf \, W^{1/n}W^{-1/n}|_\op^{n-1}\,dx \\
&\leq \int_B |D\vf \, W^{1/n}|^{n-1}_\op |W^{-1/n}|_\op^{n-1}\,dx \\
&=\int_B |D\vf \, W^{1/n}|^{n-1}_\op K_O^{1/n'}\,dx\\
&\leq\Big(\int_B |D\vf \, W^{1/n}|^n_\op\,dx\Big)^{1/n'}\Big(\int_B K_O^{n-1}\,dx\Big)^{1/n}\\
&\lesssim\Big(\int_B J_\vf(x)\,dx\Big)^{1/n'}\Big(\int_B
  K_O^{n-1}\,dx\Big)^{1/n}<\infty. 
\end{align*}
For the last inequality we use the Frobenius norm,
$|A|_F=\sqrt{\tr(A^\T A)} \approx |A|_\op$, to get
$$|D\vf \, W^{1/n}|^n_\op\leq |D\vf G^{-1/2}|_F^n= \tr[(D\vf G^{-1/2})^\T (D\vf G^{-1/2})]^{n/2}=n^{n/2}J_\vf(x).$$

We can now argue again as in the proof of Theorem~\ref{thm:KM-reg}
using the ellipticity condition~\eqref{outerdist}.
Since $K_O^{n-1}\in A_n\cap RH_{n'}$ implies that $K^n_O\in A_{n+1}$,
by duality we have that $K_O^{-1}\in A_{1+1/n}$.  This gives
conditions (1) and (2).  If we take $t=1+\frac1n$ and $s=n'$ in
Theorem~\ref{expbalance}, then $(K_O^{-1},K_O^{n-1})$ satisfies the balance
condition.  Finally, again by H\"older's inequality,
$$M_B(K_O^{-1},K_O^{n-1})(x)\leq M_B(K_O^{n-1})(x)^{1/(n-1)}.  $$
\end{proof}

\medskip

We conclude this section with an example to show that our results are
sharp.  Our example is adapted from an example due to
Ball~\cite[Example~6.1]{MR0475169}.  As in all problems involving the
matrix $\A_p$ weights, the difficulty is in showing that the matrix is
in this class.  However, when $n=2$, we can use a result due to Lauzon
and Treil~\cite{MR2354705} to simplify the computations.

\begin{example} \label{example:optimal}
Fix $n=2$ and let $\Omega = B(0,1)$.  Define 
\[ \vf(x) =  (|x|^{-1}+|x|^{-1/2})x,  \quad x \neq 0, \]
and let $\vf(0)=0$.  Then $\vf$ maps $B(0,1)$ to the annulus
$B(0,2)\setminus B(0,1)$; clearly no choice of value for $\vf(0)$ will
make $\vf$ continuous there.  

We will show that $\vf$ satisfies the hypotheses and conclusions of
Corollary~\ref{cor:MFD-wv-cond}.  As shown in~\cite{MR0475169},
$f\in W^{1,1}_\loc(\Omega)$, and if we let $x=(x_1,x_2)$, $r=|x|$ and
$R=1+|x|^{1/2}$, then
\[ D\vf (x) = 
\begin{pmatrix}
\frac{R}{r}+ \frac{(rR'-R)x_1^2}{r^3} & \frac{(rR'-R)x_1x_2}{r^3} \\
\frac{(rR'-R)x_1x_2}{r^3}  & \frac{R}{r}+ \frac{(rR'-R)x_2^2}{r^3}\\
\end{pmatrix}
\]
and
\[ J_\vf (x) = \det D\vf(x) = \frac{RR'}{r}. \]
The eigenvalues of this matrix are (via Mathematica)
\begin{gather*}
 \mu_1 = \frac{r^2 R+r R' x_1^2+r R' x_2^2-R x_1^2-R x_2^2}{r^3} 
= R'=\frac{1}{2|x|^{1/2}}, \\
\mu_2 = \frac{R}{r} = \frac{1}{|x|}+\frac{1}{|x|^{1/2}}. 
\end{gather*}
Therefore (since $n=2$) we have that
\[ K_O(x)=K_I(x) = \frac{\mu_2}{\mu_1} = 2+\frac{2}{|x|^{1/2}}, \]
and so 
\[ K_I(x)  \approx |x|^{-1/2},  \qquad
 K_O(x)^{-1} \approx |x|^{1/2}.\]
Thus $K_O^{-1} \in A_t$ for $t>5/4$, and $K_I\in RH_s$ for $s<4$.
Therefore, we can take $t=3/2$ and $s=2$ and we satisfy the condition
$t-1=1/s'$.  

Therefore, it remains to show that $W=G^{-1} \in \A_2$.
In~\cite[Theorem~3.1]{MR2354705} they showed that this is the case if 
$\langle W(x)\bv,\bv\rangle$ and $\langle W^{-1}(x)\bv,\bv\rangle$ are
uniformly in scalar $A_2$ for all unit vectors $\bv\in \R^2$.   We
first consider $W^{-1}(x)= G(x)= J_\vf(x)^{-1} D\vf(x) D\vf(x)$.  This
matrix has eigenvalues
\[ \lambda_1 = J_\vf^{-1} \mu_1^2 =  \frac{|x|^{1/2}}{2(1+|x|^{1/2})}
\approx |x|^{1/2}, 
\qquad
\lambda_2 = J_\vf^{-1} \mu_2^2 = \frac {2(1+|x|^{1/2})}{|x|^{1/2}} \approx
|x|^{-1/2}.  \]
Therefore, $\lambda_1,\,\lambda_2 \in A_2$.  Given any unit vector
$\bv$, we can write it as $\alpha_1 \bxi_1+\alpha_2\bxi_2$, where
$\bxi_1,\,\bxi_2$ are an orthonormal basis of eigenvectors of
$W^{-1}$.  Hence,
\begin{equation} \label{eqn:weight}
\langle W^{-1}(x)\bv,\bv\rangle =
\alpha_1^2\lambda_1(x)+\alpha_2^2\lambda_2(x).  
\end{equation}
The linear combination of  two scalar $A_2$ weights is again an
$A_2$ weight, and its $A_2$
characteristic is dominated by
$\alpha_1^2[\lambda_1]_{A_2}+\alpha_2^2[\lambda_2]_{A_2}$.
(See~\cite[p.~292]{MR2463316}.)  Since $|\bv|=1$,  we get
that~\eqref{eqn:weight} is uniformly in $A_2$.  The argument for $W$
is exactly the same, using the fact that its eigenvalues are
$\lambda_1^{-1}$ and $\lambda_2^{-2}$, and these are again in $A_2$.
This completes our proof.
\end{example}

\bibliographystyle{plain}
\bibliography{H=W}

\end{document}